\newtheorem{theorem}{Theorem}[section]
\newtheorem{proposition}[theorem]{Proposition}
\newtheorem{corollary}[theorem]{Corollary}
\newtheorem{lemma}[theorem]{Lemma}
\theoremstyle{definition}
\theoremstyle{remark}
\newtheorem{remark}{Remark}[section]
\numberwithin{equation}{section}
\DeclareMathOperator{\sgn}{sgn}
\newcommand{\ud}{\,\mathrm{d}}
\newcommand{\eps}{\varepsilon}
\newcommand{\R}{\mathbb R}
\newcommand{\E}{{\mathcal E}}
\newcommand{\D}{\mathcal D}
\renewcommand{\H}{{\mathcal H}}
\newcommand{\HH}{{\mathring{H}^{1/2}(\R^2)}}
\newcommand{\HHH}{{\mathring{H}^{-1/2}(\R^2)}}
\newcommand{\oH}{\mathring{H}}
\newcommand{\br}{{\bar\rho}}
\newcommand{\loc}{\mathrm{loc}}
\begin{document}

\title[Thomas-Fermi theory of charge screening in
graphene]{Thomas-Fermi theory of out-of-plane charge screening in
  graphene}

\author{Vitaly Moroz}
\address{Swansea University, Department of
    Mathematics, Fabian Way, Swansea SA1~8EN, Wales, UK}
  \email{v.moroz@swansea.ac.uk}
  
\author{Cyrill B.~Muratov}
\address{Dipartimento di Matematica, Universit\`a di Pisa, Largo
    B. Pontecorvo, 5, 56127 Pisa, Italy}
\address{Department of Mathematical Sciences, New Jersey Institute
    of Technology, University Heights, Newark, NJ~07102, USA}
\email{cyrill.muratov@unipi.it}

\date{\today}

\begin{abstract}
  This paper provides a variational treatment of the effect of
  external charges on the free charges in an infinite free-standing
  graphene sheet within the Thomas-Fermi theory. We establish
  existence, uniqueness and regularity of the energy minimizers
  corresponding to the free charge densities that screen the effect of
  an external electrostatic potential at the neutrality point. For the
  potential due to one or several off-layer point charges, we also
  prove positivity and a precise universal asymptotic decay rate for
  the screening charge density, as well as an exact charge
  cancellation by the graphene sheet. We also treat a simpler case of
  the non-zero background charge density and establish similar results
  in that case.
\end{abstract}


\maketitle
\tableofcontents

\section{Introduction}\label{sec:introduction}
\settocdepth{section}

Graphene is a classical example of a two-dimensional material whose
electronic properties give rise to a number of unusual characteristics
that make it a prime target for both fundamental research and multiple
applications
\cite{geim07,novoselov12,katsnelson,editorial14,bhimanapati15}. A key
feature of the electrons in single layer graphene sheets is the
presence of the Dirac cone in their dispersion relation that makes the
elementary excitations (electrons and holes) of the ground state
behave as massless relativistic fermions
\cite{KotovUchoa:12,castroneto09}. This presents challenges in the
theoretical treatment of those excitations, as their kinetic energy,
which is on the order of $E_K \sim \hbar v_F / r$, where
$v_F \simeq 1 \times 10^8$ cm/s is the Fermi velocity and $r$ is the
radius of the wave packet containing a single charge, remains
comparable to the Coulombic interaction energy
$E_C \sim e^2 / (\epsilon_d r)$ of two charges at distance $r$
independently of the scale $r$ (here $e$ is the elementary charge, in
the CGS units, $\epsilon_d \sim 1$ is the effective dielectric
constant, and it is noted that $e^2 / (\hbar v_F) \simeq 2.2$). As a
result, many-body effects need to be taken into consideration in the
studies of electronic properties of graphene. In particular, these
effects are significant in determining the way the massless
ultrarelativistic fermions screen the electric field of supercritical
charged impurities \cite{KotovUchoa:12}.

The problem of characterizing the charged impurity screening by the
graphene sheet has been studied, using a number of theoretical
approaches \cite{divincenzo84,shung86,shytov07,Katsnelson:06,
  HwangDasSarma:07,HainzlLewinSparber:2012,LMM} (this list is not
intended to be exhaustive). Note that a similar question arises in the
studies of the graphene based devices in the proximity of a conducting
electrode, or when a scanning tunneling microscope tip approaches a
graphene sheet \cite{alyobi20}. In particular, in this situation the
electric charge the layer is exposed to may exceed the elementary
charge $e$ by many orders of magnitude. Under such conditions, a fully
nonlinear treatment of the screening problem is, therefore, necessary.

In conventional quantum systems, a good starting point for the
analysis of electric field screening is the Thomas-Fermi theory, as it
yields an asymptotically exact response of a system of interacting
electrons to a large external charge \cite{Lieb:81}. Such a theory for
massless relativistic fermions was developed by Di Vincenzo and Mele
in the context of charged impurity screening in graphite intercalated
compounds \cite{divincenzo84}. They conducted numerical studies of the
resulting equations for the screening charge density and noted a
highly non-local character of the response. More recently, Katsnelson
carried out a formal analysis of the asymptotic behavior of the
screening charge density away from a single impurity ion in a graphene
monolayer \cite{Katsnelson:06}. His results were further clarified an
extended by Fogler, Novikov and Shklovskii, who also confirmed the
predictions about the decay of the screening charge density by
numerical simulations \cite{fogler07}. The nonlocal character of the
response and its dependence on the level of doping have been confirmed
by the direct experimental observations of the screening charge
density \cite{wang12,wang13,wong17}. Note that these observations are
at variance with the prediction of a purely local dielectric response
at the Dirac point from the linear response theory for massless
relativistic fermions within the random phase approximation
\cite{HwangDasSarma:07}.

This paper is a mathematical counterpart of the studies in
\cite{divincenzo84, Katsnelson:06, fogler07} that provides a suitable
variational framework for the study of the charge screening problem
described by the Thomas-Fermi theory of graphene (for a closely
related Thomas-Fermi-von Weizs\"acker model and some further
discussion, see \cite{LMM}). The setting turns out to be rather
delicate, as the presence of a bare Coulombic potential from an
impurity leads to heavy tails in the potential term that are precisely
balanced with the Coulombic interaction term. Within our setting, we
prove existence, uniqueness, radial symmetry and monotonicity of the
minimizer of the graphene Thomas-Fermi energy for an off-layer
external point charge in a free-standing graphene sheet. More
generally, we provide existence, uniqueness, the Euler-Lagrange
equation that is understood in a suitable sense, and regularity of the
minimizer for a general class of external potentials arising as
Coulombic potentials of appropriate collections of external
charges. Back to a single off-layer charge in a free-standing graphene
sheet, we establish the precise asymptotic decay of the screening
charge density at infinity, which agrees with the one obtained by
Katsnelson using formal arguments.

The decay of the screening charge density turns out to be a borderline
power law decay modulated by a logarithmic factor that makes it barely
integrable. The latter presents a significant technical difficulty in
the handling of the appropriate barrier functions that control the
decay of the solution at infinity. In particular, we prove that the
decay indeed tuns out to be universal, independently of the strength
of the external charge and remains the same for a finite collection of
charges of the same sign.

As a by-product of our analysis, we also demonstrate the existence of
sign-changing minimizers in the case of positive fast decaying
potentials for the closely related Thomas-Fermi-von Weizs\"acker model
studied in \cite{LMM} in the regime when the latter is well
approximated by the Thomas-Fermi model.  This gives a partial answer
to the question raised in \cite{LMM}.  Finally, we present the
corresponding results for the biased layer.  The treatment of the
latter is significantly simpler due to the expected fast power law
decay of the screening charge density.

Our paper is organized as follows. In section \ref{sec:results}, we
introduce the Thomas-Fermi energy functional for a free-standing
graphene sheet and then discuss several issues associated with its
definition in the context of the associated variational problem for
charge screening that require a modified formulation compared to the
classical Thomas-Fermi theory. Within these modifications, we then
state the main results of our paper in Theorems \ref{thm:Egeneral} and
\ref{thm-main}, and Corollary \ref{cor:cluster}.  In section
\ref{sec:setting0}, we give the precise variational setting for the
modified Thomas-Fermi energy of the free-standing graphene sheet and
establish general existence and regularity results for the minimizers.

Then, in section \ref{s:pos} we focus on the case of the potential
from a single off-layer external point charge. In particular, in
section \ref{sec:half} we reformulate the Euler-Lagrange equation for
the minimizers in terms of a convenient auxiliary variable and
establish several properties of the solutions associated with a
comparison principle that we establish for this equation, and in
section \ref{sec:super} we establish further implications of the
comparison principle on the positivity of solutions. This leads us, in
section \ref{sec:sign}, to establish existence of sign-changing
solutions to the closely related Thomas-Fermi-von Weizs\"acker model
considered by us in \cite{LMM}.

The key computation of the paper is carried out in section \ref{ALog},
where a logarithmic barrier is established, which is then used in
section \ref{sec:decay} to prove the asymptotic decay rate of the
solution at infinity for the external potential of a point
charge. Furthermore, in section \ref{sec:charge} we show the complete
charge screening and in section \ref{sec:uni} we establish the
universality of the decay. We conclude this section by showing how the
statements of our main results in section \ref{sec:results} follow
from the various technical results obtained in sections
\ref{sec:setting0} and \ref{s:pos}.

Finally, in section \ref{sec:setting} we outline the analogous
treatment of the case of a doped graphene sheet characterized by the
presence of a uniform background charge, where the main results are
contained in Theorems \ref{p-Min} and \ref{p-Minreg}.

\paragraph{\bf Notations}
Throughout the paper, for $f(t), g(t) \geq 0$ we use the asymptotic
notations as $t \to +\infty$:

\begin{itemize}
\item $f(t)\lesssim g(t)$ if there exists $C>0$ independent of $t$
  such that $f(t) \le C g(t)$ for all $t$ sufficiently large;
  \smallskip
	
\item $f(t)\sim g(t)$ if $f(t)\lesssim g(t)$ and $g(t)\lesssim f(t)$;
  \smallskip
	
\item $f(t)\simeq g(t)$ if $f(t) \sim g(t)$ and
  $\displaystyle \lim_{t\to +\infty}\frac{f(t)}{g(t)}=1$.
\end{itemize}


As usual, $B_R(x):=\{y\in\R^N:|y-x|<R\}$, $B_R:=B_R(0)$, and $C,c,c_1$
etc., denote generic positive constants.  By $C^{\alpha}(\R^2)$ we
denote the space of all locally H\"older continuous functions of order
$\alpha\in(0,1]$ on $\R^2$, and $C^{k,\alpha}(\R^2)$ denotes higher
order H\"older spaces for $k=1,2,\dots$. For an open set
$\Omega\subseteq\R^2$, by $C^\infty_c(\Omega)$ we denote the space of
all compactly supported infinitely differentiable function with the
support in $\Omega$, while $\mathcal D'(\Omega)$ is the space of
distributions on $\Omega$, i.e. the dual space of
$C^\infty_c(\Omega)$.  For a function $f\in L^1_\loc(\Omega)$, unless
specified otherwise, the inequality $f\ge 0$ in $\Omega$ is always
understood in the distributional sense, i.e., that
$\int_{\R^2}f(x)\varphi(x) dx \ge 0$ for all
$0\le \varphi\in C^\infty_c(\Omega)$. We similarly define $f \le
0$. When we want to emphasize a {\em pointwise} (in)equality, we
always write explicitly $f(x)$.

\paragraph{\bf Acknowledgements} The work of CBM was supported, in
part, by NSF via grants DMS-1614948 and DMS-1908709. This study also
received funding from the European Union -- Next Generation EU -- PRIN
2022 PNRR Project P2022WJW9H. CBM is a member of INdAM/GNAMPA. The
authors are grateful to the anonymous referee for their suggestion to
consider multipoint configurations of charges in Corollary
\ref{cor:cluster}.

\bigskip
\bigskip

\section{Model and main results} \label{sec:results}

Thomas-Fermi (TF) energy for massless relativistic fermions in a
  free-standing graphene layer in the presence of the external
electrostatic potential $V$ takes the following form, after a suitable
non-dimensionalization \cite{Katsnelson:06}:
\begin{equation}
  \label{eq:E0TF}
  \E^{TF}_{0}(\rho)=\frac{2}{3}\!\int_{\R^2}|\rho|^{3/2}\ud^2
  x-\int_{\R^2}\rho(x) V(x) \ud^2
  x+\frac{1}{4 \pi}\iint_{\R^2\times\R^2}\frac{\rho(x) 
    \rho(y)}{|x-y|}\ud^2 x\ud^2 y.
\end{equation}
Here $\rho:\R^2\to\R$ is the {\em charge density} of charge carrying
fermionic quasiparticles (electrons and holes).  The density $\rho$ is
a sign--changing function with $\rho>0$ corresponding to electrons and
$\rho<0$ to holes.  The first, {\em Thomas--Fermi term}, is an
approximation of the kinetic energy of the uniform gas of
noninteracting particles.  The exponent $3/2$ can be deduced from
scaling considerations.
The last, nonlocal {\em Coulomb term}
\begin{equation}
  \D(\rho,\rho):=\frac{1}{4 \pi}\iint_{\R^2\times\R^2}\frac{\rho(x)
    \rho(y)}{|x-y|}\ud^2 x\ud^2 y, 
\end{equation}
is the like-charged inter-particle repulsion energy which is inherited
from $\R^3$.  The middle term is the {\em potential energy} due to the
interaction with the {\em external potential} $V:\R^2\to\R$.  In the
case of a {\em single} external point charge of magnitude $Z\in\R$
located in $\R^3$ at distance $d\ge 0$ away from the graphene layer
the external potential is
\begin{equation}
  \label{VZd}
  V_{Z,d}(x):=\frac{Z}{2 \pi \sqrt{d^2+|x|^2}},
\end{equation}
but more general potentials $V(x)$ could be considered, e.g., those
involving multiple point charge configurations:
  \begin{equation}
    \label{eq:VN}
    V_N(x) := \sum_{i=1}^N V_{Z_i,d_i}(x - x_i),
  \end{equation}
  for some $Z_i \in \R$, $d_i \geq 0$ and $x_i \in \R^2$.
  Importantly, for an unscreened system of uncompensated external
  charges (i.e., when $\sum_i Z_i \not= 0$ in \eqref{eq:VN}) one has
  $V_{N}(x)\sim 1/|x|$ as $|x|\to\infty$, since the
  quasiparticle--charge interaction is according to Coulomb's law in
  $\R^3$. For a more detailed discussion of various terms in the
  energy and the non-dimensionalization, see \cite[Section
  2]{LMM}. Notice that the energy in \eqref{eq:E0TF} is invariant
    with respect to the transformation
  \begin{align}
    \label{eq:plusminus}
    \rho \to -\rho, \qquad V \to -V,
  \end{align}
  hence when dealing with the potential $V_{Z,d}$ it is sufficient to
  restrict attention to the case $Z > 0$.

Our principal goal is to prove the existence of global minimizers of
$\E^{TF}_{0}$ and establish their fundamental properties, such as
regularity and decay estimates. At first glance the
Thomas--Fermi energy $\E^{TF}_{0}$ looks similar to its classical
three-dimensional (3D) atomic counterpart
\citelist{\cite{Lieb-Simon-77}\cite{Lieb:81}\cite{Brezis-Benilan}}.
However, there are fundamental differences within the variational
framework for graphene modelling:

\begin{itemize}
\item Unlike in the classical TF-theory for atoms and molecules where
  $\rho\ge 0$, the density $\rho$ in graphene is a sign--changing
  function. As a consequence, $\D(|\rho|,|\rho|)\ge \D(\rho,\rho)$
  which means that oscillating profiles could be energetically more
  favorable.

\item All three terms in $\E^{TF}_{0}$ with $V=V_{Z,0}$ scale at
  the same rate under the charge--preserving rescaling
  $\rho_\lambda(x)=\lambda^2 \rho(\lambda x)$.  Hence
  $\E^{TF}_{0} (\rho_\lambda)=c\lambda$ when $d=0$ for some $c \in \R$.
  Physically, this is a manifestation of the non-perturbative role of
  the Coulomb interaction in graphene.  Mathematically, this reveals
  the critical tuning of the three different terms in the energy.
\item The nonlocal term $\D(\rho,\rho)$ is formally identical to the
  usual Coulomb term in $\R^3$. However, the integral kernel
  $|x-y|^{-1}$ in $\R^2$ is associated with the Green function of the
  fractional Laplacian operator $(-\Delta)^{1/2}$. As a consequence,
  the Euler--Lagrange equation for $\E^{TF}_{0}$ transforms into
  a fractional semilinear partial differential equation (PDE)
  involving $(-\Delta)^{1/2}$, instead of the usual Laplace operator
  $-\Delta$ of the classical 3D TF-theory.
\end{itemize}

\noindent
Note that the total number of electrons and holes in the graphene
sheet is neither fixed nor bounded a priori. As a consequence, unlike
in the atomic and molecular 3D models, it is unclear if the minimizers
of $\E^{TF}_{0}$ should have a finite total charge, i.e. if they
are $L^1$--functions. This implies that regular distributions should
be included as admissible densities. Indeed, even if the density
$\rho$ is a sign--changing continuous function, it is not a priori
clear if $\rho$ can be interpreted as a charge density in the sense of
potential theory (i.e., whether $d \mu = \rho\, dx$ can be associated
to a signed measure $\mu$ on $\R^2$, making the Coulomb energy
$\D(\rho,\rho)$ meaningful in the sense of the Lebesgue integration,
see \cite{LMM}*{Example 4.1} and further references therein. This
makes the analysis of the minimizers of $\E^{TF}_{0}$
mathematically challenging.

We avoid these issues by identifying the Coulomb term $\D(\rho,\rho)$
with one-half of the square of the $\mathring{H}^{-1/2}(\R^2)$ norm of
$\rho$. The energy we consider is then
\begin{equation}
\begin{aligned}
  \label{E}
  \E_0(\rho) & := \frac{2}{3}\!\int_{\R^2}|\rho|^{3/2}\ud^2 x -
  \langle \rho, V \rangle + \frac{1}{2} \left\|\rho \right\|_{\HHH}^2,
\end{aligned}
\end{equation}
where $\langle \cdot , \cdot \rangle$ is a duality pairing
between the function $V \in L^1_\mathrm{loc}(\R^2)$ and the linear
  functional generated by $\rho$, to be specified shortly.  Sometimes
we also write $\E_0^V$ to emphasize the dependence on $V$.  It is easy
to see that the definition of $\E_0$ in \eqref{E} agrees with that of
$\E^{TF}_{0}$ when $\rho\in C^\infty_c(\R^2)$ and
  $\langle \rho, V \rangle = \int_{\R^2} V \rho \ud^2 x$.

The natural domain of definition of $\E_0$ is the class
\begin{align}
  \label{eq:Am}
  \H_0:= \HHH\cap L^{3/2}(\R^2).
\end{align}
Clearly, $\H_0$ is a Banach space with the norm
$\|\cdot\|_{\H_0}=\|\cdot\|_{L^{3/2}(\R^2)}+\|\cdot\|_\HHH$.  Its dual
space $\H_0^\prime$ can be identified with the Banach space
$\HH+L^3(\R^2)$.\footnote{Recall that
  $$\HH+L^3(\R^2)=\{f\in
  L^1_{\loc}(\R^2):f=f_1+f_2,\;f_1\in \HH, f_2\in L^3(\R^2)\}$$ is a
  Banach space with the norm
  $\|f\|_{\HH+L^3(\R^2)} := \inf(\|f_1\|_{\HH}+\|f_2\|_{L^3(\R^2)})$,
  where the infimum is taken over all admissible pairs $(f_1,f_2)$.}
Therefore, one may define $\langle \cdot, \cdot \rangle$ as the
duality pairing between $\H_0'$ and $\H_0$. More precisely, for every
$\rho \in \H_0$ and every $V = V_1 + V_2$, where $V_1 \in \HH$ and
$V_2 \in L^3(\R^2)$ we may define
\begin{align}
  \label{eq:duality}
  \langle \rho, V \rangle := {}_{\HHH} \langle \rho, V_1 \rangle_\HH
  + \int_{\R^2} \rho(x) V_2(x) \ud^2 x.
\end{align}
See Section \ref{sec:setting0} for further details and precise
definitions. 

Our first result establishes the existence of a unique minimizer for
$\E_0$.

\begin{theorem}\label{thm:Egeneral}
  For every $V \in\HH + L^3(\R^2)$ there exists a unique
  minimizer $\rho_V \in \H_0$ such that
  $\E_0(\rho_V) = \inf_{\rho \in \H_0} \E_0(\rho)$.  The minimizer
  $\rho_V$ satisfies the Euler--Lagrange equation
\begin{align}\label{e-EL-int}
  \int_{\R^2} \sgn(\rho_V)|\rho_V|^{1/2}\varphi\ud^2 x - \langle
  \varphi, V \rangle  + \langle\rho_V,
  \varphi\rangle_\HHH=0,\quad\forall\varphi\in \H_0. 
	\end{align}
Furthermore, 
\begin{itemize}
\item[$i)$]
if $(-\Delta)^{1/2}V\ge 0$ then $\rho_V\ge 0$,
\smallskip
\item[$ii)$]
if $V\in \HH \cap C^{\alpha}(\R^2)$ for some $\alpha\in(0,1]$ then $\rho_V\in \H_0\cap C^{\alpha}(\R^2)$ and
$\rho_V(x) \to 0$ as $|x| \to \infty$.
\end{itemize}
\end{theorem}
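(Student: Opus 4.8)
The plan is to prove the three assertions of Theorem~\ref{thm:Egeneral} in the natural order: first existence and uniqueness by the direct method, then the Euler--Lagrange equation by taking variations, and finally the sign property by a comparison/truncation argument exploiting the convexity structure.

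\textbf{Existence and uniqueness.} I would first check that $\E_0$ is well defined and finite on all of $\H_0$: the Thomas--Fermi term is finite for $\rho\in L^{3/2}(\R^2)$, the Coulomb term $\tfrac12\|\rho\|_{\HHH}^2$ is finite by definition of $\H_0$, and the linear term $\langle\rho,V\rangle$ is controlled via the duality pairing $|\langle\rho,V\rangle|\le \|\rho\|_{\H_0}\|V\|_{\H_0'}$ with $V\in\HH+L^3(\R^2)=\H_0'$. Coercivity: using Young's inequality to absorb the linear term, $\E_0(\rho)\ge \tfrac13\|\rho\|_{L^{3/2}}^2 + \tfrac14\|\rho\|_{\HHH}^2 - C(\|V\|_{\H_0'})$ for $\|\rho\|_{\H_0}$ large (one should be slightly careful here since the $L^{3/2}$ term grows like $\|\rho\|_{L^{3/2}}^{3/2}$, faster than the linear $\|\rho\|_{L^{3/2}}$ from the part of $V$ in $L^3$, while the $\HHH$ term grows quadratically, dominating the $\HH$-part of $V$), so sublevel sets are bounded in the reflexive Banach space $\H_0$. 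Taking a minimizing sequence $\rho_n$, extract a weakly convergent subsequence $\rho_n\wconv\rho_V$ in $\H_0$. Then $\rho\mapsto\tfrac12\|\rho\|_{\HHH}^2$ is convex and strongly continuous, hence weakly lower semicontinuous; $\rho\mapsto\tfrac23\int|\rho|^{3/2}$ is convex and strongly lower semicontinuous on $L^{3/2}$, hence weakly lower semicontinuous; and $\langle\rho_n,V\rangle\to\langle\rho_V,V\rangle$ by weak convergence paired with the fixed element $V\in\H_0'$. Therefore $\E_0(\rho_V)\le\liminf\E_0(\rho_n)=\inf\E_0$, giving a minimizer. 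Uniqueness follows from strict convexity: $t\mapsto \tfrac23|t|^{3/2}$ is strictly convex, so the Thomas--Fermi term is strictly convex in $\rho$, while the remaining terms are convex; hence $\E_0$ is strictly convex and the minimizer is unique.

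\textbf{Euler--Lagrange equation.} For fixed $\varphi\in\H_0$ and $t\in\R$, consider $g(t):=\E_0(\rho_V+t\varphi)$. The quadratic and linear terms are smooth in $t$. The only delicate point is differentiability of $t\mapsto\tfrac23\int|\rho_V+t\varphi|^{3/2}$ at $t=0$; since $s\mapsto\tfrac23|s|^{3/2}$ is $C^1$ with derivative $\sgn(s)|s|^{1/2}$, and $\sgn(\rho_V)|\rho_V|^{1/2}\in L^3(\R^2)$ (dual to $L^{3/2}$) with $\varphi\in L^{3/2}$, a dominated-convergence argument using the mean value theorem and the pointwise bound $\big||\rho_V+\theta\varphi|^{1/2}\big|\lesssim |\rho_V|^{1/2}+|\varphi|^{1/2}$ gives $g'(0)=\int_{\R^2}\sgn(\rho_V)|\rho_V|^{1/2}\varphi\ud^2 x - \langle\varphi,V\rangle + \langle\rho_V,\varphi\rangle_{\HHH}$. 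Since $\rho_V$ is a minimizer, $g'(0)=0$ for every $\varphi\in\H_0$, which is exactly \eqref{e-EL-int}.

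\textbf{The sign property.} Assume $(-\Delta)^{1/2}V\ge 0$, i.e. $\langle\varphi,V\rangle_{\text{suitably}}\ge 0$ for appropriate nonnegative test objects --- more precisely, $\langle\varphi,V\rangle\ge 0$ whenever $\varphi\ge0$ lies in the right space, since $V$ is then (formally) the $\HHH$-Riesz potential of the nonnegative distribution $(-\Delta)^{1/2}V$. The plan is to test the inequality against $\varphi=\rho_V^-:=\max\{-\rho_V,0\}\ge0$, which belongs to $\H_0$ because the negative part operation is bounded on $L^{3/2}$ and non-expansive in the $\HHH$-seminorm (this last fact, that $\|\rho^-\|_{\HHH}\le\|\rho\|_{\HHH}$ and more generally that truncation does not increase the $\oH^{-1/2}$ energy, is the analogue of the standard fact for $H^1$ and should be recorded as a lemma; for $\oH^{-1/2}$ it can be obtained via the Caffarelli--Silvestre extension, where $\rho^\pm$ correspond to truncating the harmonic extension's normal derivative, or directly from the quadratic-form representation $\|\rho\|_{\HHH}^2=c\iint\frac{(u(x)-u(y))^2}{|x-y|^{?}}$ — care is needed with the precise bilinear inequality $\langle\rho^+,\rho^-\rangle_{\HHH}\le 0$, which is what is actually used). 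Plugging $\varphi=\rho_V^-$ into \eqref{e-EL-int}: the first term becomes $\int\sgn(\rho_V)|\rho_V|^{1/2}\rho_V^- = -\int |\rho_V^-|^{3/2}\le 0$; the potential term $-\langle\rho_V^-,V\rangle\le 0$ by the sign assumption on $V$; and the Coulomb term is $\langle\rho_V,\rho_V^-\rangle_{\HHH}=\langle\rho_V^+-\rho_V^-,\rho_V^-\rangle_{\HHH}=\langle\rho_V^+,\rho_V^-\rangle_{\HHH}-\|\rho_V^-\|_{\HHH}^2\le -\|\rho_V^-\|_{\HHH}^2\le 0$, using $\langle\rho_V^+,\rho_V^-\rangle_{\HHH}\le0$. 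Summing, the left-hand side of \eqref{e-EL-int} is $\le -\int|\rho_V^-|^{3/2}-\|\rho_V^-\|_{\HHH}^2\le 0$, but it equals $0$; hence $\int|\rho_V^-|^{3/2}=0$, so $\rho_V^-=0$, i.e. $\rho_V\ge0$.

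\textbf{Main obstacle.} The routine parts are existence, uniqueness, and the formal variation. The genuinely delicate point is the sign argument: one must (i) make rigorous sense of the hypothesis $(-\Delta)^{1/2}V\ge 0$ at the level of the duality pairing $\langle\,\cdot\,,V\rangle$ on $\H_0$ (so that it can be tested against $\rho_V^-$ rather than only against $C_c^\infty$ functions), and (ii) establish the truncation inequality $\langle\rho^+,\rho^-\rangle_{\HHH}\le 0$ — equivalently $\|\rho^\pm\|_{\HHH}\le\|\rho\|_{\HHH}$ — together with the fact that $\rho_V^-\in\H_0$. Point (ii) is the $\oH^{-1/2}$ counterpart of a Sobolev-truncation lemma and, while true, requires the nonlocal machinery (Caffarelli--Silvestre extension or the explicit Gagliardo-type bilinear form); I expect this lemma, not the minimization, to carry the technical weight of the proof.
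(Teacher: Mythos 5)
Your treatment of existence, uniqueness, and the Euler--Lagrange equation is essentially the same as the paper's (Proposition~\ref{p-Min0}): direct method, weak lower semicontinuity, strict convexity, and a standard variation. The minor technicality you gloss over is that one extracts separate weak limits in $L^{3/2}(\R^2)$ and weak-$*$ limits in $\HHH$ and must identify them, which the paper does via testing against $\varphi\in L^3(\R^2)\cap\HH$; but this is routine.

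The positivity argument, however, contains a genuine sign error that invalidates it. You invoke the inequality $\langle\rho^+,\rho^-\rangle_{\HHH}\le 0$ as the counterpart of the $H^1$ truncation lemma. This is false, and in fact the inequality goes the \emph{other} way. The $\HHH$ inner product is the Coulomb bilinear form: for sufficiently regular $f,g$,
\begin{align*}
\langle f,g\rangle_{\HHH}=\frac{1}{2\pi}\iint_{\R^2\times\R^2}\frac{f(x)\,g(y)}{|x-y|}\,\ud^2x\,\ud^2y,
\end{align*}
and the kernel $|x-y|^{-1}$ is pointwise positive, so $\langle\rho^+,\rho^-\rangle_{\HHH}\ge 0$ whenever $\rho^\pm\ge 0$. (The Gagliardo-type quadratic form you wrote down, $\iint|u(x)-u(y)|^2/|x-y|^3$, is the \emph{primal} $\HH$ seminorm, not the dual $\HHH$ norm; it is only for $\HH$ that the cross-term $\langle u^+,u^-\rangle_{\HH}\le 0$ holds.) Consequently, when you test \eqref{e-EL-int} with $\varphi=\rho_V^-$, the Coulomb contribution $\langle\rho_V,\rho_V^-\rangle_{\HHH}=\langle\rho_V^+,\rho_V^-\rangle_{\HHH}-\|\rho_V^-\|^2_{\HHH}$ is the difference of two nonnegative quantities and has no definite sign, so your conclusion does not follow.

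The paper circumvents exactly this obstruction by passing to the substitution $u_V:=\sgn(\rho_V)|\rho_V|^{1/2}$. One first shows (Proposition~\ref{pPDE0}) that $u_V\in\HH$ and solves the fractional PDE $(-\Delta)^{1/2}u+|u|u=(-\Delta)^{1/2}V$ in $\HH$; in particular $\rho_V=|u_V|u_V$. Then (Proposition~\ref{pPDEplus}) one tests that equation with $u_V^-\in\HH$, where the \emph{correct} truncation inequality $\langle u_V^+,u_V^-\rangle_{\HH}\le 0$ is available, together with the monotonicity $u|u|u^-\le 0$; the hypothesis $(-\Delta)^{1/2}V\ge 0$ is used through $\langle V,u_V^-\rangle_{\HH}=\langle(-\Delta)^{1/2}V,u_V^-\rangle\ge 0$. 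This forces $u_V^-=0$, hence $u_V\ge 0$, hence $\rho_V\ge 0$. The key structural point you missed is that the truncation lemma lives in $\HH$, not $\HHH$, and the substitution $\rho\mapsto u=\sgn(\rho)|\rho|^{1/2}$ is precisely what moves the argument from the space where the inequality fails to the space where it holds.
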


Note that the statements of this theorem, including the ones about the
positivity and H\"older continuity of the minimizer apply to
$V = V_{Z,d}$ for all $Z > 0$ and $d > 0$. The positivity of the
minimizer follows from the well-known formula
  \begin{align}
    \label{eq:VZdhalf}
    (-\Delta)^{1/2} V_{Z,d}(x) = \left. -{d \over dt} V_{Z,t + d}(x)
    \right|_{t = 0} = {Z d \over 2 \pi (d^2 + |x|^2)^{3/2}} \qquad x
    \in \R^2,
  \end{align}
  that is obtained from the interpretation of the half-Laplacian in
  $\R^2$ via harmonic extension to $\R^2 \times (0, \infty)$ (see also
  the direct calculations in \cite{FV}*{p. 258 and (6.5)}). From this
  formula, it also follows that $V = V_{Z,d} \in \HH$ for all $Z > 0$
  and $d > 0$, due to the fact that
  $(-\Delta)^{1/2} V_{Z,d} \in L^{4/3}(\R^2) \subset \HHH$ in this
  case.

If, e.g., $\rho_V\in L^{4/3}(\R^2)$,
then \eqref{e-EL-int} implies that
\begin{align}
\label{eq:EL-weak0}
  \sgn(\rho_V(x))|\rho_V(x)|^{1/2}-V(x)+\frac{1}{2\pi}
  \int_{\R^2}\frac{\rho_V(y)}{|x-y|}\ud^2  
  y=0\quad\text{for a.e. $x \in\R^2$.}
\end{align}
However, \eqref{eq:EL-weak0} is not valid for a general $V\in\H_0$,
since the nonlocal term may not be well--defined as the Lebesgue
integral. Nevertheless, we show that for any $V\in\HH$ the
Euler--Lagrange equation \eqref{e-EL-int} is equivalent to the
fractional semilinear PDE
\begin{align}\label{ELu0-intr}
  (-\Delta)^{1/2} u +  |u|u = (-\Delta)^{1/2} V \quad  \text{in $\HH$},
\end{align}
and 
\begin{align}\label{e-Phiu0intr}
u_V:=\sgn(\rho_V)|\rho_V|^{1/2}\in\HH
\end{align}
is the unique solution of \eqref{ELu0-intr}.  We further show that
\eqref{ELu0-intr} satisfies suitable weak maximum and comparison
principles. This allows us to employ barrier techniques to study the
decay of the solution $u_V$. With the aid of explicit $\log$--barrier
functions constructed in Section \ref{ALog}, we establish the main
result of this work.

\begin{theorem}\label{thm-main}
  Let $Z>0$, $d>0$ and let $V_{Z,d}$ be defined in \eqref{VZd}.  Then
  the minimizer $\rho_{V_{Z,d}} \in \H_0$ of $\mathcal E_0$ with
  $V = V_{Z,d}$ is H\"older continuous, radially symmetric
  non-increasing and satisfies
\begin{equation}\label{rho-bound}
  0<\rho_{V_{Z,d}}(x)\le V_{Z,d}(x)\quad\text{for all }x\in\R^2
\end{equation}
and
\begin{equation}\label{rho-decay}
  \rho_{V_{Z,d}}(x) \simeq \frac{1}{|x|^2\log^2 |x|} \quad\text{as
  }|x|\to\infty. 
\end{equation}
In particular, $\rho_{V_{Z,d}}\in L^1(\R^2)$ and
$\|\rho_{V_{Z,d}}\|_{L^1(\R^2)}=Z$.
\end{theorem}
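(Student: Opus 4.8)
The plan is to fix $V=V_{Z,d}$ throughout and exploit its special structure: for $d>0$ the potential $V_{Z,d}$ is bounded, smooth, radial, belongs to $\HH$ (its Fourier transform is a multiple of $|\xi|^{-1}e^{-d|\xi|}$, so $\|V_{Z,d}\|_\HH^2\sim\int_{\R^2}|\xi|^{-1}e^{-2d|\xi|}\ud^2\xi<\infty$), and satisfies
\begin{equation*}
  (-\Delta)^{1/2}V_{Z,d}(x)=\frac{Zd}{2\pi(d^2+|x|^2)^{3/2}}\ge 0 ,
\end{equation*}
as one sees by extending $V_{Z,d}$ to the Newtonian potential in $\R^3$ of a point charge located at $(0,-d)$ and reading off the Dirichlet--to--Neumann map. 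Hence Theorem \ref{thm:Egeneral} applies: there is a unique minimizer $\rho:=\rho_{V_{Z,d}}\ge 0$, and by \eqref{ELu0-intr}--\eqref{e-Phiu0intr} the function $u:=\rho^{1/2}\ge 0$ is the unique solution in $\HH$ of $(-\Delta)^{1/2}u+u^2=(-\Delta)^{1/2}V_{Z,d}$. Radial symmetry of $\rho$ is immediate from the rotational invariance of $\E_0^{V_{Z,d}}$ and uniqueness. H\"older continuity of $\rho=u^2$ (and the fact that $u$ is continuous and vanishes at infinity) follows from the elliptic bootstrap for this PDE carried out in Section \ref{sec:setting0}, using that $(-\Delta)^{1/2}V_{Z,d}\in L^\infty(\R^2)\cap L^p(\R^2)$ for every $p>\tfrac23$. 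Monotonicity in $|x|$ will be obtained from the moving plane method for the fractional equation (the source term being radial and strictly decreasing), or, equivalently, from the comparison principle of Sections \ref{sec:half}--\ref{sec:super} applied to $u$ and its reflections across hyperplanes through the origin.

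Next come the pointwise bounds \eqref{rho-bound}. One has $\rho\not\equiv 0$ (otherwise $u\equiv 0$ and the PDE forces $(-\Delta)^{1/2}V_{Z,d}\equiv 0$, which is false), so $u\ge 0$, $u\not\equiv 0$ satisfies $(-\Delta)^{1/2}u=(-\Delta)^{1/2}V_{Z,d}-u^2$; evaluating at a hypothetical zero of $u$ would equate a strictly negative quantity with a strictly positive one, so the strong maximum principle for $(-\Delta)^{1/2}$ gives $u>0$, hence $\rho>0$, everywhere in $\R^2$. For the upper bound, $w:=V_{Z,d}-u$ satisfies $(-\Delta)^{1/2}w=u^2\ge 0$ and $w\to 0$ at infinity, so $w\ge 0$ by the weak maximum principle, i.e. $u\le V_{Z,d}$; the sharper bound $\rho=u^2\le V_{Z,d}$ is then delivered by the comparison principle of Sections \ref{sec:half}--\ref{sec:super}, for instance by verifying that $\sqrt{V_{Z,d}}$ is a supersolution of \eqref{ELu0-intr}.

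The core of the argument is the decay \eqref{rho-decay}. Since $(-\Delta)^{1/2}V_{Z,d}(x)=O(|x|^{-3})$ is of lower order, for $|x|$ large $u$ behaves like a solution of $(-\Delta)^{1/2}u+u^2=0$, and the natural ansatz is $u_\star(x):=|x|^{-1}\log^{-1}|x|$. The key computation, carried out in Section \ref{ALog}, is the leading-order asymptotics
\begin{equation*}
  (-\Delta)^{1/2}\!\left(\frac{1}{|x|\log|x|}\right)=-\frac{1+o(1)}{|x|^2\log^2|x|}\qquad\text{as }|x|\to\infty
\end{equation*}
together with quantitative control on the $o(1)$; since $|x|^{-1}$ is, up to a constant, $\tfrac12$-harmonic away from the origin in $\R^2$, the logarithm enters only as a borderline perturbation, and it is precisely this borderline, barely-integrable character that makes the estimate delicate. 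One then feeds suitably perturbed barriers --- essentially $(1\pm\eps)u_\star$, corrected by lower-order terms such as $|x|^{-1}\log^{-1-\delta}|x|$ to absorb the remainder, and passing if necessary through an intermediate step to improve the crude bound $u\le V_{Z,d}=O(|x|^{-1})$ to $u=O(|x|^{-1}\log^{-1}|x|)$ --- into the comparison principle of Sections \ref{sec:half}--\ref{sec:super} on exterior domains $\{|x|>R\}$, the inner boundary values on $\{|x|=R\}$ being supplied by the bounds already available; this sandwiches $(1-\eps)u_\star\le u\le(1+\eps)u_\star$ for $|x|$ large, and letting $\eps\to 0$ gives $u(x)\simeq|x|^{-1}\log^{-1}|x|$, hence \eqref{rho-decay} after squaring. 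That the constant is exactly $1$, independently of $Z$ (universality), is the content of Sections \ref{sec:decay} and \ref{sec:uni}. I expect this step --- manufacturing barriers precise enough to capture the borderline rate, and to handle the sign change of $\log|x|$ across $|x|=1$ --- to be the principal obstacle.

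Finally, \eqref{rho-decay} makes $\rho$ integrable at infinity (the logarithmic factor is decisive), so $\rho\in L^1(\R^2)\cap L^\infty(\R^2)\subset L^{4/3}(\R^2)$ and the Euler--Lagrange equation holds in the pointwise form \eqref{eq:EL-weak0}:
\begin{equation*}
  u(x)=V_{Z,d}(x)-\frac{1}{2\pi}\int_{\R^2}\frac{\rho(y)}{|x-y|}\ud^2y .
\end{equation*}
Letting $|x|\to\infty$ and using $u(x)\simeq|x|^{-1}\log^{-1}|x|=o(|x|^{-1})$, $V_{Z,d}(x)=\tfrac{Z}{2\pi|x|}+o(|x|^{-1})$, and $\tfrac{1}{2\pi}\int_{\R^2}\tfrac{\rho(y)}{|x-y|}\ud^2y=\tfrac{\|\rho\|_{L^1(\R^2)}}{2\pi|x|}+o(|x|^{-1})$ (valid since $\rho\in L^1(\R^2)$), and matching the coefficients of $|x|^{-1}$, we obtain $\|\rho\|_{L^1(\R^2)}=Z$ --- the complete screening established in Section \ref{sec:charge}.
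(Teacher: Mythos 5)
Your outline reproduces the paper's structure up to the decay step: existence and uniqueness come from Theorem \ref{thm:Egeneral}, $u_V:=\sgn(\rho_V)|\rho_V|^{1/2}\in\HH$ solves the half--Laplacian PDE \eqref{ELu0-intr}, strict positivity and the upper bound $u_V\le V_{Z,d}$ are obtained via the comparison principle (Corollary \ref{c-KO}), H\"older regularity from Lemma \ref{lem:regularity}, and the total charge $Z$ from the asymptotic Newton theorem (Lemma \ref{lemma-Newton}, Proposition \ref{P-Z}). For radial monotonicity, you propose moving planes, whereas the paper (Corollary \ref{c-monotone}) observes that $u_V$ uniquely minimizes the convex functional $J_V(u)=\tfrac12\|u\|_\HH^2+\tfrac13\|u\|_{L^3}^3-\langle u,V\rangle_\HH$ and invokes symmetric--decreasing rearrangement plus uniqueness; both routes are viable, but the rearrangement route is cheaper given the variational framework already in place.

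There is, however, a genuine gap in the step that produces the $\simeq$ (i.e., the exact constant $1$) in \eqref{rho-decay}. You write that Section \ref{ALog} establishes $(-\Delta)^{1/2}\!\bigl(\tfrac{1}{|x|\log|x|}\bigr)=-\tfrac{1+o(1)}{|x|^2\log^2|x|}$, and then propose to sandwich $u_V$ between $(1-\eps)u_\star$ and $(1+\eps)u_\star$. But Lemma \ref{l-Barrier} proves only the weaker statement $(-\Delta)^{1/2}U(r)\sim -\tfrac{1}{r^2\log^2 r}$, meaning bounds from above and below by \emph{non-matching} constants (the upper bound \eqref{Lsub} and the lower bound \eqref{Lsuper} carry different multiplicative constants, and the contribution from the singular part of the kernel near $\tau=1$ in \eqref{i3} is only estimated, not computed). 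Consequently the barrier argument in Proposition \ref{P-decay}, which uses the family $U_\lambda=\lambda U$ with a large/small multiplier $\lambda$, only delivers $u_V\sim\tfrac{1}{|x|\log|x|}$, again with $\sim$ and not $\simeq$. The exact constant $1$ is obtained by an entirely different mechanism in Proposition \ref{P-univers}: one passes to the pointwise integral equation \eqref{eq:ELu0}, makes Katsnelson's substitution $F(t)=e^t u(e^t)$, rewrites the equation as $F(t)=G(t)+O(t^{-2})$ with $G(t)=\int_t^\infty F^2(t')\,\ud t'$, and solves the resulting Riccati-type asymptotic ODE $G'=-G^2(1+O(t^{-1}))$ to get $F(t)\simeq 1/t$. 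Your $(1\pm\eps)u_\star$ sandwich would indeed close the argument \emph{if} the precise constant-$1$ barrier asymptotics were in hand, but that is a substantially harder computation (requiring the exact asymptotics of the hypergeometric kernel $\mathcal K$ and a precise evaluation of both the singular-$\tau$ and large-$\tau$ contributions) than what Lemma \ref{l-Barrier} provides, and the paper deliberately sidesteps it. As written, your decay step overstates what the cited lemma proves and therefore leaves the universality of the constant unproved.
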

\noindent Notice that the asymptotic decay rate in
  \eqref{rho-decay} is {\em universal}, i.e., it does not depend on
  either the value of the charge $Z$ or $d$ for large $|x|$. Such
  ``universality of decay'' is well--known in the standard atomic
  Thomas--Fermi theory, going back to Sommerfeld
  \cite{sommerfeld1932}, cf. \cite[Section 5]{Solovej} for a
  discussion. In TF-theory for graphene a similar universality was
  observed by Katsnelson \cite{Katsnelson:06} (see also
  \cite{fogler07}).

\begin{remark}
  The order of the estimate in \eqref{rho-decay} remains valid for a
  more general class of external potentials $V$ with sufficiently fast
  decay at infinity, see Proposition \ref{P-decay}. The significance
  of the $\log$--decay becomes clear if we note that $p=2$ plays a
  role of the Serrin's critical exponent \cite{Veron-14}*{(1.7)} for
  the equation
\begin{align}\label{e-Serrin}
(-\Delta)^{1/2} u +  |u|^{p-1}u = f \quad  \text{in $\HH$},
\end{align}
with $p>1$ and (for simplicity) nonnegative $f\in C^\infty_c(\R^2)$.
If $p>2$ the linear part in \eqref{e-Serrin} dominates and solutions
must decay as the Green function of $(-\Delta)^{1/2}$,
i.e. $|x|^{-1}$.  For $3/2<p<2$ the nonlinear part in \eqref{e-Serrin}
dominates and the solutions should have ``nonlinear'' decay rate
$|x|^{-1/(p-1)}$.  In the Serrin's critical regime $p=2$ the linear
and nonlinear parts balance each other, which leads to the
$\log$--correction in the decay asymptotics, correctly captured by
Katsnelson \cite{Katsnelson:06}. Such $\log$-correction is well-known
for the local Laplacian $-\Delta$ \cite{Veron-81}*{Theorem 3.1}. We
are not aware of similar results in the fractional Laplacian case.
\end{remark}

\begin{remark}
  If $d=0$ then $V_{Z,0}(x)=Z / (2 \pi |x|) \not\in(\HH+L^{3}(\R^2))$
  and $\E_0$ with $V = V_{Z,0}$ is unbounded below, for any $Z\neq
  0$. In fact, by scaling, $V_{Z,d}(x)=d^{-1}V_{Z,1}(x/d)$ and
  $\rho_{V_{Z,d}}(x)=d^{-2}\rho_{V_{Z,1}}(x/d)$. Then
\begin{align}
\E_0^{V_{Z,d}}(\rho_{V_{Z,d}}) =
d^{-1}\E_0^{V_{Z,1}}(\rho_{V_{Z,1}})\to-\infty,
\end{align}
as $d\to 0$. Note also that by scaling,
$\|\rho_{V_{Z,d}}\|_{L^1(\R^2)}=\|\rho_{V_{Z,1}}\|_{L^1(\R^2)}$.
\end{remark}

\begin{remark}
  \label{rem:VZd}
  Observe that by \eqref{eq:VZdhalf} we have
  $(-\Delta)^{1/2} V_{Z,d} \to Z \delta_0$ in $\mathcal D'(\R^2)$ as
  $d \to 0$, so in the case $V = V_{Z,0}$ equation \eqref{ELu0-intr}
  formally becomes
  \begin{align}\label{e-Brezis}
    (-\Delta)^{1/2} u + u^2 = Z \delta_0 \quad  \text{in
    $\mathcal D'(\R^2)$}. 
  \end{align}
  Such an equation has no positive distributional solutions, see
  \cite{Veron-14}*{Theorem 4.2}.
\end{remark}

Lastly, as a corollary to Theorem \ref{thm-main} we have the following
result that is relevant to the experiments on ion cluster screening in
single graphene sheets \cite{wang13}. This result is a direct
consequence of the universal decay estimate in \eqref{rho-decay} and
the comparison principle for \eqref{ELu0-intr}.
  \begin{corollary}
    \label{cor:cluster}
    Let $N \geq 2$ be an integer, let $x_i \in \R^2$ and
    let $Z_i > 0$ and $d_i > 0$ for all $i = 1, \ldots, N$. Then the
    minimizer $\rho_{V_N} \in \mathcal H_0$ of $\mathcal E_0$ with
    $V = V_N$, where $V_N$ is given by \eqref{eq:VN}, satisfies the
    conclusions of Theorem \ref{thm-main}.
  \end{corollary}
  In physical terms, this result implies that a cluster of
  out-of-plane charges of the same sign exhibits the same universal
  decay at infinity of the induced charge density in a graphene layer
  as a single point charge and is independent of the charge
  magnitude. Therefore, surprisingly, measuring the behaviour of the
  induced charge density far from the cluster does not provide any
  information about the cluster itself.

\section{Variational setting at the neutrality point}
\label{sec:setting0}

 \subsection{Space $\mathring{H}^{1/2}(\R^2)$}
 \label{sec:func}
 
 Recall that the homogeneous Sobolev space $\mathring{H}^{1/2}(\R^2)$
 can be defined as the completion of $C^\infty_c(\R^2)$ with respect
 to the Gagliardo norm
 \begin{align}\label{Gagliardo}
 	\|u\|_{\oH^{1/2}(\R^2)}^2 :=
 	\frac{1}{4\pi}\iint_{\R^2\times\R^2}\frac{|u(x)-u(y)|^2}{|x-y|^{3}}\ud^2
 	x \ud^2 y.
 \end{align}
By the fractional Sobolev  inequality \cite[Theorem 8.4]{Lieb-Loss}, \cite[Theorem
 6.5]{palatucci12},
 \begin{align}\label{sobolev-inequality}
 	\|u\|_{\mathring{H}^{1/2}(\R^2)}^2\ge \sqrt{\pi} \,
 	\|u\|_{L^4(\R^2)}^2,\qquad\forall u\in C^\infty_c(\R^2).
 \end{align}
 In particular, the space $\mathring{H}^{1/2}(\R^2)$ is a well-defined
 space of functions and
 \begin{align}\label{sobolev-embedd}
 	\mathring{H}^{1/2}(\R^2)\subset L^4(\R^2).
 \end{align}
 The space $\mathring{H}^{1/2}(\R^2)$ is also a Hilbert space, with
 the scalar product associated to \eqref{Gagliardo} given by
 \begin{align}\label{bi-Gagliardo}
 	\langle u,v\rangle_{\oH^{1/2}(\R^2)} :=
 	\frac{1}{4\pi}\iint_{\R^2\times\R^2}\frac{(u(x)-u(y))(v(x)-v(y))}{|x-y|^{3}}\ud^2
 	x \ud^2 y.
 \end{align}
 Recall (cf.~\cite{Fukushima}) that if $u\in \mathring{H}^{1/2}(\R^2)$
 then $u^+, u^-\in \mathring{H}^{1/2}(\R^2)$ and
 $\|u^\pm\|_{\mathring{H}^{1/2}(\R^2)}\le\|u\|_{\mathring{H}^{1/2}(\R^2)}$.
 Moreover, $\langle u^+,u^-\rangle_{\oH^{1/2}(\R^2)}\le 0$.

 The dual space to $\mathring{H}^{1/2}(\R^2)$ is denoted
 $\mathring{H}^{-1/2}(\R^2)$.  According to the Riesz representation
 theorem, for every $F\in\mathring{H}^{-1/2}(\R^2)$ there exists a
 uniquely defined {\em potential} $U_F\in\mathring{H}^{1/2}(\R^2)$ such
 that
 \begin{align}\label{weak-potential}
 	\langle U_F,\varphi\rangle_{\oH^{1/2}(\R^2)}=\langle
 	F,\varphi\rangle\qquad\forall\varphi\in \mathring{H}^{1/2}(\R^2),
 \end{align}
 where $\langle F,\cdot\rangle:\mathring{H}^{1/2}(\R^2)\to\R$ denotes
 the bounded linear functional generated by $F$,  
 $\langle \cdot ,\cdot \rangle_{\HH}$ is the inner product in
 $\HH$, and $\langle \cdot ,\cdot \rangle_{\HHH}$ will be similarly
 defined as the inner product in $\HHH$.
 Moreover,
 \begin{align}\label{isometry}
 	\|U_F\|_{\oH^{1/2}(\R^2)}=\|F\|_{\mathring{H}^{-1/2}(\R^2)},
 \end{align}
 so the duality \eqref{weak-potential} is an isometry.

 The potential $U_F\in \mathring{H}^{1/2}(\R^2)$ satisfying
 \eqref{weak-potential} is interpreted as the {\em weak solution} of
 the linear equation
 \begin{align}\label{weak-Laplace}
 	(-\Delta)^{1/2}U_F= F\qquad\text{in $\R^2$},
 \end{align}
 and we recall that for functions $u\in C^\infty_c(\R^2)$, the fractional
 Laplacian $(-\Delta)^{1/2}$ can be defined as
 \begin{align}\label{half-Laplace}
 	(-\Delta)^{1/2}u(x)= \frac{1}{4\pi}\int_{\R^2}\frac{2 u(x) - u(x+y)
 		- u(x-y)}{|y|^3}\ud^2 y\qquad(x\in\R^2),
 \end{align}
cf. \cite[Proposition 3.3]{palatucci12}.

\subsection{Regular distributions in $\HHH$ and potentials}
Recall that $\rho\in \HHH\cap L^1_{\loc}(\R^2)$ means that $\rho$ is a
{\em regular distribution} in $\mathcal D'(\R^2)$, i.e.
\begin{align}\label{phi-linear}
  \langle \rho, \varphi  \rangle : = \int_{\R^2}\rho(x)\varphi(x)
  \ud^2 
  x\quad\forall\varphi\in C^\infty_c(\R^2), 
\end{align}
and $\langle \rho, \varphi \rangle$ is bounded by a multiple of
$\|\varphi\|_{\HH}$.  Then $\langle \rho, \cdot\rangle$ is understood
as the unique continuous extension of \eqref{phi-linear} to $\HH$.
Caution however is needed as not every regular distribution
$\rho\in \HHH\cap L^1_{\loc}(\R^2)$ admits an integral representation
\eqref{phi-linear} on all of $\HH$. In other words,
$\rho\in \HHH\cap L^1_{\loc}(\R^2)$ does not necessarily imply that
$\rho w\in L^1(\R^2)$ for every $w\in \HH$. Examples of this type go
back to H.~Cartan (cf. \cite{landkof}, \cite{brezis79}, or
\cite{LMM}*{Remark 5.1} for an example from $\HHH\cap C^\infty(\R^2)$
and further references).  As a consequence, the Coulomb energy term in
$\E^{TF}$ may not be defined in the sense of Lebesgue's integration
for all $\rho\in\H_0$ and should be interpreted {\em in the
  distributional sense}, i.e., in the definition of
  $\E^{TF}_{0}$ one should replace $\D(\rho, \rho)$ with
  $\|\rho\|_{\HHH}^2$. 
Recall however that every nonnegative distribution is a measure \cite[Theorem 6.22]{Lieb-Loss}.

An alternative reinterpretation of $\D(\rho,\rho)$ can be given in
terms of potentials. Given $\rho\in \HHH\cap L^1_{\loc}(\R^2)$, let $U_{\rho}\in\HH$ 
be the uniquely defined potential of $\rho$, defined as in \eqref{phi-linear}
by the Riesz's representation theorem.
If $\rho\in L^1(\R^2,(1+|x|)^{-1}\ud^2 x)$ then the potential $U_\rho$
could be identified with the Riesz potential of the function
$\rho$, so that 
\begin{align}\label{Riesz-ae}
  U_\rho(x)=\frac{1}{2\pi}\int_{\R^2}\frac{\rho(y)}{|x-y|}\ud^2 y
  \quad\text{a.e.~in $\R^2$,} 
\end{align}
(see \cite[(1.3.10)]{landkof}).
Furthermore, according to the Hardy--Littlewood--Sobolev (HLS) inequality (cf. \cite[Section 5.1, Theorem 1]{stein70}),
if $\rho\in L^s(\R^2)$ with $s\in(1,2)$ then $U_\rho\in L^t(\R^2)$ with $\frac1t=\frac1s-\frac12$, and
\begin{align}\label{HLS}
\|U_\rho\|_{L^t(\R^2)}\le C\|\rho\|_{L^s(\R^2)}. 
\end{align}

Even if \eqref{Riesz-ae} is valid, $\rho U_\rho\not\in L^1(\R^2)$ in
general.  However, if $\varphi\in \HHH\cap L^{4/3}(\R^2)$ then
$\varphi U_\rho\in L^1(\R^2)$ by the HLS
inequality and
\begin{align}\label{Riesz-dual}
  \frac{1}{2\pi}\iint_{\R^2 \times \R^2} \frac{\rho(x)\varphi(y)}{|x -
  y|}  \ud^2 x \ud^2 y=\int_{\R^2}U_\rho(x)\varphi(x)\ud^2 x=
  \langle
  U_\rho,U_\varphi\rangle_\HH=\langle\rho,\varphi\rangle_{\HHH}. 
\end{align}
In particular,
\begin{align}\label{Riesz-quad}
  \D(\rho,\rho)=\int_{\R^2}U_\rho(x)\rho(x)\ud^2 x=
  \|U_\rho\|^2_\HH=\|\rho\|^2_{\HHH}, 
\end{align}
which means that $L^{4/3}(\R^2)\subset \HHH$ and the Coulomb energy is
well--defined on $L^{4/3}(\R^2)$ in the sense of Lebesgue's
integration.

\subsection{Existence, uniqueness and regularity of the minimizers.}
Consider the unconstrained minimization problem
\begin{align}
E_0:=\inf_{\rho \in \H_0} \E_0(\rho).
\end{align}
It is easy to prove the following.

\begin{proposition}[Existence]\label{p-Min0}
  For every $V\in \HH + L^3(\R^2)$, the TF--energy $\E_0$ admits
  a unique minimizer $\rho_V \in \H_0$ such that $\E_0(\rho_V)=E_0$.
  The minimizer $\rho_V$ satisfies the Euler--Lagrange equation
	\begin{align}\label{e-EL0}
          \int_{\R^2} \sgn(\rho_V)|\rho_V|^{1/2}\varphi\ud^2 x -
          \langle \varphi, V \rangle +  \langle\rho_V,
          \varphi\rangle_\HHH=0\quad\forall\varphi\in \H_0. 
	\end{align}
\end{proposition}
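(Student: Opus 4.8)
The plan is to run the direct method of the calculus of variations: coercivity plus weak lower semicontinuity give existence, strict convexity of $\E_0$ gives uniqueness, and a differentiation-under-the-integral argument gives the Euler--Lagrange equation \eqref{e-EL0}.

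First I would establish coercivity and boundedness below. Writing $V=V_1+V_2$ with $V_1\in\HH$ and $V_2\in L^3(\R^2)$, I would use the decomposition \eqref{eq:duality} of the pairing together with Cauchy--Schwarz in $\HHH$ and H\"older to get $|\langle\rho,V\rangle|\le\|V_1\|_{\HH}\|\rho\|_{\HHH}+\|V_2\|_{L^3(\R^2)}\|\rho\|_{L^{3/2}(\R^2)}$. Since $ab\le\frac14a^2+b^2$ and since $t\mapsto t^{3/2}$ is superlinear (so $\|V_2\|_{L^3}\|\rho\|_{L^{3/2}}\le\frac13\|\rho\|_{L^{3/2}}^{3/2}+\frac23\|V_2\|_{L^3}^3$), absorbing these into the first and third terms of \eqref{E} yields $\E_0(\rho)\ge\frac13\|\rho\|_{L^{3/2}(\R^2)}^{3/2}+\frac14\|\rho\|_{\HHH}^2-C(V)$, whence $E_0>-\infty$ and $\E_0$ is coercive on $\H_0$. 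Then for a minimizing sequence $(\rho_n)$, bounded in $\HHH$ and in $L^{3/2}(\R^2)$, I would extract a subsequence with $\rho_n\wconv\rho_V$ weakly in both spaces (the two weak limits agree as distributions, so $\rho_V\in\H_0$), and pass to the limit: $\frac12\|\cdot\|_{\HHH}^2$ is weakly lower semicontinuous as a squared Hilbert norm, $\rho\mapsto\frac23\int_{\R^2}|\rho|^{3/2}\ud^2 x$ is convex and norm-continuous on $L^{3/2}(\R^2)$ hence weakly lower semicontinuous, and $\rho\mapsto\langle\rho,V\rangle$ is weakly continuous since $V\in\H_0'$; together these give $\E_0(\rho_V)\le\liminf_n\E_0(\rho_n)=E_0$. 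Uniqueness would follow from strict convexity of $\E_0$: the map $t\mapsto|t|^{3/2}$ is strictly convex on $\R$, so $\rho\mapsto\int_{\R^2}|\rho|^{3/2}\ud^2 x$ is strictly convex on $L^{3/2}(\R^2)$, while the quadratic term is convex and the potential term affine.

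For the Euler--Lagrange equation I would fix $\varphi\in\H_0$, set $g(t):=\E_0(\rho_V+t\varphi)$, and compute $g'(0)$. The quadratic and linear parts contribute $\langle\rho_V,\varphi\rangle_{\HHH}$ and $-\langle\varphi,V\rangle$. For the Thomas--Fermi part I would differentiate under the integral: the integrand $\frac23|\rho_V+t\varphi|^{3/2}$ has $t$-derivative $\sgn(\rho_V+t\varphi)|\rho_V+t\varphi|^{1/2}\varphi$, which for $|t|\le 1$ is dominated in absolute value by $(|\rho_V|+|\varphi|)^{1/2}|\varphi|\in L^1(\R^2)$ (H\"older with exponents $3$ and $3/2$), so dominated convergence gives differentiability of $g$ at $0$ with $g'(0)=\int_{\R^2}\sgn(\rho_V)|\rho_V|^{1/2}\varphi\ud^2 x-\langle\varphi,V\rangle+\langle\rho_V,\varphi\rangle_{\HHH}$; minimality forces $g'(0)=0$, which is \eqref{e-EL0}. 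Everything here is routine, as the statement anticipates; the two places needing a little care are the coercivity estimate --- where $\langle\rho,V\rangle$ must be handled through the $\HH+L^3$ duality \eqref{eq:duality} rather than as a Lebesgue integral, since it need not be one --- and the domination step for the Euler--Lagrange equation, where the derivative of the $3/2$-power term is unbounded and must nonetheless be pinned down in $L^1$ using only the $L^{3/2}$ integrability of $\rho_V$ and $\varphi$.
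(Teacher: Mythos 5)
Your proposal is correct and follows essentially the same route as the paper: direct method with weak-$*$/weak compactness in $\HHH$ and $L^{3/2}(\R^2)$, identification of the two weak limits via test functions, weak lower semicontinuity of the three terms, uniqueness from strict convexity of $t\mapsto|t|^{3/2}$, and a Gateaux-differentiation computation for the Euler--Lagrange equation (which the paper calls ``standard'' and omits, as it does the coercivity estimate; you supply the details). One small slip: in the Young step for the $L^3$--$L^{3/2}$ pairing the clean bound is $ab\le\tfrac{2}{3}a^{3/2}+\tfrac{1}{3}b^{3}$, so to keep a $\tfrac13$ coefficient on $\|\rho\|_{L^{3/2}}^{3/2}$ you should write $\|V_2\|_{L^3}\|\rho\|_{L^{3/2}}\le\tfrac13\|\rho\|_{L^{3/2}}^{3/2}+\tfrac43\|V_2\|_{L^3}^3$ (a weighted Young with $\lambda=2^{-2/3}$), not $\tfrac23\|V_2\|_{L^3}^3$; this does not affect the conclusion.
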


\proof It is standard to conclude from $V\in \HH + L^3(\R^2)$ that
$\E_0$ is bounded below on $\H_0$, i.e., that $E_0>-\infty$.

Consider a minimizing sequence $(\rho_n)\subset\H_0$.
Clearly
\begin{align}
\sup_n \|\rho_n\|_{L^{3/2}(\R^2)} \leq C,\qquad \sup_n \|\rho_n\|_\HHH \leq C.
\end{align}
Using weak-$*$ compactness of the closed unit ball in $\HHH$, we may
extract a subsequence, still denoted by $(\rho_n)$, such that
\begin{align}
\rho_n\rightharpoonup&\;\rho_V\qquad \text{in $L^{3/2}(\R^2)$},\label{weakP20}\\
\rho_n\stackrel{\ast}{\rightharpoonup}&\;F\qquad \text{in $\HHH$},\label{weakH20}
\end{align}
for some $\rho_V\in L^{3/2}(\R^2)$ and $F\in \HHH$.
By the definition, \eqref{weakP20} and \eqref{weakH20} mean that
\begin{align}\label{Phi-linP}
	\int_{\R^2}\rho_n(x)\varphi(x) \ud^2 x\to&
	\int_{\R^2}\rho_V(x)\varphi(x) \ud^2 x\qquad\forall \varphi\in
	L^{3}(\R^2), \\
	\label{Phi-linH}
	\langle\rho_n,\varphi\rangle=\int_{\R^2}\rho_n(x)\varphi(x) \ud^2
	x\to& \;\;\langle F,\varphi\rangle\qquad\forall \varphi\in \HH. 
\end{align}
Therefore, passing to the limit we obtain
\begin{align}\label{Phi-linH-2}
	\int_{\R^2}\rho_V(x)\varphi(x) \ud^2 x=\langle
	F,\varphi\rangle\quad\forall \varphi\in L^3(\R^2)\cap \HH.
\end{align}
In particular, $\rho_V\in\HHH$ defines a regular distribution in $\mathcal{D}'(\R^2)$
and we may identify $F=\rho_V$. This implies that
\begin{equation}\label{E-vv0}
\E_0(\rho_V) \le\liminf_{n\to\infty} \E_0(\rho_n)=E_0,
\end{equation}
which follows from the weak lower semicontinuity of the
$\|\cdot\|_{L^{3/2}(\R^2)}$ and $\|\cdot\|_{\HHH}$ norms, and the
weak continuity of the linear functionals $\langle \cdot, V \rangle$
on $\H_0$.

The uniqueness of the minimizer $\rho_V\in\H_0$ is a consequence of the strict convexity of the energy $\E_0$, which is the sum of the strictly convex kinetic energy, linear external potential energy, and positive definite quadratic Coulomb energy.

The derivation of the Euler--Lagrange equation \eqref{e-EL0} is standard, we omit the details.
\qed

\begin{remark}
  As was already mentioned, if $\rho_V\in \H_0\cap L^{4/3}(\R^2)$
  then \eqref{e-EL0} can be interpreted pointwise as the integral
    equation \eqref{eq:EL-weak0}.  However, in general the
  Euler--Lagrange equation \eqref{e-EL0} for $\E_0$ should be
  interpreted as
  \begin{align}
    \label{eq:EL-weak+0}
    \sgn(\rho_V)|\rho_V(x)|^{1/2}+ U_{\rho_V}=V\quad\text{in
    $\mathcal D'(\R^2)$},
  \end{align}
  where $U_{\rho_V}\in \HH$ is the potential of $\rho_V$ defined
    via \eqref{weak-potential}.  In particular, if 
  $\rho_V\ge 0$ then $U_{\rho_V} \geq 0$ (see \cite[Theorem 3.14]{duPlessis}) which implies $V\ge 0$ and
  \begin{align}\label{eq:EL-weak+0ineq}
    0\le \rho_V\le V^2\quad\text{in $\mathcal D'(\R^2)$}.
  \end{align}
\end{remark}

\begin{remark}\label{r-biject}
  The mapping $V\mapsto \rho_V$ is a bijection between
  $\H_0^\prime=\HH+L^{3}(\R^2)$ and $\H_0$. Indeed, the uniqueness of
  the minimizer implies that $\rho_V$ is injective. Further, it is
  clear that for any $\rho\in\H_0$,
	\begin{align}
	\label{eq:V-inv}
          V:=U_{\rho}+\sgn(\rho)|\rho(x)|^{1/2}\in \HH+L^{3}(\R^2),
	\end{align}
	which means that the mapping $\rho_V$ is also surjective.  In
        particular, this shows that non--regular {\em at infinity} distributions in
        $\HHH$ could occur amongst the minimizers. Simply choose a
        regular distribution $\rho\in\H_0$ such that $\rho\varphi\not\in L^1(\R^2)$ for some $\varphi\in\HH$ (see e.g. \cite[Example 4.1.]{LMM} for an explicit example) and generate the
        corresponding potential $V$ via \eqref{eq:V-inv}. 
\end{remark}

While for a generic $V\in \HH + L^3(\R^2)$ the information
$\rho_V\in \H_0$ is optimal, under additional restrictions on the
potential $V$ the regularity of the minimizer can be improved up to
the regularity of $V$. 

\begin{lemma}[H\"older regularity]\label{lem:regularity}
  Assume that $V\in \HH \cap C^{\alpha}(\R^2)$ for some
  $\alpha\in(0,1]$.  Then the minimizer $\rho_V\in\H_0$ additionally
  satisfies $\rho_V\in \H_0\cap C^{\alpha}(\R^2)$, and
  $\rho_V(x) \to 0$ as $|x| \to \infty$. Furthermore, the potential $U_\rho$
  could be identified with the Riesz potential of $\rho$ as in \eqref{Riesz-ae} and   
  $U_{\rho_V}\in C^{1/3}(\R^2)$.
\end{lemma}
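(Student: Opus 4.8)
The plan is to run a bootstrap on the Euler--Lagrange equation, exploiting the two representations of the minimizer furnished by the earlier analysis, and then to upgrade the resulting local regularity to decay at infinity by means of uniform estimates.

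\textbf{Preliminaries and first integrability.} By \eqref{e-Phiu0intr} and the remark containing \eqref{eq:EL-weak+0}, I may use that $u_V:=\sgn(\rho_V)|\rho_V|^{1/2}\in\HH$, that $\rho_V=|u_V|u_V$, and that the Euler--Lagrange equation reads $u_V+U_{\rho_V}=V$ in $\mathcal D'(\R^2)$, where $U_{\rho_V}\in\HH$ is the potential of $\rho_V$. First I would note that $\rho_V\in L^{3/2}(\R^2)$ and $(1+|x|)^{-1}\in L^3(\{|x|\ge 1\})$, so H\"older's inequality gives $\rho_V\in L^1(\R^2,(1+|x|)^{-1}\ud^2 x)$; hence by \eqref{Riesz-ae} the potential $U_{\rho_V}$ is the Riesz potential of $\rho_V$. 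Moreover $\HH\hookrightarrow L^4(\R^2)$ yields $\rho_V=|u_V|u_V\in L^2(\R^2)$, and the HLS inequality \eqref{HLS} with $s=3/2$ gives $U_{\rho_V}\in L^6(\R^2)$.

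\textbf{Bootstrap.} Since $V\in C^\alpha(\R^2)$ it is locally bounded, hence $V\in L^q_\loc(\R^2)$ for every $q<\infty$. Inserting $U_{\rho_V}\in L^6(\R^2)$ into the Euler--Lagrange equation gives $u_V=V-U_{\rho_V}\in L^6_\loc(\R^2)$, and therefore $\rho_V=|u_V|u_V\in L^3_\loc(\R^2)$ (iterating, with \eqref{HLS} applied to $\rho_V\in L^p$ for $p\in(3/2,2)$ to obtain $U_{\rho_V}\in L^q(\R^2)$ for all $q\in[4,\infty)$, one even gets $\rho_V\in L^q_\loc(\R^2)$ for all $q<\infty$). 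Next, fix $x_0\in\R^2$ and split $U_{\rho_V}=I_1(\rho_V\chi_{B_2(x_0)})+I_1(\rho_V\chi_{B_2(x_0)^c})$. On $B_1(x_0)$ the kernel $|x-y|^{-1}$ and all its $x$-derivatives are bounded for $y\notin B_2(x_0)$ and integrable against $\rho_V\in L^{3/2}(\R^2)$, since $|x-y|^{-k}\chi_{\{|x-y|\ge 1\}}\in L^3(\R^2)$ for every $k\ge 1$; hence the second term is $C^\infty$ on $B_1(x_0)$. The first term is the Riesz potential of a compactly supported $L^3$ function, and the Riesz potential of order $1$ in $\R^2$ maps $L^3$ into $C^{0,1/3}$ (equivalently, Schauder estimates for $(-\Delta)^{1/2}$, since $1-2/3=1/3$), so it is locally $C^{1/3}$. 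Thus $U_{\rho_V}\in C^{1/3}(\R^2)$, and then $u_V=V-U_{\rho_V}\in C^{\min(\alpha,1/3)}(\R^2)$; composing with the locally Lipschitz map $t\mapsto|t|t$ gives $\rho_V\in C^{\min(\alpha,1/3)}(\R^2)$. If $\alpha>1/3$, one further pass improves this: $\rho_V$ is now locally $C^{1/3}$, so by Schauder estimates $U_{\rho_V}=I_1\rho_V$ is locally $C^{1,1/3}$, in particular locally Lipschitz, whence $u_V=V-U_{\rho_V}\in C^\alpha(\R^2)$ (for $\alpha=1$ one uses instead that $V$ and $U_{\rho_V}$ are locally Lipschitz), and therefore $\rho_V=|u_V|u_V\in C^\alpha(\R^2)$.

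\textbf{Decay.} Finally I would make the estimates above uniform over unit balls $B_1(x_0)$: the contribution of the far part $I_1(\rho_V\chi_{B_2(x_0)^c})$ to $\|U_{\rho_V}\|_{L^\infty(B_1(x_0))}$ is bounded by $\|\rho_V\|_{L^{3/2}(\R^2)}$ times a factor tending to $0$ as $|x_0|\to\infty$, while the near part is controlled, via the bootstrap, by local norms of $V$ together with $\|\rho_V\|_{L^{3/2}(B_2(x_0))}$; since $\rho_V\in L^{3/2}(\R^2)$, the latter tends to $0$ as $|x_0|\to\infty$, and propagating this smallness through the Euler--Lagrange equation forces $\sup_{B_1(x_0)}|\rho_V|\to 0$, i.e. $\rho_V(x)\to 0$ as $|x|\to\infty$. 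I expect this last step to be the genuine obstacle: the Riesz kernel $|x-y|^{-1}$ is only barely non-integrable against an $L^2$ density near the diagonal in $\R^2$, so the gain from $L^2$ to $L^3_\loc$ must be extracted locally, and one must keep careful track of the dependence of every constant on $x_0$ in order to convert the single global bound $\rho_V\in L^{3/2}(\R^2)$ into pointwise decay.
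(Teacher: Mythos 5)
Your proposal shares the paper's skeleton — start from the Euler--Lagrange identity $u_V = V - U_{\rho_V}$, establish H\"older regularity of the Riesz potential $U_{\rho_V}$, then iterate once when $\alpha>1/3$ — but where the paper works \emph{globally}, you work locally, and that is exactly what leaves a genuine gap in the decay conclusion. The paper's key move is the algebraic expansion $|\rho_V|=(V-U_{\rho_V})^2=V^2-2VU_{\rho_V}+U_{\rho_V}^2$, estimated term-by-term on all of $\R^2$: from $V\in L^4(\R^2)\cap L^\infty(\R^2)$ and $U_{\rho_V}\in L^6(\R^2)$ one reads off $\rho_V\in L^{3/2}(\R^2)\cap L^3(\R^2)$ \emph{globally}, and a single appeal to the global H\"older estimate for Riesz potentials of $L^{3/2}\cap L^3$ functions (the cited du Plessis / LMM result) gives $U_{\rho_V}\in C^{1/3}(\R^2)$ with a uniform modulus; since $V$ and $U_{\rho_V}$ both vanish at infinity, so does $\rho_V$, with no extra work. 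This also makes your local bootstrap and near/far splitting unnecessary for the regularity claim.

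You flag the decay step as "the genuine obstacle," and indeed the sketch you give does not close. (i) The far part $\int_{|y-x_0|>2}|\rho_V(y)|\,|x-y|^{-1}\ud^2 y$ for $x\in B_1(x_0)$ is bounded by $\|\rho_V\|_{L^{3/2}(\R^2)}\,\bigl\||z|^{-1}\chi_{\{|z|>1\}}\bigr\|_{L^3(\R^2)}$, a \emph{constant} that does not tend to zero as $|x_0|\to\infty$. (ii) The near part cannot be paired against $\|\rho_V\|_{L^{3/2}(B_2(x_0))}$, since $|z|^{-1}\notin L^3(B_2)$; you need $\rho_V$ in some $L^p$ with $p>2$ on that ball with a bound that tends to zero, and the local bootstrap only returns $L^q_\loc$ control that itself depends on $U_{\rho_V}$ near $x_0$ — which brings back the far-part problem. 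The fix is already implicit in your first paragraph: $V\in L^4\cap L^\infty\subset L^6(\R^2)$ and $U_{\rho_V}\in L^6(\R^2)$ give $u_V\in L^6(\R^2)$, hence $\rho_V=|u_V|u_V\in L^3(\R^2)$ \emph{globally}. Once you have $\rho_V\in L^{3/2}(\R^2)\cap L^3(\R^2)$, the global Riesz-potential H\"older estimate yields $U_{\rho_V}\in C^{1/3}(\R^2)$ together with $U_{\rho_V}(x)\to 0$, and the decay of $\rho_V$ follows at once — exactly the paper's route.
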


\proof
According to \eqref{eq:EL-weak+0}, the minimizer $\rho_V\in \mathcal H_0$ satisfies
\begin{equation}
  \sgn(\rho_V)|\rho_V|^{1/2}=V-U_{\rho_V}\quad\text{in $\mathcal
    D'(\R^2)$}.
\end{equation}
Since 
$\rho_V\in \mathcal H_0\subset L^{3/2}(\R^2)$,
by the HLS-inequality \eqref{HLS} with $s=3/2$ we have 
\begin{equation}
U_{\rho_V}\in L^6(\R^2),
\end{equation}
and in particular, the potential $U_\rho$
could be identified with the Riesz potential of $\rho$ as in \eqref{Riesz-ae}.

Also, by the Sobolev inequality \eqref{sobolev-inequality}, 
\begin{equation}
V\in \HH \cap C^{\alpha}(\R^2)\subset L^4(\R^2)\cap
C^{\alpha}(\R^2).
\end{equation}
This implies  
\begin{equation}
V^2\in L^2(\R^2)\cap C^{\alpha}(\R^2).
\end{equation}
In particular, both $V$ and $V^2$ are bounded and decay to zero as $|x|\to \infty$.
Note also that $U_{\rho_V}^2\in L^3(\R^2)$.
Hence, 
\begin{equation}
|\rho_V|=(V-U_{\rho_V})^2=V^2-2V U_{\rho_V}+U_{\rho_V}^2\in L^{3/2}(\R^2)\cap L^3(\R^2).
\end{equation}
Furthermore, by H\"older estimates on Riesz potentials, we conclude that $U_{\rho_V}\in C^{1/3}(\R^2)$, see \cite{LMM}*{Lemma 4.1} or \cite{duPlessis}*{Theorem 2}. Then
\begin{equation}\label{e-beta}
|\rho_V|=(V-U_{\rho_V})^2\in C^{\beta}(\R^2),
\end{equation}
where $\beta=\min\{\alpha,1/3\}$ and $\rho_V(x) \to 0$ as $|x| \to \infty$.
If $\alpha\le 1/3$ we are done. If $\alpha>1/3$ then \eqref{e-beta} implies  $U_{\rho_V}\in C^{1,1/3}(\R^2)$, see \cite{Silvestre}*{Proposition 2.8}. Therefore, $\rho_V$ has at least the same H\"older regularity as $V$.
\qed

\begin{remark}
Similarly, one can establish higher H\"older regularity of $\rho_V$ assuming higher regularity of $V$.
For instance, using \cite{Silvestre}*{Proposition 2.8} we can conclude that if $V\in C^{1,\alpha}(\R^2)$ then
$\rho_V\in C^{1,\beta}(\R^2)$, where $\beta=\min\{\alpha,1/3\}$. However, in general the H\"older regularity of $\rho_V$ can not be improved beyond the H\"older regularity of $V$.
\end{remark}

\section{Positivity and decay} \label{s:pos}

\subsection{Half-Laplacian representation, positivity and comparison}
\label{sec:half}

Let $\rho_V\in\H_0$ be the minimizer of $\E_0$. Introduce the substitution
\begin{align}\label{e-Phiu0}
u_V:=\sgn(\rho_V)|\rho_V|^{1/2}.
\end{align}
Then $\rho_V=|u_V|u_V$ and \eqref{e-EL0} transforms into
\begin{align}\label{e-EL-00}
  \int_{\R^2} u_V(x)\varphi(x)\ud^2 x - \langle \varphi, V
  \rangle + \langle 
  U_{|u_V|u_V},\varphi\rangle_{\HH}=0\quad\forall\varphi\in \H_0. 
\end{align}

\begin{proposition}[Equivalent fractional PDE]\label{pPDE0}
  Let $V\in \HH$ and $u_V$ be defined by \eqref{e-Phiu0}. Then
  $u_V\in\HH$ and is the unique solution of the problem
	\begin{align}\label{ELu0}
          (-\Delta)^{1/2} u + |u|u =  (-\Delta)^{1/2} V \quad
          \text{in $\HH$.}
	\end{align}
\end{proposition}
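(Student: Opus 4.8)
The plan is to show that the substitution $u_V := \sgn(\rho_V)|\rho_V|^{1/2}$ converts the weak Euler--Lagrange identity \eqref{e-EL-00} into the statement that $u_V$ is a weak solution of \eqref{ELu0} in $\HH$, and then to argue uniqueness from strict monotonicity. The first task is to verify $u_V \in \HH$. From \eqref{eq:EL-weak+0} we have the distributional identity $u_V = V - U_{\rho_V}$; since $V \in \HH$ by hypothesis and $U_{\rho_V} \in \HH$ is the Riesz potential of $\rho_V \in \HHH$ (the potential furnished by \eqref{weak-potential}), we get $u_V \in \HH$ directly as a difference of two elements of $\HH$. This is the cleanest route and avoids having to estimate Gagliardo seminorms of $\sgn(\rho_V)|\rho_V|^{1/2}$ by hand.

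Next I would rewrite \eqref{e-EL-00}. Observe that $\langle U_{|u_V|u_V}, \varphi \rangle_\HH = \langle U_{\rho_V}, \varphi \rangle_\HH$, and by the defining property \eqref{weak-potential} of the potential together with \eqref{isometry} this equals $\langle \rho_V, \varphi \rangle$, which we may also write as $\langle (-\Delta)^{1/2} U_{\rho_V}, \varphi \rangle$ in the weak sense of \eqref{weak-Laplace}. Likewise $\int_{\R^2} u_V \varphi \,\ud^2 x$ pairs the $L^{3/2}$-type term, and $\langle \varphi, V \rangle$ is, by definition of the $\HH$ inner product and \eqref{weak-potential} applied to $(-\Delta)^{1/2}V$, exactly $\langle (-\Delta)^{1/2} V, \varphi \rangle$. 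So \eqref{e-EL-00} says
\begin{align*}
  \langle (-\Delta)^{1/2} u_V + |u_V| u_V - (-\Delta)^{1/2} V, \varphi \rangle = 0 \qquad \forall \varphi \in \H_0.
\end{align*}
Since $C^\infty_c(\R^2) \subset \H_0$ is dense in $\HH$, and since each term above defines a bounded linear functional on $\HH$ (for the nonlinear term one uses $|u_V| u_V = \rho_V \in \HHH$, for the others the remarks just made), the identity extends to all test functions in $\HH$, which is precisely the assertion that $u_V$ solves \eqref{ELu0} in $\HH$. A small point to be careful about: one should check that $\H_0$ is dense in $\HH$ in the appropriate sense, or simply test against $C^\infty_c(\R^2)$ and extend by continuity — either works, but I would state it explicitly.

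For uniqueness, suppose $u_1, u_2 \in \HH$ both solve \eqref{ELu0}. Subtracting the two weak formulations and testing with $\varphi = u_1 - u_2 \in \HH$ gives $\|u_1 - u_2\|_\HH^2 + \int_{\R^2} (|u_1|u_1 - |u_2|u_2)(u_1 - u_2)\,\ud^2 x = 0$. The integrand is pointwise nonnegative since $t \mapsto |t|t$ is monotone increasing, and $\|u_1-u_2\|_\HH^2 \ge 0$, forcing $u_1 = u_2$; one must first note that the product $(|u_1|u_1 - |u_2|u_2)(u_1-u_2)$ is integrable, which follows from $u_i \in \HH \subset L^4(\R^2)$ so that $|u_i|u_i \in L^2$ and $u_1 - u_2 \in L^2$ — wait, that is not quite right since $L^4$ does not embed in $L^2$ on $\R^2$; instead use $|u_i|u_i = \rho_i \in \HHH$ and $u_1-u_2 \in \HH$, so the pairing is the $\HHH$--$\HH$ duality and is finite. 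I expect this integrability bookkeeping — and more generally keeping straight which pairing ($L^{3/2}$--$L^3$, $\HHH$--$\HH$, or Lebesgue) is in force for each term — to be the only real subtlety; the algebra of the substitution itself is routine. An alternative uniqueness argument, perhaps cleaner to present, is to note that any solution of \eqref{ELu0} in $\HH$ gives rise via $\rho := |u|u$ to a critical point of the strictly convex functional $\E_0$, hence to the unique minimizer, so $u = u_{V}$.
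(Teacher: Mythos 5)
Your route to $u_V \in \HH$ differs from the paper's and is arguably cleaner: you read off membership directly from the distributional identity \eqref{eq:EL-weak+0}, writing $u_V = V - U_{\rho_V}$ with both $V$ and $U_{\rho_V}$ in $\HH$. The paper instead tests \eqref{e-EL-00} with $\varphi = (-\Delta)^{1/2}\psi$ for $\psi\in C^\infty_c(\R^2)$ (first checking that such $\varphi$ lies in $L^{4/3}\cap L^1\subset\H_0$, and using \eqref{Riesz-dual} to turn the $\HHH$ inner product into $\int_{\R^2}|u_V|u_V\psi$), derives $(-\Delta)^{1/2}u_V + |u_V|u_V = (-\Delta)^{1/2}V$ in $\mathcal D'(\R^2)$, and only then concludes $u_V\in\HH$ because the right-hand side minus the nonlinear term lies in $\HHH$. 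Note that once you have $u_V = V - U_{\rho_V}$ in $\HH$, applying $(-\Delta)^{1/2}$ and using $(-\Delta)^{1/2}U_{\rho_V}=\rho_V=|u_V|u_V$ already gives \eqref{ELu0}, so the rewriting and density discussion in your second paragraph can be dropped entirely. Where your proposal is not quite tight is uniqueness. The paper deduces it from the Comparison Principle (Lemma~\ref{l-comparison0}), whose hypotheses explicitly require $u,v\in\HH\cap L^3(\R^2)$: the $L^3$ membership is precisely what makes $(|u_1|u_1-|u_2|u_2)(u_1-u_2)$ a genuine $L^1$ function (H\"older with exponents $3/2$ and $3$), so that its pointwise nonnegativity controls the sign of the Lebesgue integral. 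Your patch --- reading the term as an abstract $\HHH$--$\HH$ pairing --- does not close that gap, because pointwise nonnegativity of an integrand does not transfer to an abstract pairing which need not coincide with the Lebesgue integral (this is exactly the subtlety the paper emphasizes following \eqref{phi-linear}). Your alternative via strict convexity of $\E_0$ is a nice variant, but it also requires $\rho:=|u|u\in L^{3/2}(\R^2)$, i.e.\ $u\in L^3(\R^2)$, before $\rho$ belongs to $\H_0$; this holds for the constructed $u_V$ (since $\rho_V\in L^{3/2}$) but must be stated or established for a competing solution $u\in\HH$.
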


\proof 
Let $\psi\in C^\infty_c(\R^2)$.  Then
$(-\Delta)^{1/2} \psi\in C^\infty(\R^2)\cap L^1(\R^2)\subset L^{4/3}\cap L^1(\R^2)\subset\H_0$
\cite{Silvestre}*{Section 2.1}.  Test \eqref{e-EL-00} with
$\varphi = (-\Delta)^{1/2} \psi$ and take into
account that in view of \eqref{Riesz-dual}, 
\begin{align}\label{Riesz-inv-0}
\langle|u_V|u_V,\varphi\rangle_\HHH=&
\langle U_{|u_V|u_V},(-\Delta)^{1/2}\psi\rangle_{\HH}\\
=&\int_{\R^2}|u_V|u_V(x)\psi(x)\ud^2 x\quad\forall\psi\in C^\infty_c(\R^2).
\end{align}
Then \eqref{e-EL-00} yields
\begin{align}\label{e-EL0+}
	\int_{\R^2} u_V(-\Delta)^{1/2}\psi\ud^2 x -
	\langle (-\Delta)^{1/2}\psi, V \rangle +  \int_{\R^2}|u_V|u_V(x)\psi(x)\ud^2 x=0\quad\forall\psi\in C^\infty_c(\R^2),
\end{align}
or equivalently,
\begin{align}
\label{ELu+0}
  (-\Delta)^{1/2} u_V -  (-\Delta)^{1/2} V  + |u_V|u_V = 0 \quad  \text{in}
  \ \mathcal D'(\R^2),
\end{align}
where $(-\Delta)^{1/2} V\in \HHH$, $|u_V|u_V=\rho_V\in \HHH$.  Hence $u_V\in \HH$, and \eqref{ELu+0}
also holds weakly in $\HH$ by density.

The uniqueness for \eqref{ELu0} follows from the Comparison Principle of Lemma \ref{l-comparison0} below.
\qed

\begin{proposition}[Positivity]\label{pPDEplus}
  Let $V\in \HH$. Assume that $(-\Delta)^{1/2}V\ge 0$ in $\R^2$.
  Then $u_V\ge 0$ in $\R^2$. If, in addition $V\neq 0$ then $u_V\neq 0$.
\end{proposition}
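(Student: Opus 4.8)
The plan is to test the fractional PDE satisfied by $u_V$ against its own negative part $u_V^-$ and to play off the sign of the quadratic form $\langle u_V^+,u_V^-\rangle_{\HH}\le 0$ against the sign of the cubic term. By Proposition \ref{pPDE0} we have $u_V\in\HH$, and by \eqref{eq:EL-weak+0} (or directly from \eqref{e-EL-00}) the identity
\begin{align*}
u_V=V-U_{\rho_V}\qquad\text{in }\HH,\qquad \rho_V=|u_V|u_V,
\end{align*}
where $U_{\rho_V}\in\HH$ is the potential of $\rho_V$. Since $\rho_V\in\H_0\subset L^{3/2}(\R^2)$, we have $\int_{\R^2}|u_V|^{3}\ud^2x=\int_{\R^2}|\rho_V|^{3/2}\ud^2x<\infty$, so $u_V\in L^3(\R^2)$ and hence $u_V^-\in\HH\cap L^3(\R^2)$.

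First I would extract the two sign ingredients. The hypothesis $(-\Delta)^{1/2}V\ge 0$ means $\langle (-\Delta)^{1/2}V,\varphi\rangle\ge 0$ for every $0\le\varphi\in C^\infty_c(\R^2)$; approximating $u_V^-\ge 0$ in $\HH$ by nonnegative functions in $C^\infty_c(\R^2)$ (mollification followed by a smooth cut-off, both of which preserve nonnegativity and are continuous on $\HH$) and using continuity of the $\HHH$--$\HH$ pairing gives
\begin{align*}
\langle V,u_V^-\rangle_{\HH}=\langle (-\Delta)^{1/2}V,u_V^-\rangle\ge 0.
\end{align*}
Next, the key computation: $\rho_V\in L^{3/2}(\R^2)\cap L^1_{\loc}(\R^2)$ is a regular distribution in $\HHH$, so $\langle\rho_V,\cdot\rangle$ is the continuous extension to $\HH$ of $\varphi\mapsto\int_{\R^2}\rho_V\varphi\,\ud^2x$; approximating $u_V^-$ by $\psi_n\in C^\infty_c(\R^2)$ \emph{simultaneously} in $\HH$ and in $L^3(\R^2)=(L^{3/2}(\R^2))'$ (again mollify, then cut off) and using H\"older's inequality with exponents $\tfrac32,3$,
\begin{align*}
\langle U_{\rho_V},u_V^-\rangle_{\HH}=\langle\rho_V,u_V^-\rangle=\lim_{n\to\infty}\int_{\R^2}\rho_V\psi_n\,\ud^2x=\int_{\R^2}\rho_V\,u_V^-\,\ud^2x=-\int_{\R^2}(u_V^-)^3\,\ud^2x,
\end{align*}
where the last equality uses $\rho_V\,u_V^-=|u_V|u_V\,u_V^-=-(u_V^-)^3$ pointwise.

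Now I would pair $u_V=V-U_{\rho_V}$ with $u_V^-$ in $\HH$. Writing $\langle u_V,u_V^-\rangle_{\HH}=\langle u_V^+,u_V^-\rangle_{\HH}-\|u_V^-\|_{\HH}^2$ and inserting the two displays above,
\begin{align*}
\langle u_V^+,u_V^-\rangle_{\HH}-\|u_V^-\|_{\HH}^2=\langle V,u_V^-\rangle_{\HH}+\int_{\R^2}(u_V^-)^3\,\ud^2x.
\end{align*}
The left-hand side is $\le 0$ (because $\langle u_V^+,u_V^-\rangle_{\HH}\le 0$) while the right-hand side is $\ge 0$; hence both sides vanish, and in particular $\int_{\R^2}(u_V^-)^3\,\ud^2x=0$, i.e.\ $u_V^-\equiv 0$, so $u_V\ge 0$. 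Finally, if $u_V\equiv 0$ then $\rho_V=|u_V|u_V\equiv 0$, and therefore $V=u_V+U_{\rho_V}=0$; contrapositively, $V\neq 0$ forces $u_V\neq 0$.

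The main obstacle I anticipate is the middle step --- identifying the $\HHH$--$\HH$ pairing $\langle\rho_V,u_V^-\rangle$ with the genuine Lebesgue integral $-\int_{\R^2}(u_V^-)^3\,\ud^2x$. This is precisely the delicate point highlighted in Section \ref{sec:setting0} (a regular distribution in $\HHH$ need not integrate against every element of $\HH$, cf.\ Cartan's example), and what rescues it here is only that $\rho_V\in L^{3/2}(\R^2)$ while $u_V^-$ lies in the dual exponent $L^3(\R^2)$; the simultaneous approximation of $u_V^-$ in $\HH$ and $L^3(\R^2)$ must be carried out with some care. An alternative, once the weak maximum/comparison principle for \eqref{ELu0} (Lemma \ref{l-comparison0}) is available, is immediate: $0$ is a subsolution since $(-\Delta)^{1/2}0+|0|\,0=0\le(-\Delta)^{1/2}V$, so comparison yields $u_V\ge 0$ directly, and the second assertion follows as above.
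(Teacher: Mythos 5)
Your argument is correct and is essentially the same as the paper's: test the equation against $u_V^-$, use $\langle u_V^+,u_V^-\rangle_{\HH}\le 0$ together with $\int u_V|u_V|u_V^-\,\ud^2 x=-\int (u_V^-)^3\,\ud^2 x\le 0$ and $\langle V,u_V^-\rangle_{\HH}\ge 0$ to force $u_V^-=0$; you merely spell out more carefully why the $\HHH$--$\HH$ pairing of $\rho_V$ with $u_V^-$ reduces to a Lebesgue integral (via $\rho_V\in L^{3/2}$ and $u_V^-\in L^3$). Your final observation that the result also follows instantly from Lemma \ref{l-comparison0} by comparing with the subsolution $0$ on $\Omega=\R^2$ is likewise valid.
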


\proof Decompose $u_V=u_V^+-u_V^-$ and recall that
$u_V^+,u_V^-\in \HH$ and $\langle u_V^+,u_V^-\rangle_\HH\le
0$. Testing \eqref{ELu0} by $u_V^-\ge 0$ and taking into
account that $u_V|u_V|u_V^-\le 0$, we obtain
\begin{align}
  0\le \langle V,u_V^-\rangle_\HH=\langle
  u_V,u_V^-\rangle_\HH+\int_{\R^2}u_V|u_V|\,u_V^-\ud^2 x 
  \le-\langle u_V^-,u_V^-\rangle_\HH\le 0.
\end{align}
We conclude that $u_V^-= 0$.

Further, if $V\neq 0$ then $u=0$ is not a solution of \eqref{ELu0} and hence $u_V\neq 0$.
\qed

\begin{lemma}[Comparison Principle]\label{l-comparison0}
  Let $V\in \HH$.  Assume that $u,v\in \mathring{H}^{1/2}(\R^2)\cap L^3(\R^2)$
  are a super and a subsolution to \eqref{ELu0} in a smooth domain
    $\Omega\subseteq\R^2$, respectively, i.e.,
	\begin{align}
	\label{ELu-comp0}
          (-\Delta)^{1/2} u + u |u| \ge  (-\Delta)^{1/2} V  \quad
          \text{in}  \ \mathcal D'(\Omega),\\ 
          (-\Delta)^{1/2} v + v |v| \le  (-\Delta)^{1/2} V  \quad
          \text{in}  \ \mathcal D'(\Omega). 
	\end{align}
	If $\R^2\setminus\Omega\neq\varnothing$, we also assume $u\ge v$ in $\R^2\setminus\bar\Omega$.  Then $u\ge v$ in
        $\R^2$. 
\end{lemma}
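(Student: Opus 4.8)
The plan is to test the difference of the two inequalities against the positive part of $w:=v-u$ and exploit the strict monotonicity of the nonlinearity $t\mapsto t|t|$. Set $w = v - u \in \mathring H^{1/2}(\R^2)\cap L^3(\R^2)$. Subtracting the subsolution inequality for $v$ from the supersolution inequality for $u$ gives, in $\mathcal D'(\Omega)$,
\begin{equation}
  (-\Delta)^{1/2} w + \big(v|v| - u|u|\big) \le 0 .
\end{equation}
I would like to test this with $\varphi = w^+ \ge 0$. The first point to check is that $w^+$ is an admissible test function: since $w\in\mathring H^{1/2}(\R^2)$ we have $w^+\in\mathring H^{1/2}(\R^2)$, and the hypothesis $u\ge v$ on $\R^2\setminus\bar\Omega$ forces $w^+$ to be supported in $\bar\Omega$, so it can be approximated (in the $\mathring H^{1/2}$ norm) by functions in $C^\infty_c(\Omega)$ — this is where the boundary condition and smoothness of $\Omega$ enter, and it is the step I expect to require the most care (when $\Omega=\R^2$ it is immediate). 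Having justified the pairing, I obtain
\begin{equation}
  \langle w, w^+\rangle_{\oH^{1/2}(\R^2)} + \int_{\R^2}\big(v|v| - u|u|\big)\,w^+\,\ud^2 x \le 0 .
\end{equation}

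Now I control the two terms from below. Writing $w = w^+ - w^-$ and using $\langle w^+, w^-\rangle_{\oH^{1/2}(\R^2)}\le 0$ recalled in Section~\ref{sec:func}, the quadratic form term satisfies
\begin{equation}
  \langle w, w^+\rangle_{\oH^{1/2}(\R^2)} = \|w^+\|_{\oH^{1/2}(\R^2)}^2 - \langle w^-, w^+\rangle_{\oH^{1/2}(\R^2)} \ge \|w^+\|_{\oH^{1/2}(\R^2)}^2 .
\end{equation}
For the nonlinear term, note that on the set $\{w>0\}=\{v>u\}$ we have $v|v| - u|u| > 0$ by strict monotonicity of $t\mapsto t|t|$ on $\R$, while $w^+ = 0$ elsewhere; hence $\big(v|v|-u|u|\big)w^+ \ge 0$ pointwise a.e. (the integrand is well-defined and integrable because $u,v\in L^3$ give $v|v|-u|u|\in L^{3/2}$ and $w^+\in L^3$). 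Combining, both terms on the left are nonnegative yet sum to something $\le 0$, so $\|w^+\|_{\oH^{1/2}(\R^2)} = 0$, whence $w^+$ is constant; since $w^+\in\mathring H^{1/2}(\R^2)\subset L^4(\R^2)$ the only possibility is $w^+\equiv 0$, i.e. $v\le u$ in $\R^2$, as claimed.

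The only genuinely delicate point is the density argument legitimizing $w^+$ as a test function against the distributional inequalities when $\Omega\subsetneq\R^2$: one must show that a nonnegative $\mathring H^{1/2}$ function vanishing outside $\bar\Omega$ is a limit in $\mathring H^{1/2}$ of elements of $C^\infty_c(\Omega)$, for which the smoothness of $\partial\Omega$ is used (via a standard truncation-and-mollification together with the fact that $w^+$ has no mass on the negligible set $\partial\Omega$); alternatively, one can first establish the result for $\Omega=\R^2$ and then localize. Everything else is a routine sign-chasing exploiting the positivity $\langle u^+,u^-\rangle_{\oH^{1/2}}\le 0$ and the strict monotonicity of the nonlinearity.
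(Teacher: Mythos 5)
Your proof is correct and follows essentially the same route as the paper: subtract the two inequalities, test with $(v-u)^+$, use the inequality $\langle(v-u)^+,(v-u)^-\rangle_{\oH^{1/2}(\R^2)}\le 0$ together with the monotonicity of $t\mapsto t|t|$, and conclude $(v-u)^+\equiv 0$. The density step you flagged as the delicate point is exactly the one the paper addresses by introducing $H^{1/2}_0(\Omega)$ and invoking \cite{AdamsHedberg}*{Theorem 10.1.1} to ensure $(v-u)^+\in H^{1/2}_0(\Omega)$.
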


\proof
Subtracting one inequality from another, we obtain
\begin{align}\label{ELu20}
  (-\Delta)^{1/2} (v-u) + v|v| - u|u| \le 0  \quad\text{in}  \
  \mathcal D'(\Omega). 
\end{align}
Let $H^{1/2}_0(\Omega)$ denotes the completion of $C^\infty_c(\Omega)$ wrt the Gagliardo's norm $\|\cdot\|_{\oH^{1/2}(\R^2)}^2$, defined in \eqref{Gagliardo}. With this definition, $H^{1/2}_0(\Omega)$
is automatically a closed subspace of $H^{1/2}_0(\R^2)$.
By density, \eqref{ELu20} is also valid in $H^{1/2}_0(\Omega)$, in the sense that
\begin{equation}\label{ELuH0}
  \langle v-u,\varphi\rangle_\HH+ \int_{\R^2}(v|v| - u|u|)\varphi\ud^2
  x\le 0\quad\forall\,0\leq \varphi\in H^{1/2}_0(\Omega).
\end{equation}
Note that $(v-u)^+\in \HH$. If
$\R^2\setminus\Omega\neq\varnothing$ then $u\ge v$ in
$\R^2\setminus\bar\Omega$ and hence $(v-u)^+=0$ in
$\R^2\setminus\bar\Omega$. This implies $(v-u)^+\in H^{1/2}_0(\Omega)$,
see e.g.~\cite{AdamsHedberg}*{Theorem 10.1.1}. Testing \eqref{ELuH0}
by $(v-u)^+$, taking into account
$\langle(v-u)^-,(v-u)^+\rangle_\HH\le 0$ and monotone increase of the
nonlinearity, we obtain
\begin{multline}
  0\ge\langle v-u,(v-u)^+\rangle_\HH+\int_{\R^2}(v|v| - u|u|)(v-u)^+\ud^2 x\\
  \ge \langle(v-u)^+,(v-u)^+\rangle_\HH=\|(v-u)^+\|_{ \mathring{H}^{1/2}(\R^2)}^2.
\end{multline}
We conclude that $(v-u)^+= 0$. 
\qed

The Comparison Principle immediately implies that \eqref{ELu0} can
have at most one solution in $\HH$.  Hence the solution $u_V$
constructed from the minimizer $\rho_V$ via \eqref{e-Phiu0} is the
unique solution of \eqref{ELu0}. A consequence of the uniqueness is
the following.

\begin{corollary}\label{c-monotone}
  Assume that $V\in \HH$ and $(-\Delta)^{1/2} V\ge 0$ in $\R^2$.  If $(-\Delta)^{1/2} V\in L^{4/3}(\R^2)$ is a radially symmetric non-increasing function then $u_V$ is also radially symmetric and
  non-increasing.
\end{corollary}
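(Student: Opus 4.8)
The plan is to exploit the uniqueness of the solution $u_V$ to \eqref{ELu0}, established via the Comparison Principle in Lemma \ref{l-comparison0}, together with the invariance of the problem under rotations and, for the monotonicity, under reflections. First I would observe that the hypotheses are rotation-invariant: if $(-\Delta)^{1/2}V$ is radially symmetric, then for any rotation $R \in O(2)$ the function $V \circ R$ satisfies $(-\Delta)^{1/2}(V\circ R) = ((-\Delta)^{1/2}V)\circ R = (-\Delta)^{1/2}V$, and $V \circ R \in \HH$ since the Gagliardo seminorm is rotation-invariant. Hence $u_V \circ R$ solves \eqref{ELu0} with the same right-hand side and lies in $\HH$; by the uniqueness statement following Lemma \ref{l-comparison0} we conclude $u_V \circ R = u_V$ for every $R \in O(2)$, i.e.\ $u_V$ is radially symmetric. (One subtlety: to invoke uniqueness one needs $u_V \circ R \in \HH$, which is immediate, and that $\rho_V\circ R$ is still the minimizer, which follows from rotation-invariance of $\E_0$ — but since uniqueness for \eqref{ELu0} alone already forces $u_V\circ R = u_V$, this is enough.)

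For the monotonicity, the natural approach is the moving plane method adapted to the fractional setting, but a cleaner route here is a direct comparison/reflection argument using Lemma \ref{l-comparison0}. Fix a unit vector $e$ and, for $\lambda > 0$, let $T_\lambda = \{x : x\cdot e = \lambda\}$ be the hyperplane, $\Sigma_\lambda = \{x : x\cdot e > \lambda\}$ the half-space, and $x^\lambda$ the reflection of $x$ across $T_\lambda$. Set $v(x) := u_V(x^\lambda)$ for $x \in \Sigma_\lambda$. Because $(-\Delta)^{1/2}V$ is radially non-increasing, one checks that $(-\Delta)^{1/2}V(x^\lambda) \le (-\Delta)^{1/2}V(x)$ for $x \in \Sigma_\lambda$ (reflection across a hyperplane not through the origin moves points closer to the origin), so $v$ is a subsolution to \eqref{ELu0} in $\Sigma_\lambda$ in the sense that $(-\Delta)^{1/2}v + v|v| = (-\Delta)^{1/2}V(\cdot^\lambda) \le (-\Delta)^{1/2}V$ — here one must be careful that the fractional Laplacian is nonlocal, so $(-\Delta)^{1/2}$ of the reflected function is \emph{not} simply the reflection of $(-\Delta)^{1/2}u_V$; one uses instead the standard decomposition of $(-\Delta)^{1/2}$ into a local part on $\Sigma_\lambda$ plus a nonlocal tail, and the reflection-monotonicity of the kernel together with the fact (to be bootstrapped) that $u_V(x) \ge u_V(x^\lambda)$ in $\Sigma_\lambda$. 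On the boundary and exterior, $v = u_V$ on $T_\lambda$. Applying Lemma \ref{l-comparison0} on $\Omega = \Sigma_\lambda$ — after verifying $u_V, v \in \HH \cap L^3(\R^2)$, which holds since $u_V \in \HH$ and, from $\rho_V \in L^{4/3}(\R^2)$ via \eqref{Riesz-quad}/HLS when $(-\Delta)^{1/2}V \in L^{4/3}$, one gets $u_V \in L^{8/3}(\R^2)$, and more regularity from Lemma \ref{lem:regularity}-type arguments if needed to reach $L^3$ — yields $u_V \ge v$ in $\Sigma_\lambda$, i.e.\ $u_V(x) \ge u_V(x^\lambda)$ whenever $x\cdot e > \lambda > 0$. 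Letting $\lambda \downarrow 0$ and using arbitrariness of $e$ gives that $u_V$ is radially non-increasing, hence so is $\rho_V = |u_V|u_V$.

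The main obstacle I expect is the fractional nonlocality in the reflection step: unlike the classical Laplacian, $(-\Delta)^{1/2}(u_V\circ \text{reflection})$ does not reduce to a reflected copy of $(-\Delta)^{1/2}u_V$, so one cannot directly assert that the reflected function is a sub/supersolution. The standard fix — used in fractional moving-plane arguments (e.g.\ building on the kernel identities in the style of the caloric/harmonic extension or the explicit singular integral \eqref{half-Laplace}) — is to split the defining integral over $\Sigma_\lambda$ and its complement, exploit the symmetry $|x - y| = |x^\lambda - y^\lambda|$ and the inequality $|x - y^\lambda| \ge |x - y|$ for $x, y \in \Sigma_\lambda$, and run the comparison argument directly on the difference $w := (v - u_V)^+$, testing the weak inequality by $w$ as in the proof of Lemma \ref{l-comparison0}; the extra nonlocal term that arises has a favorable sign precisely because of the kernel monotonicity. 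This requires a modest amount of care but no new ideas beyond those already present in Lemma \ref{l-comparison0}, so I would present it as a variant of that lemma's proof rather than re-deriving everything. Alternatively, one may reduce to the case of continuous $u_V$ via the regularity afforded by $(-\Delta)^{1/2}V \in L^{4/3}(\R^2)$ (which, after the bootstrapping in Lemma \ref{lem:regularity}, gives enough pointwise control to run the argument classically), and then invoke an off-the-shelf fractional symmetry result; citing \cite{Silvestre} or the fractional moving-plane literature would shorten the exposition considerably.
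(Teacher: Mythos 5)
Your approach is genuinely different from the paper's, and only partially complete. The paper's proof is a two-line rearrangement argument: it notes that $u_V$ is the unique minimizer of the convex functional $J_V(u)=\tfrac12\|u\|_\HH^2+\tfrac13\|u\|_{L^3}^3-\langle u,V\rangle_\HH$ on $\HH\cap L^3(\R^2)$, that the hypothesis $(-\Delta)^{1/2}V\in L^{4/3}(\R^2)$ turns $\langle u,V\rangle_\HH$ into the Lebesgue integral $\int u\,(-\Delta)^{1/2}V$, and that, since $(-\Delta)^{1/2}V$ equals its own symmetric-decreasing rearrangement, replacing $u_V$ by $u_V^*$ does not increase $J_V$ (fractional P\'olya--Szeg\H{o} for the $\HH$ term, norm preservation for the $L^3$ term, and the Hardy--Littlewood/Riesz rearrangement inequality for the coupling, via Lieb--Loss Theorem~3.4 and Lemma~7.17). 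Uniqueness then forces $u_V=u_V^*$, which gives both radial symmetry and monotonicity simultaneously.

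Your rotation-invariance argument for radial symmetry is correct and self-contained, and it does faithfully recover one half of the conclusion from the uniqueness already established. The monotonicity half, however, is where the proposal has a real gap. As you sketch it, the reflection-comparison step is circular: to show that $v(x)=u_V(x^\lambda)$ is a subsolution in $\Sigma_\lambda$ you rely on the unproved pointwise inequality $u_V(y)\ge u_V(y^\lambda)$ for $y\in\Sigma_\lambda$ to control the nonlocal tail of $(-\Delta)^{1/2}v$, which is precisely the conclusion you are after. Resolving this requires the full apparatus of the fractional moving-plane method (an \emph{antisymmetric} narrow-region/maximum principle, a starting position for large $\lambda$ obtained from decay of $u_V$, and a continuity argument in $\lambda$), none of which reduces to a ``variant of Lemma~\ref{l-comparison0}'' applied on a half-plane: Lemma~\ref{l-comparison0} compares a sub- and a supersolution of the \emph{same} right-hand side, whereas here the reflected function solves an equation with a \emph{different} (reflected) datum, and the favorable sign of the extra nonlocal term is exactly what must be extracted, not assumed. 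Citing an off-the-shelf fractional moving-plane theorem would indeed close the gap, but that makes the argument considerably heavier than, and logically independent of, the paper's rearrangement proof; the hypothesis $(-\Delta)^{1/2}V\in L^{4/3}(\R^2)$ is present in the statement precisely to make the rearrangement step legitimate, and your route does not exploit it in that way.
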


\proof Note that $u_V$ is the unique global minimizer of the convex energy
$$J_V(u)=\frac12\|u\|_\HH^2+\frac13\|u\|_{L^3(\R^2)}^3-\langle u,V\rangle_\HH$$ on $\HH\cap L^3(\R^2)$.
Since $(-\Delta)^{1/2} V\in L^{4/3}(\R^2)$, 
$$\langle u_V, V\rangle_\HH=\int_{\R^2}u_V (-\Delta)^{1/2} V \,\ud^2 x,$$
where the latter integral is finite by the HLS inequality.
Then the symmetric--decreasing rearrangement $u_V^*$ is also a minimizer of
$J_V$, by \cite{Lieb-Loss}*{Theorem 3.4 and Lemma 7.17}.  
Hence the assertion follows from the uniqueness of the minimizer.  
\qed

Another straightforward, but important consequence of the Comparison
Principle is the following upper bound on $u_V$.

\begin{corollary}\label{c-KO}
Assume that $V\in \HH$ and $V\ge 0$. Then
\begin{align}\label{e-global}
u_V\le V\quad\text{in $\R^2$.}
\end{align}
\end{corollary}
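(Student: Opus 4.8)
Proof plan for Corollary \ref{c-KO} ($u_V \le V$ when $V \ge 0$).

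The plan is to apply the Comparison Principle of Lemma \ref{l-comparison0} with the subsolution being $u_V$ itself and the supersolution being the constant-in-disguise choice $v = V$, taken on the whole domain $\Omega = \R^2$. First I would observe that $u_V \in \HH \cap L^3(\R^2)$: membership in $\HH$ is Proposition \ref{pPDE0}, and since $V \ge 0$ we are in the positivity regime, so $u_V \ge 0$ and $\rho_V = u_V^2 \in L^{3/2}(\R^2)$ forces $u_V \in L^3(\R^2)$. Next, $u_V$ solves \eqref{ELu0} exactly, hence is in particular a subsolution in $\mathcal D'(\R^2)$. It remains to check that $v = V$ is a supersolution of \eqref{ELu0} on $\R^2$, i.e. that
\begin{equation*}
  (-\Delta)^{1/2} V + V|V| \ge (-\Delta)^{1/2} V \quad \text{in } \mathcal D'(\R^2),
\end{equation*}
which reduces to $V|V| \ge 0$; this holds because $V \ge 0$, so $V|V| = V^2 \ge 0$ pointwise a.e., hence in the distributional sense. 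Since $\Omega = \R^2$ there is no exterior condition to impose ($\R^2 \setminus \Omega = \varnothing$), so Lemma \ref{l-comparison0} applies verbatim and yields $u_V \le V$ in $\R^2$.

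The only genuine subtlety is a regularity mismatch: Lemma \ref{l-comparison0} is stated for super/subsolutions in $\HH \cap L^3(\R^2)$, and one must confirm $V \in \HH \cap L^3(\R^2)$. We have $V \in \HH$ by hypothesis, and $\HH \subset L^4(\R^2)$ by \eqref{sobolev-embedd}, so $V \in L^4(\R^2)$. This is $L^4$, not $L^3$, so a direct appeal is not literally covered — but inspecting the proof of Lemma \ref{l-comparison0}, the role of the $L^3$ hypothesis is only to make the term $\int_{\R^2}(v|v| - u|u|)(v-u)^+\,\mathrm d^2x$ finite and to make the nonlinear term pair sensibly against $(v-u)^+ \in \HH \subset L^4$; with $v = V \in L^4$ and $u = u_V \in L^3 \cap L^4$ (the latter since $u_V \le$ something and $u_V \in \HH$), the integrand is controlled by $L^4$–$L^4$–$L^2$ Hölder after noting $(v-u)^+ \in \HH \cap L^2_{\mathrm{loc}}$ has compact enough decay; more cleanly, one simply notes that $(v-u)^+ = (V - u_V)^+ \in \HH$ and both $V^2, u_V^2 \in L^2(\R^2)$, so the product with $(v-u)^+ \in L^4(\R^2)$ is in $L^{4/3}(\R^2) \subset L^1_{\mathrm{loc}}$, and decays, so the integral is absolutely convergent. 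Thus the Comparison Principle argument goes through unchanged.

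I expect the main obstacle to be purely bookkeeping around this integrability point rather than anything structural: one needs to make sure the pairing $\langle v - u, (v-u)^+\rangle_\HH + \int (v|v| - u|u|)(v-u)^+$ is well-defined and that the monotonicity inequality $(v|v| - u|u|)(v-u)^+ \ge 0$ can be invoked. The latter is immediate from monotonicity of $t \mapsto t|t|$. So the corollary follows in two lines once the supersolution property $V^2 \ge 0$ is recorded; I would present it essentially as: ``Since $V \ge 0$, we have $(-\Delta)^{1/2}V + V|V| = (-\Delta)^{1/2}V + V^2 \ge (-\Delta)^{1/2}V$, so $V$ is a supersolution of \eqref{ELu0} on $\R^2$; as $u_V$ is a solution, Lemma \ref{l-comparison0} gives $u_V \le V$.''
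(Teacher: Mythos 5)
Your proof is essentially identical to the paper's: both simply observe that $V^2 \ge 0$ makes $V$ a supersolution of \eqref{ELu0} on all of $\R^2$ and then invoke the Comparison Principle of Lemma \ref{l-comparison0}. The integrability worry you raise is legitimate (Lemma \ref{l-comparison0} is stated for $\HH \cap L^3$ while $\HH$ only embeds in $L^4$, and the paper does not comment on this), but the cleanest way to close it is to note that the relevant integral $\int_{\R^2}(u_V|u_V| - V|V|)(u_V - V)^+\ud^2 x$ is nonnegative by monotonicity and bounded above by $\int_{\R^2} u_V^3 \ud^2 x < \infty$ (using $V \ge 0$ and $u_V \in L^3$), so the test step in the comparison argument goes through.
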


\proof
We simply note that $V$ is a supersolution to
\eqref{ELu0} in $\R^2$, i.e.
\begin{equation}
\label{ELu-comp0-V}
(-\Delta)^{1/2} V + V^2 \ge  (-\Delta)^{1/2} V  \quad  \text{in}  \ \mathcal D'(\R^2).
\end{equation}
Hence, \eqref{e-global} follows from the Comparison Principle in $\R^2$.
\qed

The Comparison Principle can be used as an alternative tool to prove
the existence of the solution $u_V$ of \eqref{ELu0}, via construction
of appropriate sub and supersolutions. In the next section we
construct an explicit barrier which later will be used to obtain lower
and upper solution with matching sharp asymptotics at infinity. This
will lead to the sharp decay estimates on $u_V$ and $\rho_V$.

\subsection{Super-harmonicity of the potential is essential}
\label{sec:super}

We are going to show that the assumptions $(-\Delta)^{1/2} V\ge 0$ is in a certain sense necessary for the positivity of the minimizer $\rho_V$.

\begin{proposition}\label{P-dipole}
	Let $V\in\HH\cap C^{\alpha}(\R^2)$ for some $\alpha\in(0,1]$.
	Assume that $V\neq 0$ and
	\begin{equation}\label{e-Z0}
	\lim_{|x|\to\infty}|x|V(x)=0.
	\end{equation}
	Then $\rho_V$ changes sign in $\R^2$.
\end{proposition}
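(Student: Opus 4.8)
The plan is to argue by contradiction: assume $\rho_V$ does not change sign in $\R^2$. A direct computation gives $\E_0^{-V}(-\rho)=\E_0^V(\rho)$ for every $\rho\in\H_0$, so $\rho\mapsto-\rho$ maps the minimization of $\E_0^V$ onto that of $\E_0^{-V}$, and by the uniqueness in Proposition \ref{p-Min0} we get $\rho_{-V}=-\rho_V$. Since the hypotheses $V\in\HH\cap C^\alpha(\R^2)$, $V\neq0$ and \eqref{e-Z0} are all invariant under $V\mapsto-V$, I may assume without loss of generality that $\rho_V\ge0$ in $\R^2$. If $\rho_V\equiv0$, then the Euler--Lagrange equation \eqref{eq:EL-weak+0} reduces to $V=0$ in $\mathcal D'(\R^2)$, contradicting $V\neq0$; so from now on $\rho_V\ge0$ and $\rho_V\not\equiv0$.

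Next I would invoke Lemma \ref{lem:regularity}: since $V\in\HH\cap C^\alpha(\R^2)$, the minimizer satisfies $\rho_V\in C^\alpha(\R^2)$ with $\rho_V(x)\to0$ as $|x|\to\infty$, its potential $U_{\rho_V}$ is the Riesz potential \eqref{Riesz-ae}, and $U_{\rho_V}\in C^{1/3}(\R^2)$. In particular every term in \eqref{eq:EL-weak+0} is a continuous function, so that identity holds pointwise; since $\rho_V\ge0$ and $|\rho_V|^{1/2}\ge0$, this gives
\[
  V(x)=|\rho_V(x)|^{1/2}+U_{\rho_V}(x)\ge U_{\rho_V}(x)\qquad\text{for every }x\in\R^2.
\]

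The key step is then an elementary lower bound on the Newtonian potential of the nonnegative, continuous, nontrivial function $\rho_V$. Choose $x_0\in\R^2$, $r>0$ and $c>0$ with $\rho_V\ge c$ on $B_r(x_0)$; keeping only this part of the nonnegative mass in \eqref{Riesz-ae} yields
\[
  U_{\rho_V}(x)\ge\frac{1}{2\pi}\int_{B_r(x_0)}\frac{\rho_V(y)}{|x-y|}\ud^2 y\ge\frac{c\,|B_r|}{2\pi\bigl(|x|+|x_0|+r\bigr)}\qquad(x\in\R^2),
\]
so $\liminf_{|x|\to\infty}|x|\,U_{\rho_V}(x)>0$. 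Together with $V\ge U_{\rho_V}$ this forces $\liminf_{|x|\to\infty}|x|\,V(x)>0$, contradicting \eqref{e-Z0}. Hence $\rho_V$ changes sign.

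I do not expect a genuine obstacle; the two points needing care are the reduction to $\rho_V\ge0$ (from the symmetry $\rho_{-V}=-\rho_V$ and uniqueness) and the upgrade of \eqref{eq:EL-weak+0} from a distributional to a pointwise identity, which is exactly what Lemma \ref{lem:regularity} delivers. Conceptually, the proposition is a partial converse to ``$(-\Delta)^{1/2}V\ge0\Rightarrow\rho_V\ge0$'' from Theorem \ref{thm:Egeneral}: a one-signed screening density always produces a Coulombic tail of order $|x|^{-1}$ in its potential, which along the minimizer must be dominated by the external potential, so a potential decaying faster than Coulomb's law can only be balanced by a sign-changing $\rho_V$.
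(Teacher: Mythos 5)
Your proof is correct and follows essentially the same route as the paper: invoke Lemma \ref{lem:regularity} to upgrade the Euler--Lagrange relation \eqref{eq:EL-weak+0} to a pointwise identity, observe that a one-signed $\rho_V$ forces $\liminf_{|x|\to\infty}|x|\,U_{\rho_V}(x)>0$, and contradict \eqref{e-Z0}. The only cosmetic differences are your use of the symmetry $\rho_{-V}=-\rho_V$ to reduce to the case $\rho_V\ge0$ (the paper instead appeals to ``a symmetric argument'') and your choice of a fixed ball $B_r(x_0)$ for the potential lower bound in place of the paper's bound \eqref{Riesz-ae-loc} over $B_{2|x|}(x)$.
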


\begin{remark}
  The assumption \eqref{e-Z0} implicitly necessitates that
  $(-\Delta)^{1/2}V$ can not be non-negative.  Indeed, if
  $(-\Delta)^{1/2}V\ge 0$ then $\lim_{|x|\to\infty}|x|V(x)>0$ (cf. \eqref{Riesz-ae-loc} below), which is incompatible with \eqref{e-Z0}.
\end{remark}

\proof According to \eqref{eq:EL-weak+0} and Lemma
  \ref{lem:regularity}, we know that $\rho_V\in \mathcal H_0\cap C^{\alpha}(\R^2)$, 
  $U_\rho$   could be identified with the Riesz potential of the function
  $\rho$ as in \eqref{Riesz-ae}, $U_{\rho_V}\in C^{1/3}(\R^2)$,
   and 
\begin{equation}
  \mathrm{sign}(\rho_V)|\rho_V|^{1/2}(x)=V(x)-
  U_{\rho_V}(x)\quad\text{for all $x\in\R^2$}.
\end{equation}
Assume that $\rho_V\ge 0$ in $\R^2$. 
Then for each $x\in\R^2$,
\begin{align}\label{Riesz-ae-loc}
	U_\rho(x)\ge\frac{1}{2\pi}\int_{B_{2|x|}(x)}\frac{\rho(y)}{|x-y|}\ud^2 y\ge \frac{1}{4\pi|x|}\int_{B_{2|x|}(x)}\rho(y)\ud^2 y.
\end{align}
In particular,
\begin{equation}
\liminf_{|x|\to\infty}|x|U_{\rho_V}(x)>0
\end{equation}
and hence, in view of \eqref{e-Z0},
\begin{equation}
  \limsup_{|x|\to\infty}|x|\mathrm{sign}(\rho_V)|\rho_V|^{1/2}(x)=
  \limsup_{|x|\to\infty}|x|(V(x)- 
  U_{\rho_V}(x))<0,
\end{equation}
a contradiction. A symmetric argument shows that $\rho_V\le 0$ is also
impossible.  \qed

\begin{remark}
	For example, we can consider the dipole potential
	$$W_Z(x)=\frac{Z}{2 \pi(1+|x|^2)^{3/2}}.$$
	Note that $W_Z(x)=-\left. {d \over dt} V_{Z,t}(x)
        \right|_{t=1}$.  While $W_Z>0$, it is not difficult to see,
        using the harmonic extension of $W_Z$, that
	$$(-\Delta)^{1/2}W_Z(|x|)=\frac{Z(2-|x|^2)}{2
            \pi(1+|x|^2)^{5/2}},$$ 
	which is a sign--changing function.
	Clearly, $W_Z$ satisfies the assumptions of Proposition \ref{P-dipole}, so the minimizer $\rho_{W_Z}$ changes sign for any $Z>0$.
\end{remark}

\subsection{Sign-changing minimizer in TFW model}
\label{sec:sign}

A density functional theory of Thomas-Fermi-Dirac-von~Weizs\"acker (TFW) type to describe the response of a single layer of graphene to a charge $V$ was developed in \cite{LMM}. For $\eps > 0$, and in the notations of the present paper, the TFW-energy studied in \cite{LMM} has the form:
\begin{equation}\label{e-dipole}
\E_{0,\eps}(\rho) :=
\eps\||\rho|^{-1/2}\rho\|_{\mathring{H}^{1/2}(\R^2)}^2+
\E_0(\rho):\H_0\to\R\cup\{+\infty\}.
\end{equation}
The existence of a minimizer for  $\E_{0,\eps}$ with $V\in \HH$ was established in \cite[Theorem 3.1]{LMM}.
We are going to show that if $V\ge 0$ satisfies the assumptions of Proposition \ref{P-dipole} then for sufficiently small $\eps>0$ the TFW--energy $\E_{0,\eps}$ {\em admits a sign--changing minimizer}. This gives a partial answer to one of the questions left open in \cite{LMM} (see discussions in \cite[Section 3]{LMM}).

To show the existence of a sign--changing minimizer for $\E_{0,\eps}$, 
assume that $V\ge 0$ and the assumptions of Proposition \ref{P-dipole} holds. Then the minimizer $\rho_V$ of $\E_0$ changes sign.
Let
$$E_0:=\inf_{\H_0}\E_0=\E_0(\rho_V).$$
Similarly to Proposition \ref{p-Min0}, we can also minimize convex energy $\E_0$ on the weakly closed set $\H_0^+$ of nonnegative functions in $\H_0$. 
Let $\rho_V^+\in\H_0^+$ be the minimizer of $\E_0$ on $\H_0^+$ and set
$$E_0^+:=\inf_{\H_0^+}\E_0=\E_0(\rho_V^+).$$
It is clear that $E_0^+<0$ and hence $\rho_V^+\neq 0$ (just take trial functions $0\le \varphi \in\mathcal D'(\R^2)$ such that $\langle V,\varphi\rangle>0$). By an adaptation of arguments in \cite[Theorem 11.13]{Lieb-Loss}, the minimizer $\rho_V^+$ satisfies the Thomas--Fermi equation
\begin{equation}\label{TF0++}
	(\rho_V^+)^{3/2}=\big(V-U_{\rho_V^+}\big)^+\quad\text{in $\mathcal D'(\R^2)$}. 
\end{equation}
Observe that  $\mathrm{supp}(\rho_V^+)\neq\R^2$. Indeed, assume that $\rho_V^+>0$ in $\R^2$.
Then $\rho_V^+>0$ satisfies the Euler-Lagrange equation
\begin{equation}\label{TF++}
	(\rho_V^+)^{3/2}=V-U_{\rho_V^+}\quad\text{in $\mathcal
		D'(\R^2)$}, 
\end{equation}
which contradicts to the uniqueness, since \eqref{TF++} has a sign--changing solution $\rho_V$ by Proposition \ref{P-dipole}. 
Crucially, by the strict convexity of $\E_0$ we can also conclude that
\begin{equation}\label{EE+}
E_0<E_0^+.
\end{equation}
Next, for $\eps > 0$ consider the TFW-energy $\E_{0,\eps}$. Set
$$E_{0,\eps}:=\inf_{\H_0}\E_{0,\eps}.$$
The existence of a minimizer for  $E_{0,\eps}$ was established in \cite[Theorem 3.1]{LMM}.
Without loss of a generality, we may assume that $\rho_V$ is regular
enough and $|\rho_V|^{-1/2}\rho\in \mathring{H}^{1/2}(\R^2)$ (otherwise we may approximate $\rho_V$ by smooths functions). 
Then
$$E_{0,\eps}\le\eps \||\rho_V|^{-1/2}\rho_V\|_{\mathring{H}^{1/2}(\R^2)}^2 
+E_0\to E_0\quad\text{as $\eps\to 0$}.$$ 
Similarly, 
$$E_{0}^+\le E_{0,\eps}^+:=\inf_{\H_0^+}\E_{0,\eps}.$$
Taking into account the strict inequality \eqref{EE+}, for sufficiently small $\eps>0$ we have
$$E_0<E_{0,\eps}< E_{0}^+\le E_{0,\eps}^+.$$
In particular, $E_{0,\eps}< E_{0,\eps}^+$ and we conclude that a minimizer for $E_{0,\eps}$ must change sign. For example, a dipole, or any compactly supported nonnegative potential should give rise to a sign--changing
global minimizer in the TFW model.

\subsection{Logarithmic barrier}\label{ALog}

Recall (cf. \cite{FV}*{Theorem 1.1}) that for a radial function
$u\in C^2(\overline{\R_+})$ such that
\begin{equation}
  \int_0^\infty \frac{|u(r)|}{(1+r)^{3}}\, r \ud r <\infty,
\end{equation}
the following representation of the fractional Laplacian
$(-\Delta)^{1/2}$ in $\R^2$ is valid:
\begin{equation}\label{VF}
  (-\Delta)^{1/2}u(r)=\frac{1}{2 \pi 
    r}\int_1^\infty\left(u(r)-u(r\tau)
    +\frac{u(r)-u(r/\tau)}{\tau}\right)\mathcal K(\tau) \ud \tau, 
\end{equation}
where
\begin{equation}
  \mathcal K(\tau) := 2\pi\tau^{-2}\,
  {}_{2}F_1(\tfrac{3}{2},\tfrac{3}{2},1,\tau^{-2}) ,
\end{equation}
see \cite{FV}*{p. 246}. Note that $\mathcal K(\tau) > 0$ and
  \begin{align}
    \mathcal K(\tau) & \sim 
                       (\tau-1)^{-2} \qquad \text{as} \qquad \tau \to 1^+,  \label{K-1}\\
    \mathcal K(\tau) & \sim
                       \tau^{-2}\qquad \text{as} \qquad \tau \to +\infty,\label{K-inf}
  \end{align}
  so the kernel $\mathcal K(\tau)$ is integrable as
$\tau\to +\infty$, but it is {\em singular} as $\tau\to 1^{+}$.

Denote
\begin{equation}
  \Phi_u(r,\tau):=u(r)-u(r\tau) +\frac{u(r)-u(r/\tau)}{\tau}.
\end{equation}
Clearly, $\Phi_u(r,1)=0$. A direct computation shows that
\begin{equation}
  \partial_\tau\Phi_u(r,1)=0,\quad\partial^2_\tau\Phi_u(r,1)=-2r^2\mathscr
  L u(r), 
\end{equation}
where the differential expression
\begin{equation}
\mathscr L u(r):=u''(r)+\frac{2}{r}u'(r)
\end{equation}
acts on $u(r)$ as the radial Laplacian in 3D.  In particular, the
integral in \eqref{VF} converges as $\tau\to 1^{+}$.  


We now define a barrier function $U \in C^2(\overline{\R_+})$ such
  that $U(r)$ is monotone decreasing and
\begin{equation}\label{e-Barrier}
  U(r) =
  \frac{1}{r\log(er)} \qquad \forall r>1.
\end{equation}
Clearly, if $u(x) := U(|x|)$ then $u\in H^{1}(\R^2)$.  By
interpolation between $L^2(\R^2)$ and $H^{1}(\R^2)$
(cf. \cite{Bahouri}*{Proposition 1.52}) we also conclude that
$u\in H^{1/2}(\R^2)$.

\begin{lemma}\label{l-Barrier} 
  There exists $R>2$ such that
	\begin{equation}\label{e-log-est}
          (-\Delta)^{1/2}U(r) \sim -
          \frac{1}{r^2(\log(r))^2}\quad\text{for all }\, r>R. 
	\end{equation}
\end{lemma}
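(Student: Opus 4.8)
The strategy is to plug the explicit barrier $U$ into the radial representation formula \eqref{VF} and carefully track the cancellations. For $r>R$ large, split the $\tau$-integral at some fixed threshold, say $\tau=2$, into a ``near'' part $\tau\in(1,2)$ where the kernel $\mathcal K(\tau)\sim(\tau-1)^{-2}$ is singular, and a ``far'' part $\tau\in(2,\infty)$ where $\mathcal K(\tau)\sim\tau^{-2}$ is integrable. On the far part, one estimates $\Phi_U(r,\tau)$ directly: for $r\tau>1$ each term is controlled by $U(r)\sim 1/(r\log r)$ and $U(r/\tau)\le U(\cdot)$ along the appropriate range, and one checks that the resulting contribution is $O\big(r^{-2}(\log r)^{-1}\big)$ or smaller — hence subdominant relative to the claimed $r^{-2}(\log r)^{-2}$, or at worst of the same order but with a controllable sign. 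The main weight of the formula comes from the near part.

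\textbf{Near part — the key computation.} On $\tau\in(1,2)$ with $r>1$, all arguments $r\tau, r/\tau$ exceed $1$, so $U$ equals $1/(s\log(es))$ there and is $C^\infty$. Write $\tau=1+t$, expand $\Phi_U(r,1+t)$ in $t$. Since $\Phi_U(r,1)=\partial_\tau\Phi_U(r,1)=0$ and $\partial_\tau^2\Phi_U(r,1)=-2r^2\mathscr L U(r)$, the leading behavior is $\Phi_U(r,1+t)\approx -r^2 t^2\,\mathscr L U(r)+O(r^2 t^3 \sup|U'''|)$. A direct computation with $U(r)=1/(r\log(er))$ gives $\mathscr L U(r)=U''+\tfrac2r U'$; the dominant term as $r\to\infty$ is $\mathscr L U(r)\simeq -\dfrac{1}{r^3(\log r)^2}$ — indeed the $3$D radial Laplacian of $1/r$ vanishes, so the leading nonzero contribution is forced by the $\log$ factor and one finds $\mathscr L U(r)= -\dfrac{1}{r^3\log(er)}\cdot\dfrac{1}{\log(er)}\big(1+o(1)\big)$ up to lower-order terms in $1/\log(er)$; thus $r^2\mathscr L U(r)\simeq -r^{-1}(\log r)^{-2}$. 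Then
\begin{equation}
(-\Delta)^{1/2}U(r)\approx\frac{1}{2\pi r}\int_1^2\Phi_U(r,\tau)\mathcal K(\tau)\ud\tau\approx\frac{-r^2\mathscr L U(r)}{2\pi r}\int_0^1 t^2\,\frac{c}{t^2}\,\ud t,
\end{equation}
and the $t^2$ from Taylor cancels the $t^{-2}$ singularity of $\mathcal K$, leaving a convergent integral of order $1$; multiplying the prefactor $-r\,\mathscr L U(r)/(2\pi)\simeq r^{-2}(\log r)^{-2}$ (positive) against the sign in \eqref{VF} yields the claimed $(-\Delta)^{1/2}U(r)\sim -r^{-2}(\log r)^{-2}$.

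\textbf{Obstacle and cleanup.} The main technical obstacle is rigorously justifying the Taylor expansion uniformly enough that the remainder, when integrated against the non-integrable-looking weight $\mathcal K(\tau)\ud\tau$ near $\tau=1$, is genuinely lower order — one needs $|\partial_\tau^3\Phi_U(r,\tau)|\lesssim r^3\sup_{s\ge r/2}|U'''(s)|$ uniformly on $(1,2)$, and then $\int_0^1 t^3\cdot t^{-2}\ud t<\infty$ gives a remainder $O\big(r^2\cdot r^3 U'''\big)\cdot r^{-1}=O(r^{-2}(\log r)^{-2})\cdot o(1)$ since $U'''(r)\sim r^{-4}(\log r)^{-1}$ carries an extra $1/\log r$. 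A secondary subtlety: one must confirm that the far part ($\tau>2$) does not contribute at the same order with the wrong sign; here the crude bound $|\Phi_U(r,\tau)|\le 2U(r)+U(r/\tau)+U(r\tau)/\tau$ together with monotonicity and $\int_2^\infty\mathcal K(\tau)\ud\tau<\infty$ gives $O(r^{-1}\cdot U(r))=O(r^{-2}(\log r)^{-1})$, which is actually \emph{larger} by a factor $\log r$ — so this crude bound is insufficient and one must instead exploit the cancellation $2U(r)-U(r\tau)-U(r/\tau)/\tau$ more carefully, writing it as $\big(U(r)-U(r\tau)\big)+\big(U(r)-U(r/\tau)\big)+U(r/\tau)(1-1/\tau)$ and using $|U(r)-U(r\tau)|\lesssim (\tau-1)\, r\,|U'(\xi)|$ on a dyadic range, which restores the extra decay. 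I expect the $\tau\to1^+$ analysis to be routine once the expansion is set up, and the bookkeeping of the $1/\log(er)$ corrections in $\mathscr L U$ to be the part demanding the most care.
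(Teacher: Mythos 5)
Your plan — plug $U$ into the radial representation \eqref{VF}, isolate the $\tau\to 1^+$ singularity by a Taylor expansion of $\Phi_U$, and control the rest — is in the same spirit as the paper, but as written it has two genuine problems, one a sign error and one a real gap.

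\textbf{Sign error in $\mathscr L U$.} You assert $\mathscr L U(r)\simeq -\dfrac{1}{r^3(\log r)^2}$, but the 3D radial Laplacian of $U(r)=1/(r\log(er))$ is \emph{positive}: writing $L=\log(er)$ one finds $\mathscr L U(r)=\dfrac{L+2}{r^3L^3}=\dfrac{\log(e^3r)}{(r\log(er))^3}>0$ for $r>1$ (this is exactly what the paper computes). You then recover the correct final sign only by invoking ``the sign in \eqref{VF},'' but \eqref{VF} carries no extra minus sign; the two mistakes happen to cancel. With the correct $\mathscr L U>0$ the Taylor leading term $\Phi_U(r,1+t)\approx -r^2t^2\mathscr L U(r)$ is genuinely negative, and $\frac{1}{2\pi r}\int\Phi_U\mathcal K$ comes out negative without any fudge.

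\textbf{The $\tau\in[2,r]$ range is not negligible and your fix for it does not work.} You correctly notice in your cleanup paragraph that the crude bound $|\Phi_U|\lesssim U(r)$ over $\tau>2$ gives $O(r^{-2}(\log r)^{-1})$, too big by a factor of $\log r$. But the proposed repair — a Mean Value Theorem estimate $|U(r)-U(r\tau)|\lesssim(\tau-1)r|U'(\xi)|$ integrated against $\tau^{-2}$ — also fails: using $|U'(\xi)|\lesssim|U'(r)|\sim r^{-2}(\log r)^{-1}$ gives $\int_2^r\frac{\tau-1}{\tau^2}\ud\tau\sim\log r$, which after the prefactor yields $O(r^{-2})$, \emph{worse} than the crude bound. (Also, your rewrite starts from $2U(r)-U(r\tau)-U(r/\tau)/\tau$, which is not $\Phi_U$; the correct expression is $(1+\tfrac1\tau)U(r)-U(r\tau)-\tfrac1\tau U(r/\tau)$.) The underlying issue is that the mid-range $\tau\in[2,r]$ is \emph{not} subdominant — it contributes a term of order $r^{-2}(\log r)^{-2}$, the same as the near part, and this is fine only because it has the right (negative) sign. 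The paper handles this by (a) proving the pointwise inequality $\Phi_U(r,\tau)\le 0$ for all $\tau\in[1,r]$, which settles the sign once and for all, and (b) computing $\int_2^r r\Phi_U(r,\tau)\tau^{-2}\ud\tau$ \emph{explicitly} via the substitution $\tau=e^x$, $z=1/\log r$, obtaining $-\tfrac{7+6\log 2}{16}z^2+O(z^3)$. Nothing in your proposal produces either ingredient; without them, the upper bound in \eqref{e-log-est} is not established. The paper also replaces your soft Taylor-plus-remainder argument on $[1,2]$ by the clean uniform two-sided bracket $-4r^2\mathscr L U(r)(\tau-1)^2\le\Phi_U(r,\tau)\le 0$ on $[1,2]$, which sidesteps the need to track third-derivative remainders entirely.
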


\proof Our strategy is to split the representation in \eqref{VF} into
three parts $\int_1^2+\int_2^r+\int_r^\infty$ and then either estimate
each part from above and below or compute the integrals
explicitly, see \eqref{Lsub} and \eqref{Lsuper}.

For $r>2$ we compute
\begin{equation}
\mathscr L U(r)=\frac{\log(e^3 r)}{(r\log(er))^3}>0.
\end{equation}
Next we claim that for all $r>2$ the following inequalities hold:
\begin{align}
  \Phi_U(r,\tau)&< U(r)\quad&\forall&\tau\in[r,+\infty),\label{i1}\\
  \Phi_U(r,\tau)&\le 0 \quad &\forall&\tau\in[1,r],\label{i2}\\
  \Phi_U(r,\tau)&\ge -4 r^2\mathscr{L}
                  U(r)(\tau-1)^2\quad&\forall&\tau\in[1,2].\label{i3} 
\end{align}

We begin by noting that by monotonicity and positivity of $U$ we
have
\begin{equation}
  \Phi_U(r,\tau)
  < U(r),
\end{equation}
which yields \eqref{i1}. To deduce \eqref{i2}, observe that for
$r>2$ and $1\le\tau\le r$ we have
\begin{equation}
  \Phi_U(r,\tau)=\frac{1}{r}\left\{\frac{1}{\log(er)}-
    \frac{1}{\log(er/\tau)}+
    \frac{1}{\tau}\left(\frac{1}{\log(er)}-\frac{1}{\log(er\tau)}
    \right)\right\}.    
\end{equation}
It is elementary to see that \eqref{i2} is equivalent to
\begin{equation}
\frac{\log(er\tau)}{\log(er/\tau)}\ge\frac{1}{\tau},
\end{equation}
the latter is true for any $r>1$ and $\tau\in[1,r]$ (since in this
range the left hand side is bigger than one).

To derive \eqref{i3}, let $A:=\log(er)$ and observe that for $r>2$ and
$\tau\in[1,2]$ we have $A>1$ and
\begin{multline}\label{i3-1}
  r\,\left\{\Phi_U(r,\tau)+4\mathscr{L} U(r)r^{2}(\tau-1)^2\right\}=\\
  =\frac{1}{\log(er)}-\frac{1}{\log(er)-\log(\tau)}+\frac{1}{\tau}
  \left(\frac{1}{\log(er)}-\frac{1}{\log(er)+\log(\tau)}\right)
  +\frac{4\log(e^3 r)}{(\log(er))^3}(\tau-1)^2\\
  =\frac{1}{A}\left(1+\frac{1}{\tau}\right)-
  \left(\frac{1}{A-\log(\tau)}+\frac{1}{\tau(A+\log(\tau))}\right)
  +\frac{4(2+A)}{A^3} (\tau-1)^2\\
  \ge\frac{1}{A}\left(1+\frac{1}{\tau}\right)-
  \left(\frac{1}{A-\log(\tau)}+\frac{1}{\tau(A+\log(\tau))}\right)
  +\frac{4}{A^2}(\log(\tau))^2,
\end{multline}
where we used the fact that $\log(\tau)<\tau-1$ for $\tau\ge 1$.  It
is convenient to substitute $\tau=e^x$, where $x\in[0,\log(2)]$.
Then, taking into account that $A\ge\log(2e) > 2x$ we rewrite the
right-hand side of \eqref{i3-1} as
\begin{multline}
  \frac{1}{A}-\frac{1}{A-x}+e^{-x}\left(\frac{1}{A}-\frac{1}{A+x}\right)
  +\frac{4 x^2}{A^2}  = {x \over A} \left\{ -{1 \over A - x}
      + {e^{-x} \over A + x} +
      {4x \over A}  \right\} \\
  \ge\frac{x}{A^2}\left\{-1-\frac{2x}{A}+
      (1-x)\left(1-\frac{x}{A} \right)
      +4 x\right\} \\
  \ge\frac{3
      x^2}{A^2}\left\{1-\frac{1}{A}\right\}
  \ge 0\quad\text{for all $x\in[0,\log(2)]$}.
\end{multline}


Now, for $r>2$, we compute explicitly, using again the
  substitution $\tau = e^x$ and a standard asymptotic expansion of the
  integral:
\begin{multline}\label{Wolf-2}
  \int_2^r r \Phi_U(r,\tau)\tau^{-2}d\tau=
  \int_{\log(2)}^{z^{-1}} \left(\frac{x z^2 e^{-x} }{(z+1) (x
        z+z+1)}+\frac{z}{(x-1)
        z-1}+\frac{z}{z+1}\right) e^{-x}  dx \\
= -\frac{7+6\log(2)}{16} z^2 + O(z^3) \qquad \text{as} \qquad z
  \to 0^+,
\end{multline}
where we defined $z := 1/\log(r)$. Similarly, we have
\begin{multline}
  \label{Wolf-R}
  \left| \int_r^\infty r \Phi_U(r,\tau)\tau^{-2}d\tau \right| \leq
    \int_{z^{-1}}^\infty \frac{z e^{-x}}{z+1} dx + U(0)
    e^{z^{-1}}\int_{z^{-1}}^\infty e^{-2x} dx \leq (1 + \tfrac12 U(0))
    e^{-z^{-1}}.
\end{multline}
Therefore, taking into account \eqref{K-inf} and using \eqref{i1},
\eqref{i2} and \eqref{Wolf-2}, for $r>2$ we estimate
\begin{multline}\label{Lsub}
  (-\Delta)^{1/2}U(r)\lesssim
  r^{-1}\int_2^r\Phi_U(r,\tau)\tau^{-2}d\tau
  +r^{-1}U(r)\int_{r}^\infty\mathcal \tau^{-2}d\tau,\\
  \sim-\frac{1}{r^{2}(\log(r))^2}+\frac{1}{r^{3}\log(r)}
  \sim - \frac{1}{r^{2}(\log(r))^2} \qquad \text{as} \qquad r
    \to \infty.
\end{multline}
To deduce a lower estimate, we use \eqref{i3}, \eqref{Wolf-R} and
\eqref{Wolf-2} to obtain
\begin{multline}\label{Lsuper}
  (-\Delta)^{1/2}U(r)\gtrsim-r\mathscr{L}U(r)+
  r^{-1}\left(\int_{2}^r+\int_r^\infty\right)\Phi_U(r,\tau)
  \tau^{-2} d\tau,\\
  \gtrsim-\frac{1}{r^{2}(\log(r))^2}-\frac{1}{r^{2}(\log(r))^2}-
  \frac{1}{r^{3}} \sim -\frac{1}{r^{2}(\log(r))^2}
  \qquad \text{as} \qquad r \to \infty,
\end{multline}
which completes the proof.
\qed

\subsection{Decay estimate}
\label{sec:decay}

\begin{proposition}\label{P-decay}
	Let $V\in\HH\cap C^{\alpha}(\R^2)$ for some $\alpha\in(0,1]$.
	Assume that $(-\Delta)^{1/2}V\geq 0$, $V\neq 0$, and for some $R>0$ and $C>0$,
	\begin{equation}\label{V-Logdec}
	(-\Delta)^{1/2}V\le\frac{C}{|x|^2(\log|x|)^2}\quad\text{for }|x|\ge R.
	\end{equation}
	Then the unique solution $u_V\in H^{1/2}(\R^2)\cap C^{\alpha}(\R^2)$ of \eqref{ELu0} satisfies 
	\begin{equation}\label{eV-upper}
	0<u_V(x)\le V(x)\quad\text{for all }x\in\R^2
	\end{equation}
	and
	\begin{equation}\label{eV-upperLog}
	u_V(x)\sim\frac{1}{|x|\log|x|}\quad\text{as }|x|\to\infty.
	\end{equation}
	In particular, $u_V\in L^2(\R^2)$. 
\end{proposition}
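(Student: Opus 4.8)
The plan is to combine the comparison principle of Lemma~\ref{l-comparison0} with the logarithmic barrier $U$ of Lemma~\ref{l-Barrier}, using $MU$ with $M$ large as a supersolution and $mU$ with $m$ small as a subsolution of~\eqref{ELu0} outside a large ball, so that the sandwich $mU\le u_V\le MU$ delivers~\eqref{eV-upperLog}. Before running this, I would collect the structural facts. Since $(-\Delta)^{1/2}V\ge 0$, the distribution $\mu:=(-\Delta)^{1/2}V$ is a nonnegative Radon measure, and~\eqref{V-Logdec} together with its local finiteness gives $\mu\in L^1(\R^2,(1+|x|)^{-1}\ud^2 x)$; hence $V$ equals the Riesz potential of $\mu$, in particular $V\ge 0$, and Corollary~\ref{c-KO} gives the upper inequality $u_V\le V$ in~\eqref{eV-upper}. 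By Proposition~\ref{pPDE0}, $u_V\in\HH$ solves~\eqref{ELu0} (and $u_V\in L^3(\R^2)$ since $\rho_V\in L^{3/2}(\R^2)$); by Lemma~\ref{lem:regularity} the minimizer $\rho_V$, hence $u_V=\sgn(\rho_V)|\rho_V|^{1/2}$, is H\"older continuous, bounded and vanishes at infinity; and by Proposition~\ref{pPDEplus}, $u_V\ge 0$ with $u_V\neq 0$. To upgrade this to strict positivity I would read~\eqref{ELu0} as the linear equation $(-\Delta)^{1/2}u_V+u_V\cdot u_V=\mu\ge 0$ with bounded nonnegative coefficient $c:=u_V$ and invoke the strong maximum principle for $(-\Delta)^{1/2}$: a nonnegative, not identically zero solution is strictly positive everywhere. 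This completes~\eqref{eV-upper}.

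For the decay, fix by Lemma~\ref{l-Barrier} a radius $R_1>2$ and constants $0<c_1\le c_2$ with $-c_2\,r^{-2}(\log r)^{-2}\le(-\Delta)^{1/2}U(r)\le-c_1\,r^{-2}(\log r)^{-2}$ for $r>R_1$; recall $U(r)^2=r^{-2}(\log(er))^{-2}\sim r^{-2}(\log r)^{-2}$, and enlarge $R_1$ so that~\eqref{V-Logdec} holds for $|x|>R_1$. With $\bar u(x):=MU(|x|)$ — legitimately differentiated via the pointwise representation~\eqref{VF}, since $U\in C^2$ has the required decay — a direct computation gives, for $|x|>R_1$ and a fixed $c_3>0$,
\[
(-\Delta)^{1/2}\bar u(x)+\bar u(x)\,|\bar u(x)|\ \ge\ (c_3M^2-c_2M)\,|x|^{-2}(\log|x|)^{-2}.
\]
Choosing $M$ large enough that $c_3M^2-c_2M\ge C$, with $C$ as in~\eqref{V-Logdec}, makes $\bar u$ a supersolution of~\eqref{ELu0} in $\Omega:=\R^2\setminus\bar B_{R_1}$; enlarging $M$ further — $U$ is bounded below by a positive constant on $\bar B_{R_1}$ while $u_V$ is bounded there — gives $\bar u\ge u_V$ on $\bar B_{R_1}$. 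Lemma~\ref{l-comparison0} then yields $u_V\le MU$ on $\R^2$, whence $u_V(x)\lesssim|x|^{-1}(\log|x|)^{-1}$.

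The lower bound is symmetric. With $\underline u(x):=mU(|x|)$ one obtains, for $|x|>R_1$ and a fixed $c_4>0$,
\[
(-\Delta)^{1/2}\underline u(x)+\underline u(x)\,|\underline u(x)|\ \le\ (c_4m^2-c_1m)\,|x|^{-2}(\log|x|)^{-2}\ \le\ 0
\]
once $m$ is small, which is $\le(-\Delta)^{1/2}V$ since the latter is nonnegative; hence $\underline u$ is a subsolution in $\Omega$. Here the strict positivity of $u_V$ is crucial: it gives $u_V\ge\delta>0$ on the compact ball $\bar B_{R_1}$, so that for $m$ sufficiently small $\underline u\le u_V$ there as well (again $U$ is bounded on $\bar B_{R_1}$). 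Lemma~\ref{l-comparison0} gives $u_V\ge mU$ on $\R^2$, i.e. $u_V(x)\gtrsim|x|^{-1}(\log|x|)^{-1}$, and together with the upper bound this proves~\eqref{eV-upperLog}. Finally $|u_V|^2\lesssim|x|^{-2}(\log|x|)^{-2}$ near infinity is integrable and $u_V$ is bounded, so $u_V\in L^2(\R^2)$, which combined with $u_V\in\HH$ places $u_V$ in $H^{1/2}(\R^2)$.

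The heart of the matter — already carried out in Lemma~\ref{l-Barrier} — is that the barrier must decay at precisely the Serrin-critical rate $|x|^{-2}(\log|x|)^{-2}$: the linear term $(-\Delta)^{1/2}(MU)$ and the nonlinear term $(MU)^2$ then decay at the same rate, so a large prefactor produces a supersolution and a small prefactor a subsolution, whereas any faster or slower power-law decay would break this balance. With that lemma in hand the residual issues are routine: verifying $MU,mU\in\HH\cap L^3(\R^2)$ so that Lemma~\ref{l-comparison0} applies (true since $U(|\cdot|)\in H^1(\R^2)$ and decays like $|x|^{-1}(\log|x|)^{-1}$), passing from the pointwise fractional-Laplacian inequalities for $U$ to the distributional sub/supersolution inequalities on $\Omega$, and — the one extra input — using the strong maximum principle to guarantee $u_V>0$ pointwise, without which the lower barrier comparison could not be initiated on the inner ball.
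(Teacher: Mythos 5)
Your proposal is correct and follows essentially the same route as the paper: positivity of $V$ from $(-\Delta)^{1/2}V\ge 0$, the bound $u_V\le V$ from Corollary~\ref{c-KO}, strict positivity of $u_V$ via a strong maximum principle (the paper writes the equation with a constant zeroth-order coefficient $c=\|u_V\|_{L^\infty}$ rather than the variable coefficient $c(x)=u_V(x)$, but the conclusion and the underlying mechanism are the same), and then the two-sided decay estimate by comparing $u_V$ on $\R^2\setminus\bar B_R$ with scaled versions $\lambda U$ of the logarithmic barrier from Lemma~\ref{l-Barrier} — large $\lambda$ for a supersolution, small $\lambda$ for a subsolution — matching on the compact ball where $u_V$ has a positive minimum.
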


\begin{remark}
  \label{rem:pos}
  We do not assume radial symmetry of $V$ or $u_V$.  The assumptions
  $(-\Delta)^{1/2}V\ge 0$ and $V\neq 0$ ensure the positivity of
  $u_V$, while the upper bound \eqref{V-Logdec} controls the
  logarithmic decay rate \eqref{eV-upperLog}.  The bound
  \eqref{V-Logdec} together with $(-\Delta)^{1/2}V\ge 0$ implicitly
  necessitates that $V$ is positive in $\R^2$,
  $(-\Delta)^{1/2}V\in L^1(\R^2)$ and
	\begin{equation}\label{eV+}
          \lim_{|x|\to\infty}2 \pi |x| V(x)=\|(-\Delta)^{1/2}V\|_{L^1(\R^2)},
	\end{equation}
	see Lemma \ref{lemma-Newton} below.  	
\end{remark}

\proof Note that $(-\Delta)^{1/2}V\ge 0$ implies that $V\ge 0$ (this
could be seen similarly to the argument in the proof of Proposition
\ref{pPDEplus} but without the nonlinear term).  Then the upper bound
in \eqref{eV-upper} follows by Corollary \ref{c-KO}.  Next recall that
$u_V\in C^{\alpha}(\R^2)$ by Lemma \ref{lem:regularity} and
$u_V\neq 0$ by Proposition \ref{pPDEplus}.  Therefore, with
$c := \|u_V\|_{L^\infty{(\R^2)}}$ we get
$$((-\Delta)^{1/2}+c)u_V=(c-u_V)u_V+(-\Delta)^{1/2}V\ge 0\quad\text{in $\R^2$}.$$
This implies that $u_V(x)>0$ for all $x\in\R^2$, cf. \cite{LMM}*{Lemma 7.1}.

To derive \eqref{eV-upperLog}, set $U_\lambda:=\lambda U$, where
$U$ is the logarithmic barrier function defined in \eqref{e-Barrier}.
Recall that $U\in H^{1/2}(\R^2)\subset\HH$. Using \eqref{e-log-est} to
estimate $(-\Delta)^{1/2}U_\lambda$, we conclude that there exist
  positive constants $c_1, c_2, C$ such that for some $R'>R$ and all
sufficiently large $\lambda>0$,
\begin{multline}\label{B-super}
  (-\Delta)^{1/2}U_\lambda+bU_\lambda^2-(-\Delta)^{1/2}V\ge\\
  \ge-\frac{c_1\lambda}{|x|^{2}(\log(|x|))^2}+\frac{\lambda^2}{|x|^{2}
    (\log(e|x|))^2}-\frac{C}{|x|^2(\log|x|)^2}\ge 0 \quad\text{for
  }|x|\ge R'.
\end{multline}
Similarly, for some $R'>R$ and all sufficiently small $\lambda>0$,
\begin{equation}\label{B-sub}
(-\Delta)^{1/2}U_\lambda+bU_\lambda^2-(-\Delta)^{1/2}V\le
-\frac{c_2\lambda}{|x|^{2}(\log(|x|))^2}+\frac{\lambda^2}{|x|^{2}(\log(e|x|))^2}\le 0
\quad\text{for }|x|\ge R'.
\end{equation}
Therefore, for suitable values of $\lambda$ we can use $U_\lambda$ as a sub or supersolution in the Comparison Principle of Lemma \ref{l-comparison0} with $\Omega=B_R^c$.

To construct a lower barrier for the solution $u_V$, set $\lambda_0:=\min_{\bar B_R}u_V>0$.
Then
\begin{equation}
u_V\ge U_{\lambda_0}\quad\text{in }\,\bar B_R.
\end{equation}
Taking into account \eqref{B-sub}, we conclude by Lemma \ref{l-comparison0} that
\begin{equation}
u_V\ge U_\lambda\quad\text{in }\,\R^2,
\end{equation}
for a sufficiently small $\lambda\le\lambda_0$.

To construct an upper barrier for $u_V$, choose $\mu>0$ such that
\begin{equation}
u_V\le U_\mu\quad\text{in }\,\bar B_R,
\end{equation}
Using \eqref{B-super}, we conclude by Lemma \ref{l-comparison0} that
\begin{equation}
u_V\le U_\lambda\quad\text{in }\,\R^2,
\end{equation}
for a sufficiently large $\lambda\ge\mu$.
\qed

\subsection{Charge estimate}
\label{sec:charge}

In the case of the standard Newtonian kernel $|x|^{-1}$ on $\R^3$ it
is well--known that for a nonnegative $f\in L^1_{rad}(\R^3)$,
$|x|^{-1}*f=\|f\|_{L^1(\R^3)}|x|^{-1}+o(|x|^{-1})$ as
$|x|\to\infty$, cf. \cite{Siegel} for a discussion. The result becomes nontrivial when we
consider the convolution kernel $|x|^{-1}$ on $\R^2$, or more
generally the Riesz kernel $|x|^{-(N-\alpha)}$ on $\R^N$ with
$\alpha\in(0,N)$.  It is known that if $\alpha\in(1,N)$ and
$f\in L^1(\R^N)$ is positive radially symmetric then
$|x|^{-(N-\alpha)}*f=O(|x|^{-(N-\alpha)})$, see \cite{Siegel}*{Theorem
	5(i)}.  The same remains valid if $\alpha\in(0,1]$ and $f$ is in
addition monotone decreasing, see \cite{Duoandi}*{Lemma 2.2
	(4)}. However, without assuming monotonicity of $f$,
$|x|^{-(N-\alpha)}*f$ with $\alpha\in(0,1]$ could have arbitrary fast
growth at infinity \cite{Siegel}*{Theorem 5}.

We are going to show that if $f$ is monotone non-increasing and decays
faster than $|x|^{-2}$ then the sharp asymptotics of $|x|^{-1}*f$ on
$\R^2$ is recovered. The proof is easily extended to Riesz kernels
with $N\ge 2$ and $\alpha\in(0,N)$.

\begin{lemma}[Asymptotic Newton's type theorem]\label{lemma-Newton}
	Let $0\le f\in L^1(\R^2)$ be a function dominated by a radially
	symmetric non-increasing function $\varphi:\R_+\to\R_+$ that
	satisfies
	\begin{equation}\label{logass}
	\lim_{|x|\to\infty}\varphi(x)|x|^2=0.
	\end{equation}
	Then 
	\begin{equation}\label{e-L1}
	\int_{\R^2}\frac{f(y)}{|x-y|}\,\ud^2 y=
	\frac{\|f\|_{L^1(\R^2)}}{|x|}+o(|x|^{-1})\quad\text{as    
		$|x|\to\infty$}. 
	\end{equation}
\end{lemma}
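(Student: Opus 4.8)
The plan is to split the convolution integral according to whether $y$ is near $0$, near $x$, or far from both. Fix $x$ with $|x| = R$ large, and write
\[
\int_{\R^2}\frac{f(y)}{|x-y|}\ud^2 y = \int_{|y|\le R/2}\frac{f(y)}{|x-y|}\ud^2 y + \int_{|y-x|\le R/2}\frac{f(y)}{|x-y|}\ud^2 y + \int_{|y|>R/2,\,|y-x|>R/2}\frac{f(y)}{|x-y|}\ud^2 y.
\]
On the first (inner) region one has $|x-y|$ comparable to $R$; more precisely $\big|\,|x-y| - |x|\,\big| \le |y| \le R/2$, so
\[
\frac{1}{|x-y|} = \frac{1}{|x|}\cdot\frac{1}{1+O(|y|/|x|)} = \frac{1}{|x|}\Big(1 + O\big(|y|/|x|\big)\Big),
\]
uniformly for $|y|\le R/2$. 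Multiplying by $f(y)\ge 0$ and integrating, the leading term gives $\|f\|_{L^1(|y|\le R/2)}/|x|$, which converges to $\|f\|_{L^1(\R^2)}/|x|$ up to a tail of order $o(1/|x|)$ since $f\in L^1$; the error term is bounded by $\frac{1}{|x|^2}\int_{\R^2}|y|\,f(y)\ud^2 y$, and here the domination $f\le\varphi$ with \eqref{logass} is used to guarantee $\int |y| f(y)\,\ud^2 y$ contributes $o(1/|x|)$ after dividing by $|x|$ — more carefully, $\int_{|y|\le R/2}|y| f(y)\ud^2 y = o(R)$ as $R\to\infty$, because $|y|f(y)\le |y|\varphi(|y|)$ and $|y|^2\varphi(|y|)\to 0$ forces $\int_{|y|\le R}|y|\varphi\,\ud^2 y = o(R)$ (split that integral itself at $|y|\le \sqrt R$ and $\sqrt R<|y|\le R$).

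The third (far) region is where the hypothesis \eqref{logass} does the real work. There $|x-y|>R/2$, so that piece is at most $\frac{2}{R}\int_{|y|>R/2} f(y)\ud^2 y = o(1/R)$ again by integrability of $f$. The genuinely delicate region is the second one, the ball $B_{R/2}(x)$ around the far point $x$: here the kernel $|x-y|^{-1}$ is singular, and we only control $f$ there through the radial majorant $\varphi$, which at distance $\sim R$ from the origin is $\le \varphi(R/2)$, with $R^2\varphi(R/2)\to 0$. Bounding $f(y)\le \varphi(|y|)\le \varphi(R/2)$ on $B_{R/2}(x)$ (valid since $|y|\ge R/2$ there and $\varphi$ is non-increasing), we get
\[
\int_{|y-x|\le R/2}\frac{f(y)}{|x-y|}\ud^2 y \le \varphi(R/2)\int_{|z|\le R/2}\frac{\ud^2 z}{|z|} = \varphi(R/2)\cdot 2\pi\cdot\frac{R}{2} = \pi R\,\varphi(R/2) = o(1/R),
\]
precisely because $R^2\varphi(R/2)\to 0$. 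This is the step that would fail without the strict "faster than $|x|^{-2}$" decay assumption — at the borderline $\varphi\sim |x|^{-2}$ this term would be $\sim 1/R$ and would corrupt the constant, which is exactly the phenomenon behind the logarithmic correction in Theorem \ref{thm-main}.

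Combining the three estimates gives $\int_{\R^2} f(y)|x-y|^{-1}\ud^2 y = \|f\|_{L^1(\R^2)}/|x| + o(1/|x|)$, as claimed. I expect the main obstacle to be organizing the "far ball" estimate with the correct use of monotonicity of $\varphi$ (one must be careful that on $B_{R/2}(x)$ every point genuinely has $|y|\ge R/2$ so that $\varphi(|y|)\le\varphi(R/2)$), together with the elementary but slightly fussy lemma that $|y|^2\varphi(|y|)\to 0$ implies $\int_{|y|\le R}|y|\varphi\,\ud^2 y = o(R)$ and $\int_{|y|>R}\varphi\,\ud^2 y \to 0$; both follow by splitting the radial integrals at an intermediate scale. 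If desired one can instead formulate everything in terms of $f\in L^1$ plus the single quantitative bound, replacing $\varphi(R/2)$-type estimates by $\sup_{|y|\ge R/2}|y|^2 f(y) = o(1)$.
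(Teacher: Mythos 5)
Your proof is correct and takes essentially the same approach as the paper's: decompose the plane into a region near the origin, the ball $B_{|x|/2}(x)$ around the singularity, and a far tail; on the first use the kernel expansion to pull out $\|f\|_{L^1}/|x|$ with an $O(|x|^{-2}\int |y|f)$ error controlled via the majorant $\varphi$, on the second use monotonicity of $\varphi$ together with $|x|^2\varphi(|x|)\to 0$ to get $\pi|x|\varphi(|x|/2)=o(|x|^{-1})$, and on the third use $f\in L^1$. The paper slices the complement of $B_{|x|/2}(x)$ slightly differently (at $|y|=|x|$ rather than $|y|=|x|/2$) and applies the mean value theorem to $|x-y|^{-1}-|x|^{-1}$, but your three pieces correspond exactly to the paper's $I_1$, $I_2$, $I_3$.
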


\begin{proof}
	Fix $0\neq x\in\R^2$ and decompose $\R^2$ as the union of
	$B=\{y: |y-x|<|x|/2\}$, $A=\{y\not\in B: |y|\le |x|\}$,
	$C=\{y\not\in B: |y|>|x|\}$.
	
	We want to estimate the quantity
	\begin{equation}
	\left|\int_{A\cup C} f (y) \Bigl(\frac{1}{|x - y|} -
	\frac{1}{|x|} \Bigr) \, \ud^2 y \right|
	\le  \int_{A\cup C} f (y) \left|\frac{1}{|x - y|} -
	\frac{1}{|x|}\right| \, \ud^2 y .
	\end{equation}
	Since $|x|/2\le|x-y|\le 2|x|$ for all $y\in A$, by the Mean Value Theorem we have
	\begin{equation}
	\left|\frac{1}{|x - y|} - \frac{1}{|x|}\right|
	\le \frac{4|y|}{|x|^2}\qquad(y\in A).
	\end{equation}
	Thus
	\begin{equation}
	\label{eqRieszAsymptoticsSmall}
	\begin{split}
	\left|\int_A f (y) \Bigl(\frac{1}{|x - y|} - \frac{1}{|x|}
	\Bigr) \, \ud^2 y \right|
	& \le
	\frac{4}{|x|^2} \int_{A} f(y)|y|\, \ud^2 y.
	\end{split}
	\end{equation}
	On the other hand, since $|x-y|>|x|/2$ for all $y\in C$ then
	\begin{equation}
	\left|\frac{1}{|x|}-\frac{1}{|x - y|}\right|
	\le \frac{1}{|x|}\qquad(y\in C),
	\end{equation}
	from which we compute that
	\begin{equation}
	\label{eqRieszAsymptoticsLarge}
	\left|\int_Cf(y) \Bigl(\frac{1}{|x - y|} - \frac{1}{|x|}
	\Bigr) \, \ud^2 y \right|
	\le
	\frac{1}{|x|}\int_C f(y) \, \ud^2 y.
	\end{equation}
	Then
	\begin{multline}
	\left|\int_{\R^2}\frac{f(y)}{|x-y|}\, \ud^2 y
	-\frac{\|f\|_{L^1(\R^2)}}{|x|}\right|\le\\ 
	\frac{4}{|x|^2} \int_{A} f(y)|y|\, \ud^2
	y+\int_{B}\frac{f(y)}{|x-y|}\, \ud^2
	y+\frac{1}{|x|}\int_{B\cup C} f(y) \, \ud^2 y
	=:I_1+I_2+I_3.  
	\end{multline}
	Using \eqref{logass}, for $|x|\gg 2$ we estimate
	\begin{equation}\label{e-condition1}
	I_1=\frac{4}{|x|^2}\int_{|y|\le |x|}f(y)|y|\, \ud^2 y \le
	\frac{8
		\pi}{|x|^2}\underbrace{\int_0^{|x|}\varphi(t)t^2\,dt}_{o(|x|)} 
	=o(|x|^{-1})\qquad(|x|\to\infty).
	\end{equation}
	Also using the monotonicity of $f$ and
	\eqref{logass},
	for $|x|\gg 2$ we obtain
	\begin{equation}\label{e-condition2i}
	I_2=\int_{|y-x|\le |x|/2}\frac{f(y)}{|x-y|}\,dy\le
	\varphi(|x|/2)\int_{|z|\le |x|/2}\frac{dz}{|z|}= \pi
	\varphi(|x|/2)|x|=o(|x|^{-1}). 
	\end{equation}
	Finally, $I_3=o(|x|^{-1})$ as $|x|\to\infty$ since $f\in L^1(\R^2)$,
	so the assertion follows.
\end{proof}

\begin{proposition}\label{P-Z}
Assume that the assumptions of Proposition \ref{P-decay} holds and
\begin{equation}
  \lim_{|x|\to\infty} 2 \pi |x|V(x)=Z>0.
\end{equation}
Then $\|\rho_V\|_{L^1(\R^2)}=Z$.
\end{proposition}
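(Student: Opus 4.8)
The plan is to exploit the pointwise Euler--Lagrange identity $u_V+U_{\rho_V}=V$, which holds everywhere on $\R^2$ by Lemma~\ref{lem:regularity} and \eqref{eq:EL-weak+0}, together with the Newton-type asymptotics of Lemma~\ref{lemma-Newton} applied to the screening density itself. Multiplying the identity by $2\pi|x|$ and letting $|x|\to\infty$, one should obtain $0+\|\rho_V\|_{L^1(\R^2)}=Z$, since the three terms converge respectively to $0$, to $\|\rho_V\|_{L^1(\R^2)}$ (by Lemma~\ref{lemma-Newton}), and to $Z$ (by hypothesis).

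First I would record the consequences of Proposition~\ref{P-decay}: since $u_V>0$ in $\R^2$ and $u_V(x)\sim|x|^{-1}(\log|x|)^{-1}$ at infinity, the minimizer $\rho_V=u_V^2$ is a positive continuous function with $\rho_V(x)\sim|x|^{-2}(\log|x|)^{-2}$, hence $\rho_V\in L^1(\R^2)$ and $\lim_{|x|\to\infty}|x|^2\rho_V(x)=0$. To invoke Lemma~\ref{lemma-Newton} one needs a radially symmetric non-increasing majorant $\varphi$ of $\rho_V$ obeying \eqref{logass}; this is furnished by $\varphi(r):=\sup_{|y|\ge r}\rho_V(y)$ (suitably extended to small $r$), which is non-increasing by construction and satisfies $r^2\varphi(r)\to 0$ because $\rho_V$ is dominated near infinity by a constant multiple of the decreasing function $|x|^{-2}(\log|x|)^{-2}$. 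Since $\rho_V\in L^1(\R^2)\subset L^1(\R^2,(1+|x|)^{-1}\ud^2 x)$, the potential $U_{\rho_V}$ is the Riesz potential of $\rho_V$ (Lemma~\ref{lem:regularity}, \eqref{Riesz-ae}), so Lemma~\ref{lemma-Newton} yields
\[
  2\pi\, U_{\rho_V}(x)=\int_{\R^2}\frac{\rho_V(y)}{|x-y|}\ud^2 y
  =\frac{\|\rho_V\|_{L^1(\R^2)}}{|x|}+o(|x|^{-1})\qquad\text{as }|x|\to\infty.
\]

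It then remains to pass to the limit in $2\pi|x|\,u_V(x)+2\pi|x|\,U_{\rho_V}(x)=2\pi|x|\,V(x)$: the first term tends to $0$ because $|x|\,u_V(x)\lesssim(\log|x|)^{-1}$ by Proposition~\ref{P-decay}, the second tends to $\|\rho_V\|_{L^1(\R^2)}$ by the display above, and the right-hand side tends to $Z$ by assumption, giving $\|\rho_V\|_{L^1(\R^2)}=Z$. I do not anticipate a genuine obstacle here: every ingredient is already in place, and the only point requiring a modicum of care is checking that the radial majorant $\varphi$ inherits the $o(|x|^{-2})$ decay needed for \eqref{logass}, which follows immediately from the sharp $\log$-decay of $\rho_V=u_V^2$ established in Proposition~\ref{P-decay}.
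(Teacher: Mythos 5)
Your proposal is correct and follows essentially the same route as the paper: rewrite the pointwise Euler--Lagrange identity as $\rho_V^{1/2}(x)=V(x)-U_{\rho_V}(x)$, apply Lemma~\ref{lemma-Newton} to $U_{\rho_V}$ using the decay $\rho_V\sim|x|^{-2}(\log|x|)^{-2}$, and pass to the limit in $2\pi|x|\cdot(\text{identity})$. The only difference is cosmetic: you spell out the construction of the radial non-increasing majorant $\varphi$ required by Lemma~\ref{lemma-Newton}, a detail the paper leaves implicit, but the underlying argument is the same.
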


\proof 
According to \eqref{eq:EL-weak+0}, the minimizer $\rho_V\in \mathcal H_0\cap C^{\alpha}(\R^2)$ satisfies
\begin{equation}
  \rho_V^{1/2}(x)=V(x)-U_{\rho_V}(x)\quad\text{for all $x\in\R^2$}.
\end{equation}
Taking into account \eqref{rho-decay}, by Lemma \ref{lemma-Newton}
above we conclude that
\begin{equation}
  \lim_{|x|\to\infty}2 \pi |x|U_{\rho_V}(x)=\|\rho_V\|_{L^1(\R^2)}.
\end{equation}
Then the assertion follows since $\lim_{|x|\to\infty}|x|\rho_V^{1/2}(x)=0$.
\qed

\subsection{Universality of decay}
\label{sec:uni}

We next prove that in the case $V=V_{Z,d}$ the behavior of
$\rho_{V_{Z,d}}$ for large $|x|$ does not depend on the values of $Z$
and $d$. 

\begin{proposition}\label{P-univers}
	Let $Z>0$, $d>0$ and let $V=V_{Z,d}$ as defined in \eqref{VZd}.
	Then 
	\begin{equation}\label{e-univers}
          u_V(x)\simeq\frac{1}{|x|\log|x|}\quad\text{as }|x|\to\infty.
	\end{equation}
\end{proposition}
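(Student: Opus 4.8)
We fix $Z,d>0$, write $V=V_{Z,d}$, $u:=u_V$, $\rho:=u^2$, and work from the radial \emph{integral equation} for $u$. Since $(-\Delta)^{1/2}V=(4\pi^2d/Z^2)V^3\in L^{4/3}(\R^2)$ is radial and non-increasing, Corollary~\ref{c-monotone} gives that $u$ is radial and non-increasing; Lemma~\ref{lem:regularity} then shows that $U_\rho$ is the Riesz potential of $\rho$ and that \eqref{eq:EL-weak+0} holds pointwise, i.e.
\[
  u(x)=V(x)-\frac{1}{2\pi}\int_{\R^2}\frac{\rho(y)}{|x-y|}\ud^2 y ,\qquad x\in\R^2 .
\]
By Propositions~\ref{P-decay} and~\ref{P-Z} we already know the crude two-sided bound $u(x)\sim(|x|\log|x|)^{-1}$ (hence $\rho(y)\sim|y|^{-2}(\log|y|)^{-2}$) and $\|\rho\|_{L^1(\R^2)}=Z$. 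Introduce the tail mass $m(r):=\int_{|y|>r}\rho(y)\ud^2 y$; then $m(r)\sim(\log r)^{-1}$ and, $\rho$ being radial and continuous, $m'(r)=-2\pi r\,\rho(r)=-2\pi r\,u(r)^2$.

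The technically central step is a \emph{second-order Newton estimate}. Using $V(x)-\tfrac{Z}{2\pi|x|}=O(|x|^{-3})$ and $Z=\|\rho\|_{L^1}$, write
\[
  u(x)=\Bigl(\tfrac{Z}{2\pi|x|}-U_\rho(x)\Bigr)+O(|x|^{-3}),\qquad
  U_\rho(x)-\tfrac{Z}{2\pi|x|}=\frac{1}{2\pi}\int_{\R^2}\rho(y)\Bigl(\tfrac{1}{|x-y|}-\tfrac{1}{|x|}\Bigr)\ud^2 y .
\]
Splitting $\R^2$ into $\{|y|\le|x|/2\}$, $\{|x|/2<|y|<2|x|\}$ and $\{|y|\ge 2|x|\}$, one checks — using only the crude bound $\rho(y)\lesssim|y|^{-2}(\log|y|)^{-2}$, the vanishing of $\int\rho(y)(x\cdot y)\ud^2 y$ by radial symmetry, and $\int_{B_R(x)}|x-y|^{-1}\ud^2 y=2\pi R$ — that the first two regions, together with the $|x-y|^{-1}$ part of the third, all contribute $O\bigl(|x|^{-1}(\log|x|)^{-2}\bigr)$, while the unique term of the critical order $(|x|\log|x|)^{-1}$ is $-\tfrac{1}{2\pi|x|}\int_{|y|\ge 2|x|}\rho=-\tfrac{m(2|x|)}{2\pi|x|}$. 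Hence
\[
  u(x)=\frac{m(2|x|)}{2\pi|x|}+O\!\left(\frac{1}{|x|(\log|x|)^2}\right),
\]
so, combining with the crude decay, $u(r)=\frac{m(2r)}{2\pi r}\,(1+o(1))$ and therefore $m'(r)=-\frac{m(2r)^2}{2\pi r}\,(1+o(1))$. Finally $m(r)-m(2r)=\int_r^{2r}2\pi s\,u(s)^2\ud s=O\bigl((\log r)^{-2}\bigr)=o(m(r))$, so in fact $m'(r)=-\frac{m(r)^2}{2\pi r}\,(1+o(1))$.

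It remains to integrate the ODE. Dividing by $m^2$, $\bigl(1/m\bigr)'(r)=\frac{1}{2\pi r}(1+o(1))$; integrating from a large $R_1$ and using $\int_{R_1}^r s^{-1}o(1)\,\ud s=o(\log r)$ gives $1/m(r)=\tfrac{\log r}{2\pi}(1+o(1))$, that is $m(r)\simeq\frac{2\pi}{\log r}$. Substituting back, $u(r)=\frac{m(2r)}{2\pi r}(1+o(1))=\frac{1}{r\log(2r)}(1+o(1))=\frac{1}{r\log r}(1+o(1))$, which is \eqref{e-univers}. Universality is now transparent: $Z$ and $d$ enter only through lower-order data (the constant of integration $1/m(R_1)$ and the $O(|x|^{-3})$ remainder of $V$), and the leading coefficient is pinned down by the scale-free equation $m'=-m^2/(2\pi r)$. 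The main obstacle is the second-order Newton estimate: one must verify that \emph{every} contribution except the far-tail term is genuinely $O(|x|^{-1}(\log|x|)^{-2})$ rather than $O((|x|\log|x|)^{-1})$, a borderline count which is available precisely because of the rapid decay $\rho\lesssim|y|^{-2}(\log|y|)^{-2}$ supplied by Proposition~\ref{P-decay}; equivalently, this is one further term in the expansion behind the Asymptotic Newton's theorem, Lemma~\ref{lemma-Newton}.
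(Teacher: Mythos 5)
Your proof is correct and arrives at the same Abel--type ODE as the paper's, but it realizes the key second--order step by a different, more elementary computation. The paper, following Katsnelson, first reduces the radial equation to a one--dimensional convolution (via the complete elliptic integral $K$), sets $F(t)=e^t u(e^t)$ and $G(t)=\int_t^\infty F^2(t')\,\ud t'$, and then estimates the remainder convolution $\int\phi(t-t')F^2(t')\,\ud t'$ using the integrability and exponential decay of the explicit kernel $\phi$. You instead keep the problem on $\R^2$ and obtain the expansion $u(x)=\tfrac{m(2|x|)}{2\pi|x|}+O\bigl(|x|^{-1}(\log|x|)^{-2}\bigr)$ by a direct three--region splitting of the Newton potential, using the vanishing of the dipole moment by radial symmetry and the crude decay of $\rho$ already supplied by Propositions~\ref{P-decay} and~\ref{P-Z}; this is in effect one further term in the expansion behind Lemma~\ref{lemma-Newton}. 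Since $G(t)=m(e^t)/(2\pi)$, the two routes deliver the same differential relation, and both then integrate the Abel equation $m'(r)\simeq -m(r)^2/(2\pi r)$ to conclude. Your version avoids the elliptic--integral and Heaviside--kernel bookkeeping, so it is more self--contained and arguably more transparent; the paper's computation yields the slightly sharper remainder $F(t)=1/(t+O(\log t))$ rather than your $1/(t(1+o(1)))$, though you would recover this too by tracking the $O\bigl((\log r)^{-1}\bigr)$ factors explicitly instead of collapsing them to $o(1)$. Both, of course, prove the asserted $\simeq$.
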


\proof We start by noting that Proposition \ref{P-decay} applies to
$V_{Z,d}$, see \eqref{eq:VZdhalf}.  To prove the sharp asymptotic
decay of the minimizer when $V = V_{Z,d}$, we use the idea in the
computation of Katsnelson \cite{Katsnelson:06}, also giving the latter
a precise mathematical meaning. To this end, we first note that since
$\rho_V \in L^1(\R^2) \cap L^\infty(\R^2)$, we have that
\eqref{eq:EL-weak0} holds. In terms of $u_V > 0$ defined in
\eqref{e-Phiu0intr}, this equation reads
\begin{align}
	\label{eq:ELu0}
	u_V(x) = V(x) - {1 \over 2 \pi} \int_{\R^2} {u_V^2(y) \over
		|x - y|} \ud^2 y \qquad \text{for all } x \in \R^2,
\end{align}
where we used the regularity of $u_V$ and $V$.  In turn, since
$u_V(x) = u(|x|)$, applying Fubini's theorem we obtain after an
explicit integration:
\begin{align}
	\label{eq:uRiesz}
	u(r) & = {Z \over 2 \pi \sqrt{d^2 + r^2}} - {1 \over 2 \pi}
	\int_0^\infty \int_0^{2 \pi} {u^2(r')  \, r' \ud r' \ud \theta
		\over \sqrt{r^2 + {r'}^2 - 2 r r' \cos \theta}} \\
	& =   {Z \over 2 \pi \sqrt{d^2 + r^2}}  - {2 \over \pi}
	\int_0^\infty {r' u^2(r')
		\over r + r'} K \left( { 2 \sqrt{r r'} \over r + r'} \right) \ud r',
\end{align}
where $K(k)$ is the complete elliptic integral of the first kind \cite{Abramowitz}.

Proceeding as in \cite{Katsnelson:06}, we introduce a smooth bounded
function
\begin{align}
	\label{eq:Fu}
	F(t) := e^t u(e^t), \qquad t \in \R,
\end{align}
which satisfies $F(\ln r) = r u(r)$. From \eqref{eV-upperLog} and the
boundedness of $u$ we conclude that
\begin{align}
	\label{eq:Fut}
	F(t) \sim t^{-1}\quad\text{as $t \to+\infty$},
\end{align}
and $F(t)$ decays exponentially as $t\to-\infty$. In particular,
$F\in L^2(\R)$.  Then with the substitution $r = e^t$,
\eqref{eq:uRiesz} written in terms of $F(t)$ becomes
\begin{align}
	\label{eq:Ft}
	F(t) = {Z \over 2 \pi \sqrt{1 + d^2 e^{-2 t}}} - {2 \over \pi}
	\int_{-\infty}^\infty {F^2(t') \over 1 + e^{t' - t}} \, K \left( {1
		\over \cosh {t' - t \over 2} } \right) \ud t'. 
\end{align}
We further introduce (with the opposite sign convention to that in \cite{Katsnelson:06})
\begin{align}
	\label{eq:phi}
	\phi(t) :=  {2 K \left( {1 \over \cosh {t \over 2} } \right) \over
		\pi (1 + e^{-t})} - \theta(t),
\end{align}
where $\theta(t)$ is the Heaviside step function, and note that
$\phi(t)$ is a positive, exponentially decaying function as
$t \to \pm \infty$, which is smooth, except for a logarithmic
singularity at $t = 0$. Then, since $F(t) \to 0$ as
  $t \to +\infty$ by \eqref{eq:Fut}, passing to the limit using the
  weak convergence of $\phi(t-\cdot)\rightharpoonup 0$ in $L^2(\R)$ as
  $t\to+\infty$ and monotone convergence, we conclude that
\begin{align}
\int_{-\infty}^{+\infty} F^2(t) dt = \frac{Z}{2 \pi}.
\end{align}
With this \eqref{eq:Ft} becomes
\begin{align}
	\label{eq:Ft2}
	F(t) = {Z \left( 1 - \sqrt{1 + d^2 e^{-2 t}} \right) \over 2 \pi
		\sqrt{1 + d^2 e^{-2 t}}}  + \int_t^\infty F^2(t') \ud t' -
	\int_{-\infty}^{\infty} \phi(t - t') F^2(t') \ud t'. 
\end{align}

To conclude, we observe that in view of \eqref{eq:Fut} we can estimate
the last term in \eqref{eq:Ft2} to be $O(t^{-2})$ as $t \to
+\infty$. Similarly, the first term gives an exponentially small
contribution for $t \to +\infty$ and can, therefore, be absorbed into
the $O(t^{-2})$ term as well. Thus we have
\begin{align}
	\label{eq:Gt}
	F(t) = G(t) + O(t^{-2}), \qquad G(t) := \int_t^\infty F^2(t') dt',
\end{align}
and it follows that $G(t)$ satisfies for all $t$ sufficiently large
\begin{align}
	\label{eq:Gt2}
	{d G(t) \over dt} = -\left( G(t) + O(t^{-2}) \right)^2. 
\end{align}
In particular, using \eqref{eq:Fut} we can further estimate for
$t \gg 1$:
\begin{align}
	\label{eq:Gt3}
	{d G(t) \over dt} = -G^2(t) \left( 1 + O(t^{-1}) \right)^2. 
\end{align}
Integrating this expression from some sufficiently large $t_0$ then
gives
\begin{align}
	\label{eq:Gt4}
	{1 \over G(t)} - {1 \over G(t_0)} = t - t_0 + O(\ln (t / t_0)),
	\qquad t > t_0. 
\end{align}
Finally, solving for $G(t)$ and inserting it into \eqref{eq:Gt}
results in
\begin{align}
	\label{eq:Gt5}
	F(t) = {1 \over t + O(\ln t)} \qquad \text{as} \ t \to +\infty,
\end{align}
which yields the claim after converting back into the original variables.
\qed

  \subsection{Proof of the main results.}
    
  We finish this section by concluding the proofs of the results in
  section \ref{sec:results}.

  \begin{proof}[Proof of Theorem \ref{thm:Egeneral}]
    The statement of the theorem follows by combining the statements
    of Propositions \ref{p-Min0}, Lemma \ref{lem:regularity} and
    Proposition \ref{pPDEplus}.
  \end{proof}

  \begin{proof}[Proof of Theorem \ref{thm-main}]
    The conclusion of this theorem is the consequence of Theorem
    \ref{thm:Egeneral}, together with Corollary \ref{c-monotone} and
    Propositions \ref{P-decay}, \ref{P-Z} and \ref{P-univers}, taking
    into account \eqref{eq:VZdhalf} and performing a change of
    variables from $u_V$ to $\rho_V$.
  \end{proof}

 \begin{proof}[Proof of Corollary \ref{cor:cluster}]
   Now that we established Proposition \ref{P-univers} for the
     potential $V_{Z,d}$ with $Z > 0$ and $d > 0$, we may proceed to
     use the comparison principle in Lemma \ref{l-comparison0} to
     establish the sharp estimate in \eqref{e-univers} for a potential
     given by \eqref{eq:VN} with all $Z_i > 0$ and $d_i > 0$. In this
     case by \eqref{eq:VZdhalf} we have
  \begin{align}
    \label{eq:2}
    (-\Delta)^{1/2} V_N(x) = \sum_{i=1}^N {Z_i d_i \over  2 \pi (d_i^2 + |x -
    x_i|^2)^{3/2}}.
  \end{align}
  In particular, $V_N$ satisfies the assumptions of Proposition
  \ref{P-decay}. Hence the conclusions of Propositions \ref{P-decay}
  and \ref{P-Z} are still valid for $V = V_N$.

  It remains to establish the sharp decay estimate in
  \eqref{e-univers}. For that, simply observe that there exist
  constants $Z_2 > Z_1 > 0$ such that
  \begin{align}
    \label{eq:3}
    (-\Delta)^{1/2} V_{Z_1,1}(x) \leq  (-\Delta)^{1/2} V_N(x) \leq (-\Delta)^{1/2}
    V_{Z_2,1}(x) \qquad \forall x \in \R^2. 
  \end{align}
  Therefore, by Lemma \ref{l-comparison0} we have that
  $u_{V_{Z_1,1}} \leq u_{V_N} \leq u_{V_{Z_2,1}}$, and the conclusion
  follows from Proposition \ref{P-univers} and a change of variables
  from $u_{V_N}$ to $\rho_{V_N}$.
\end{proof}

\begin{remark}
  From the proof above it is clear that the universal decay estimate
  \eqref{e-univers} on the minimizer $\rho_V$ remains valid for any
  potential $V\in\HH$ such that $(-\Delta)^{1/2} V$ is nonnegative and
  bounded, and $(-\Delta)^{1/2} V\lesssim |x|^{-3}$ as $|x|\to\infty$.
\end{remark}

\section{Nonzero background charge}
\label{sec:setting}

We now turn to the situation in which a net background charge
density $\bar \rho\in\R$ is present, which is achieved
in graphene via back-gating. This leads to the modified TF-energy
\cite{LMM}
\begin{multline*}
  \E_\br^{TF}(\rho)=\frac{2}{3}\int_{\R^2}\big(|\rho
  (x)|^{3/2}-|\br|^{3/2}\big) \ud^2 x
  -\mathrm{sgn}(\br)|\br|^{1/2}\int_{\R^2}\big(\rho
  (x)-\br\big)\ud^2 x\\
  -\int_{\R^2}(\rho (x)-\br) V(x) \ud^2 x+\frac{1}{4
    \pi}\iint_{\R^2\times\R^2}\frac{(\rho(x)\!-\!\br)(\rho(y)\!-\!\br)}{|x-y|}\ud^2
  x \ud^2 y,
\end{multline*}
where $\rho(x)\to\br$ sufficiently fast as
$|x|\to\infty$.  Since this energy is invariant with respect to
$$\rho\to-\rho,\quad\br\to-\br,\quad V\to-V,$$
in the sequel we assume, without loss of generality, that $\bar \rho>0$.

\subsection{A representation of the energy functional} \label{sec:renormenergy}

For a given charge density $\rho(x)$ and $\br>0$, we
define
\begin{align}\label{u-rho}
\phi :=\rho - \bar \rho.
\end{align}
Then, for $\phi \in C^{\infty}_c(\R^2)$, the energy
$\E^{TF}_\br(\phi)$ can be written as (with a slight abuse of
notation, in what follows we use the same letter to denote both the
energy as a function of $\rho$ and that as a function of $\phi$)
\begin{equation}
\label{eq:E}
\E_\br^{TF}(\phi) =
\int_{\R^2} \Psi_\br(\phi(x)) \ud^2 x
- \int_{\R^2} V(x)\phi(x)\ud^2 x + \frac{1}{4\pi}
\iint_{\R^2 \times \R^2} \frac{\phi(x)\phi(y)}{|x - y|}  \ud^2 x \ud^2 y,
\end{equation}
where
\begin{align}\label{Phi}
  \Psi_\br(\phi):=\tfrac{2}{3}|\br+\phi|^{3/2}-\tfrac{2}{3}\br^{3/2}-\br^{1/2}\phi. 
\end{align}
Clearly $\Psi_\br:\R\to\R$ is a convex $C^1$--function of $\phi$ with
\begin{align}\label{Phi1}
	\Psi^\prime_{\br}(\phi)=|\br+\phi|^{1/2}\sgn(\br+\phi)-\br^{1/2},
\end{align}
and $\Psi_\br\in C^\infty(\R\setminus\{-\br\})$.  The graphs of
$\Psi_\br(\phi)$ and $\Psi^\prime_{\br}(\phi)$ for $\bar \rho = 1$ are
presented in Fig.~\ref{fig1}.

\begin{figure}[h]
	\centering
	\includegraphics[width=2.3in]{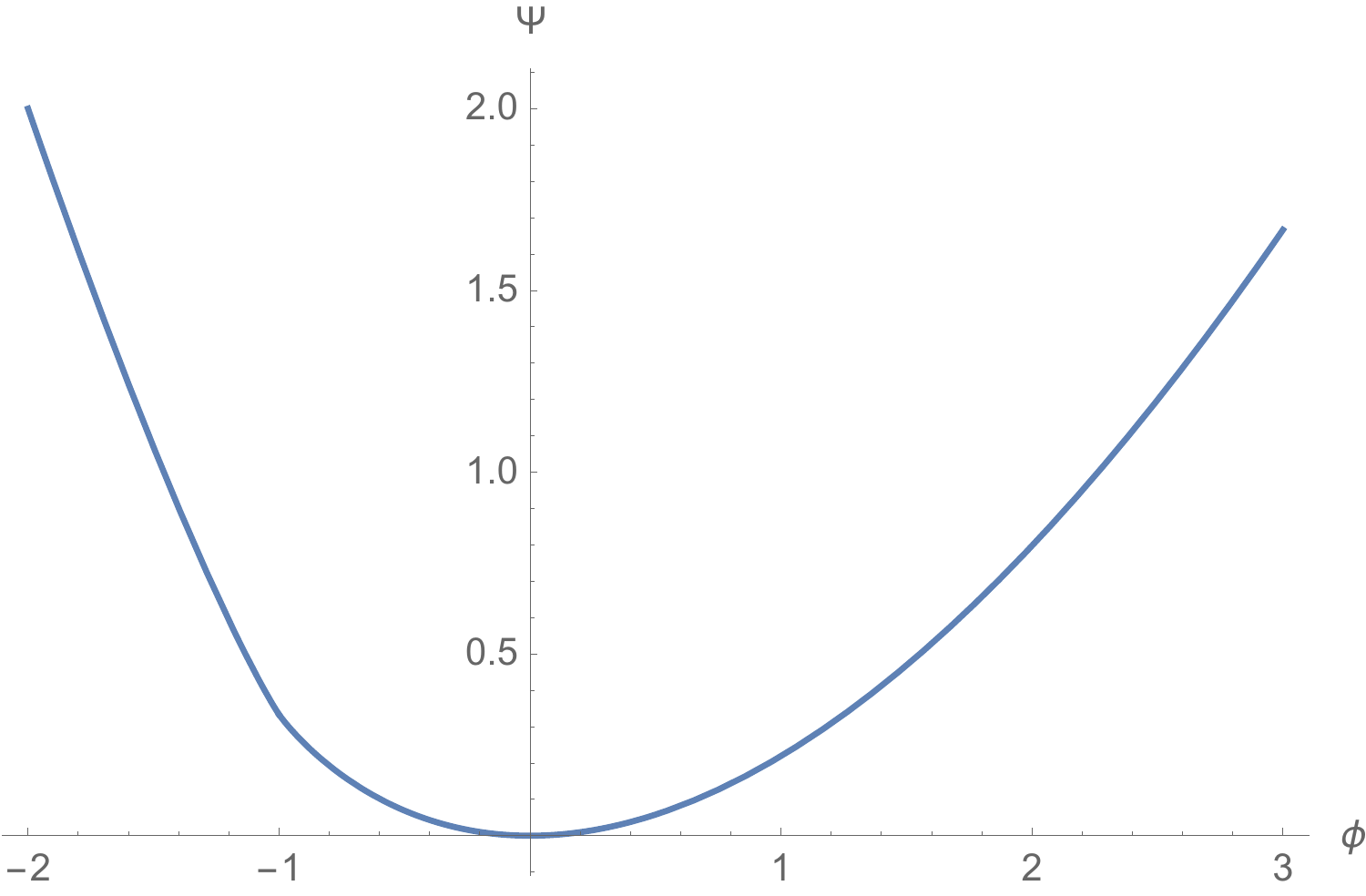}
	\qquad\qquad\qquad
	\includegraphics[width=2.3in]{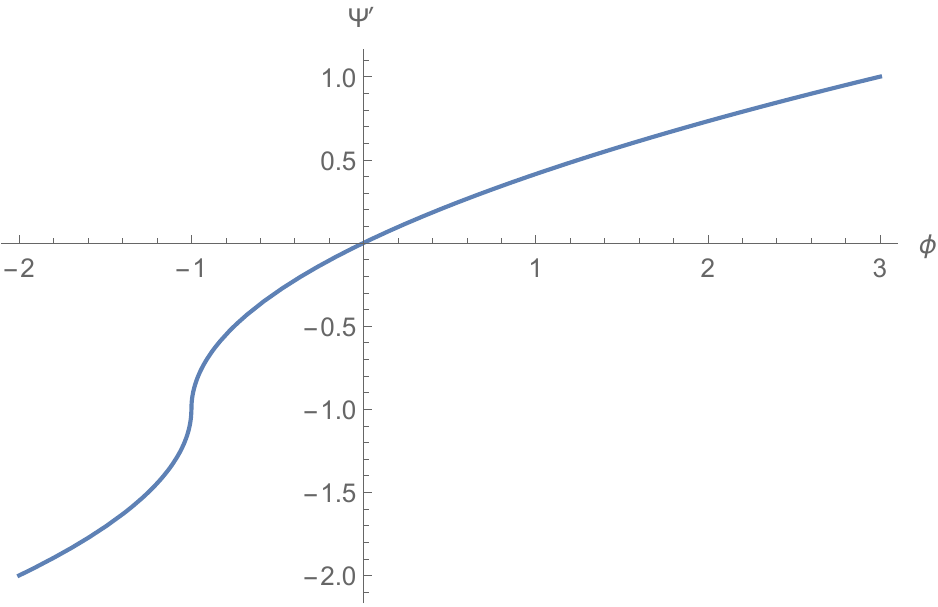}
	\caption{Plot of $\Psi_\br(\phi)$ and $\Psi^\prime_{\br}(\phi)$ for $\br = 1$.}
	\label{fig1}
\end{figure}

Using elementary calculus one can see that
\begin{align}
\label{Psi-abs}
\frac{c|\phi|^2}{\sqrt{\br+|\phi|}}\le\Psi_\br(\phi)\le
\frac{C|\phi|^2}{\sqrt{\br+|\phi|}}\qquad(\phi\in\R),
\end{align}
for some universal $C > c > 0$.  This implies that for $\br>0$,
\begin{align}\label{d-Psi}
  \Big\{\phi\in L^1_{\loc}(\R^2)\,:\,
  \|\Psi_\br(\phi)\|_{L^1(\R^2)}<+\infty\Big\}=L^{3/2}(\R^2)+L^2(\R^2). 
\end{align}

\begin{lemma}\label{l-lsc}
  Let $\br>0$. Then
  $\|\Psi_\br(\cdot)\|_{L^1(\R^2)}:L^{3/2}(\R^2) +L^2(\R^2)\to\R$
  is a strictly convex and weakly lower semi-continuous functional,
  i.e.
\begin{align}\label{e-lsc}
  \langle\phi_n,\varphi\rangle \to
  \langle\phi,\varphi\rangle\quad\forall\varphi\in L^3(\R^2) \cap
  L^2(\R^2)\quad\implies\quad 
  \|\Psi_\br(\phi)\|_{L^1(\R^2)}\le\liminf_n\|\Psi_\br(\phi_n)\|_{L^1(\R^2)}.
\end{align}
\end{lemma}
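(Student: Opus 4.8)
The plan is to treat the two assertions separately: strict convexity by a pointwise computation, and the weak lower semicontinuity \eqref{e-lsc} by the standard convex-minorant argument, whose only non-routine ingredient is the choice of an admissible test function. For strict convexity, I would first note that by \eqref{Phi1} the derivative $\Psi_\br'(t)=|\br+t|^{1/2}\sgn(\br+t)-\br^{1/2}$ is a strictly increasing function of $t\in\R$, being the strictly increasing map $s\mapsto|s|^{1/2}\sgn(s)$ evaluated at $s=\br+t$ and shifted by a constant; hence $\Psi_\br:\R\to\R$ is strictly convex. If $\phi_1\ne\phi_2$ in $L^{3/2}(\R^2)+L^2(\R^2)$, then $E:=\{x:\phi_1(x)\ne\phi_2(x)\}$ has positive Lebesgue measure; applying the strict pointwise convexity inequality for $\Psi_\br$ on $E$ and the non-strict one on $\R^2\setminus E$, and integrating (all integrals involved being finite on the domain by \eqref{d-Psi}), yields the strict convexity of $\phi\mapsto\|\Psi_\br(\phi)\|_{L^1(\R^2)}$.

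The key preliminary observation for \eqref{e-lsc} is that, for every $\phi\in L^{3/2}(\R^2)+L^2(\R^2)$ (equivalently $\Psi_\br(\phi)\in L^1(\R^2)$ by \eqref{d-Psi}), one has $\Psi_\br'(\phi)\in L^3(\R^2)\cap L^2(\R^2)$, i.e. $\Psi_\br'(\phi)$ is an admissible test function in \eqref{e-lsc}. This is a consequence of the pointwise bounds
\begin{equation*}
|\Psi_\br'(t)|^2\le C_\br\,\Psi_\br(t),\qquad |\Psi_\br'(t)|^3\le C_\br\,\Psi_\br(t)\qquad(t\in\R),
\end{equation*}
which follow from \eqref{Psi-abs} and the elementary two-sided estimate $|\Psi_\br'(t)|\sim|t|/\sqrt{\br+|t|}$: dividing its square, resp. cube, by $\Psi_\br(t)\sim|t|^2/\sqrt{\br+|t|}$ leaves a factor $\sim 1/\sqrt{\br+|t|}\le\br^{-1/2}$, resp. $\sim|t|/(\br+|t|)\le 1$.

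With this in hand, \eqref{e-lsc} follows from the convexity and $C^1$-smoothness of $\Psi_\br$: for a.e.\ $x$ one has the affine minorant inequality $\Psi_\br(\phi_n(x))\ge\Psi_\br(\phi(x))+\Psi_\br'(\phi(x))\bigl(\phi_n(x)-\phi(x)\bigr)$. The right-hand side lies in $L^1(\R^2)$, since $\Psi_\br(\phi)\in L^1(\R^2)$ and $\Psi_\br'(\phi)\bigl(\phi_n-\phi\bigr)\in L^1(\R^2)$ (the pairing of $\Psi_\br'(\phi)\in L^3\cap L^2$ with $\phi_n-\phi\in L^{3/2}+L^2$), while the left-hand side is nonnegative; integrating therefore gives
\begin{equation*}
\|\Psi_\br(\phi_n)\|_{L^1(\R^2)}\ge\|\Psi_\br(\phi)\|_{L^1(\R^2)}+\int_{\R^2}\Psi_\br'(\phi)\,\bigl(\phi_n-\phi\bigr)\ud^2 x.
\end{equation*}
Taking $\liminf_{n}$ and using the hypothesis of \eqref{e-lsc} with the test function $\varphi=\Psi_\br'(\phi)\in L^3(\R^2)\cap L^2(\R^2)$, the last integral tends to $0$, which proves \eqref{e-lsc}.

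The main obstacle, such as it is, is the verification that $\Psi_\br'(\phi)$ lands in the dual space $L^3(\R^2)\cap L^2(\R^2)$; once this is settled, the rest is the textbook principle that a convex functional admitting an integrable affine minorant at each point of its domain is weakly lower semicontinuous. Alternatively, one could derive \eqref{e-lsc} from Mazur's lemma, since the sublevel sets of $\|\Psi_\br(\cdot)\|_{L^1(\R^2)}$ are convex and, by Fatou's lemma applied along an almost-everywhere convergent subsequence, strongly closed, hence weakly closed; but the minorant argument is more direct and avoids passing to subsequences.
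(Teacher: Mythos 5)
Your proof is correct, but it takes a genuinely different route from the paper's. The paper argues indirectly: it shows the sublevel sets of $\|\Psi_\br(\cdot)\|_{L^1(\R^2)}$ are strongly closed in $L^{3/2}(\R^2)+L^2(\R^2)$ by passing to an a.e.\ convergent subsequence of a carefully chosen decomposition and invoking Fatou's lemma, and then upgrades strong closedness to weak closedness via Mazur's theorem and the convexity of the functional. You instead give a direct convex-minorant argument: for each $\phi$ in the domain you integrate the pointwise supporting-hyperplane inequality $\Psi_\br(\phi_n)\ge \Psi_\br(\phi)+\Psi_\br'(\phi)(\phi_n-\phi)$ and let the weak-convergence hypothesis kill the linear remainder. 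The one non-routine point in your approach — verifying that $\Psi_\br'(\phi)$ lands in the dual space $L^3(\R^2)\cap L^2(\R^2)$ — is handled correctly via the two-sided bound $|\Psi_\br'(t)|\sim |t|/\sqrt{\br+|t|}$ combined with \eqref{Psi-abs}, which gives $|\Psi_\br'(t)|^2\lesssim\Psi_\br(t)$ and $|\Psi_\br'(t)|^3\lesssim\Psi_\br(t)$. Your argument is somewhat more self-contained (no subsequence extraction, no appeal to Mazur) at the modest cost of the extra estimate on $\Psi_\br'$; the paper's version is shorter because it outsources the work to two standard theorems. You also correctly observe the paper's route as an alternative in your closing remark, so you have in effect both proofs in hand.
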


\proof
The strict convexity of $\|\Psi_\br(\cdot)\|_{L^1(\R^2)}$ follows from the strict convexity of the function $\Psi_\br:\R\to\R$.

Let $(\phi_n)\subset L^{3/2}(\R^2)+L^2(\R^2)$ be a sequence that
converges strongly to $\phi$, i.e. there exist representations
$\phi_n=f_n+g_n$ and $\phi=f+g$ such that
$\|f_n-f\|_{L^{3/2}(\R^2)}\to 0$ and $\|g_n-g\|_{L^2(\R^2)}\to 0$.
Then up to a subsequence $\Psi_\br(\phi_n)\to\Psi_\br(\phi)$
a.e. in $\R^2$. By Fatou's lemma,
\begin{align}
\|\Psi_\br(\phi)\|_{L^1(\R^2)}\le\liminf_n\|\Psi_\br(\phi_n)\|_{L^1(\R^2)},
\end{align}
i.e., the sublevel sets of $\|\Psi_\br(\cdot)\|_{L^1(\R^2)}$ are
closed in the norm of $L^{3/2}(\R^2)+L^2(\R^2)$.  Using the
convexity of $\|\Psi_\br(\phi)\|_{L^1(\R^2)}$, by Mazur's theorem we conclude that all sublevels sets are also weakly
closed in $L^{3/2}(\R^2)+L^2(\R^2)$, i.e. \eqref{e-lsc}
holds. 
\qed

\subsection{Variational setup and the main result.}
In view of Lemma \ref{l-lsc}, the natural domain of the total
TF--energy $\E_\br^{TF}$ is
\begin{align}
\label{eq:H}
\H_\br&:= \HHH\cap (L^{3/2}(\R^2)+L^2 (\R^2)),
\end{align}
and the TF-energy is correctly defined on $\H_\br$ in the form
\begin{align}
\label{eq:Eu}
  \E_\br(\phi) :=
  \int_{\R^2} \Psi_\br(\phi(x)) \ud^2 x
  - \langle \phi, V \rangle + \frac{1}{2}\|\phi\|_{\HHH}^2,
\end{align}
where $\langle \cdot, \cdot \rangle$ denotes the duality pairing
  between $\H_\br'$ and $\H_\br$.  Having in mind the definition of
$\H_\br$ in \eqref{eq:H}, we have
\begin{align}\label{V1}
  \H_\br^\prime& = \HH+(L^{3} (\R^2) \cap L^2 (\R^2)).
\end{align}
Our main result concerning minimizers of $\E_\br$ is the following.

\begin{theorem}\label{p-Min}
  Let $\br>0$ and $V\in\H_\br^\prime$.  Then $\E_\br$ admits a unique
  minimizer $\phi_{\br} \in \H_\br$ such that
  $\E_{\br}(\phi_{\br}) = \inf_{\H_\br}\E_\br$.  The minimizer
  $\phi_{\br}$ satisfies the Euler--Lagrange equation
  \begin{align}\label{e-EL}
    \int_{\R^2} \Psi^{\prime}_{\br}(\phi_{\br}(x))\varphi(x)\ud^2 x -
    \langle \varphi, V \rangle + \langle\phi_{\br},
    \varphi\rangle_\HHH=0\qquad\forall\varphi\in \H_\br. 
  \end{align}
\end{theorem}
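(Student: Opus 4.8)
The plan is to follow the template of Proposition~\ref{p-Min0}, replacing $L^{3/2}(\R^2)$ by $L^{3/2}(\R^2)+L^2(\R^2)$, the kinetic term $\tfrac23\int|\rho|^{3/2}$ by $\int_{\R^2}\Psi_\br(\phi)\,\ud^2x$, and invoking Lemma~\ref{l-lsc} wherever weak lower semicontinuity and strict convexity of the kinetic term are needed. First I would establish that $\E_\br$ is bounded below and coercive on $\H_\br$. Writing $V=V_1+V_2$ with $V_1\in\HH$ and $V_2\in L^3(\R^2)\cap L^2(\R^2)$, I bound
\begin{equation*}
|\langle\phi,V\rangle|\le\|\phi\|_{\HHH}\|V_1\|_{\HH}+\|\phi\|_{L^{3/2}(\R^2)+L^2(\R^2)}\,\|V_2\|_{L^3(\R^2)\cap L^2(\R^2)},
\end{equation*}
and use the lower bound in \eqref{Psi-abs} applied to the canonical splitting $\phi=\phi\,\mathbf 1_{\{|\phi|\le1\}}+\phi\,\mathbf 1_{\{|\phi|>1\}}$ to see that $\int_{\R^2}\Psi_\br(\phi)\,\ud^2x$ controls $\|\phi\,\mathbf 1_{\{|\phi|\le1\}}\|_{L^2(\R^2)}^2+\|\phi\,\mathbf 1_{\{|\phi|>1\}}\|_{L^{3/2}(\R^2)}^{3/2}$, a superlinear function of $\|\phi\|_{L^{3/2}(\R^2)+L^2(\R^2)}$. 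Together with the positive term $\tfrac12\|\phi\|_{\HHH}^2$, Young's inequality absorbs the linear term and yields $E_\br:=\inf_{\H_\br}\E_\br>-\infty$, as well as an a priori bound in $\H_\br$ for any minimizing sequence.

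Next I would take a minimizing sequence $(\phi_n)\subset\H_\br$ and, using weak-$*$ compactness of bounded sets in $\HHH$ and in $L^{3/2}(\R^2)+L^2(\R^2)$, pass to a subsequence with $\phi_n\wconv\phi$ tested against $L^3(\R^2)\cap L^2(\R^2)$ and $\phi_n\stackrel{\ast}{\wconv}F$ in $\HHH$. The one delicate point --- exactly as in the case $\br=0$ --- is that a regular distribution in $\HHH\cap L^1_{\loc}(\R^2)$ need not pair with all of $\HH$ by Lebesgue integration (Remark~\ref{r-biject}), so $F=\phi$ is not immediate. I would resolve this by testing both convergences against $\varphi\in C^\infty_c(\R^2)\subset(L^3(\R^2)\cap L^2(\R^2))\cap\HH$, where both pairings are genuine integrals, obtaining $\langle F,\varphi\rangle=\int_{\R^2}\phi\varphi\,\ud^2x$ for all such $\varphi$; since $C^\infty_c(\R^2)$ is dense in $\HH$, this forces $\phi\in\HHH\cap L^1_{\loc}(\R^2)$ to be a regular distribution with $F=\phi$, hence $\phi\in\H_\br$.

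Then lower semicontinuity is routine: $\int\Psi_\br(\phi)\le\liminf_n\int\Psi_\br(\phi_n)$ by Lemma~\ref{l-lsc}, $\|\cdot\|_{\HHH}^2$ is weak-$*$ lower semicontinuous, and $\langle\phi_n,V\rangle\to\langle\phi,V\rangle$ because $\langle\cdot,V_1\rangle$ is weak-$*$ continuous on $\HHH$ while $\int\phi_nV_2\to\int\phi V_2$ by weak convergence in $L^{3/2}(\R^2)+L^2(\R^2)$; so $\E_\br(\phi)\le E_\br$ and $\phi=:\phi_\br$ is a minimizer. Uniqueness follows from strict convexity of $\E_\br$: strict convexity of $\int\Psi_\br(\cdot)$ (Lemma~\ref{l-lsc}), positive-definiteness of $\tfrac12\|\cdot\|_{\HHH}^2$, and linearity of $\langle\cdot,V\rangle$.

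Finally, for the Euler--Lagrange equation \eqref{e-EL} I would differentiate $t\mapsto\E_\br(\phi_\br+t\varphi)$ at $t=0$ for $\varphi\in\H_\br$: the quadratic and linear terms contribute $\langle\phi_\br,\varphi\rangle_\HHH$ and $-\langle\varphi,V\rangle$, and differentiation under the integral sign in the kinetic term --- justified by a standard domination argument using $|\Psi'_\br(s)|\lesssim|s|$ for $|s|\le1$ and $|\Psi'_\br(s)|\lesssim|s|^{1/2}$ for $|s|\ge1$, which makes $\Psi'_\br(\phi_\br)$ act as a bounded linear functional on $\H_\br$ --- gives $\int_{\R^2}\Psi'_\br(\phi_\br)\varphi\,\ud^2x$; setting the total derivative to zero yields \eqref{e-EL}. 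The main obstacle is the identification $F=\phi$ in the second step; everything else is a straightforward adaptation of Proposition~\ref{p-Min0} and routine enough to be abbreviated in the write-up.
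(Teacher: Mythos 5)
Your proposal is correct and takes the same approach as the paper: the authors state explicitly that the proof of Theorem~\ref{p-Min} consists of small modifications of the argument in Proposition~\ref{p-Min0} using Lemma~\ref{l-lsc}, and your write-up simply fleshes out those modifications (the $L^{3/2}+L^2$ decomposition and coercivity via \eqref{Psi-abs}, the identification of the two weak limits by testing on $C^\infty_c$, lower semicontinuity via Lemma~\ref{l-lsc}, strict convexity for uniqueness, and differentiation for the Euler--Lagrange equation, with the paper citing \cite{LMM}*{Lemma 6.2} where you give the domination bound on $\Psi'_\br$ directly).
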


\proof
The proofs of the existence and uniqueness of the minimizer (employing Lemma \ref{l-lsc}), as well as
the derivation of the Euler--Lagrange equations \eqref{e-EL} are small modifications of
the arguments in the proof of Proposition \ref{p-Min0}, so we omit the
details. For the differentiability of the map $\Psi_{\br}$ see
\cite{LMM}*{Lemma 6.2}.
\qed

\begin{remark}
	If, for instance, $\phi_{\br}\in \H_\br\cap L^{4/3}(\R^2)$ then
	\eqref{e-EL} can be interpreted pointwise as
	\begin{align}
		\label{eq:EL-weak}
		\Psi^\prime_{\br}(\phi_{\br}(x))+\frac{1}{2\pi}\int_{\R^2}\frac{\phi_{\br}(y)}{|x-y|}\ud^2
		y=V(x)\quad\text{a.e.~in $\R^2$}. 
	\end{align}
	However in general, the Euler--Lagrange equation for $\E_\br$ should
	be understood as
	\begin{align}
		\label{eq:EL-weak+}
		\Psi^\prime_{\br}(\phi_{\br})+ U_{\phi_{\br}}=V\quad\text{in
			$\mathcal D^\prime(\R^2)$}, 
	\end{align}
	where
	$\phi_\br\in \mathcal H_\br\subset L^2(\R^2)+L^{3/2}(\R^2)$ and
	$U_{\phi_{\br}}\in \HH$ is the potential of $\phi_{\br}$ defined
	via \eqref{weak-potential}. 
\end{remark}

In the rest of the section, under some additional assumptions on $V$
we will use the equivalent half--Laplacian representation of
\eqref{e-EL} to establish further regularity and decay properties of
the minimizer $\phi_\br$ when $\br>0$.  Our crucial observation is
that unlike in the case $\br=0$, for $\br>0$ the minimizer $\phi_\br$
has the same fast polynomial decay as the Green function of
$(-\Delta)^{1/2}+1$ in $\R^2$, for all reasonably fast decaying
potentials $V$.

\begin{theorem}
  \label{p-Minreg}
  Let $\br>0$, $V\in\HH$ and $\phi_\br$ be the minimizer of
  $\mathcal E_\br$ from Theorem \ref{p-Min}.
\begin{enumerate}
\item[$(i)$] If $(-\Delta)^{1/2}V\in L^\infty(\R^2)$ then
  $\phi_\br\in H^{1/2}(\R^2)\cap C^{1/2}(\R^2)$.
\item[$(ii)$] If additionally, $(-\Delta)^{1/2}V\geq 0$, $V \neq 0$, and for some 
  $C>0$ we have
  \begin{equation}
    (-\Delta)^{1/2}V\le \frac{C}{(1+|x|^2)^{3/2}}\quad\text{in $\R^2$},
  \end{equation}
  then $\phi_\br>0$ in $\R^2$ and
  \begin{equation}
    \phi_\br(x)\sim\frac{1}{|x|^{3}}\quad\text{as }|x|\to\infty.
  \end{equation}
  In particular, $\phi_\br\in L^1(\R^2)$.
\end{enumerate}
\end{theorem}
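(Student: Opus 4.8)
The plan is to mirror the structure of Section~\ref{s:pos} (the case $\br=0$) with the crucial simplification that the linearized operator is now $(-\Delta)^{1/2}+c$ for some $c>0$, whose Green function in $\R^2$ decays like $|x|^{-3}$. First I would carry out the substitution that reduces \eqref{e-EL} to a fractional semilinear PDE: since $\Psi'_\br$ is a $C^1$ strictly increasing bijection of $\R$ with $\Psi'_\br(0)=0$, set $w_\br:=\Psi'_\br(\phi_\br)$ and invert to write $\phi_\br=(\Psi'_\br)^{-1}(w_\br)=:g(w_\br)$. Testing \eqref{e-EL} with $\varphi=(-\Delta)^{1/2}\psi$ for $\psi\in C^\infty_c(\R^2)$ exactly as in the proof of Proposition~\ref{pPDE0} (using \eqref{Riesz-dual}), one obtains
\begin{align*}
(-\Delta)^{1/2} w_\br + g(w_\br) = (-\Delta)^{1/2} V \qquad\text{in }\HH,
\end{align*}
with $w_\br\in\HH$, and uniqueness follows from a comparison principle for this equation proved verbatim as Lemma~\ref{l-comparison0}, using that $g$ is monotone increasing. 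Note $g$ behaves linearly near $0$: from \eqref{Phi1}, $\Psi'_\br(\phi)\approx \tfrac{1}{2\sqrt{\br}}\phi$ for small $\phi$, so $g(s)\approx 2\sqrt{\br}\,s$ near $s=0$, while $g$ grows like $s^2$ for large $s$; in particular $c_1 s\le g(s)\le c_1 s + c_2 s^2$ for $s\ge 0$ with $c_1=2\sqrt\br>0$.

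For part $(i)$: starting from \eqref{eq:EL-weak+}, write $w_\br=V-U_{\phi_\br}$ in $\D'(\R^2)$. Since $\phi_\br\in L^{3/2}(\R^2)+L^2(\R^2)$, the HLS inequality \eqref{HLS} gives $U_{\phi_\br}\in L^6(\R^2)+L^\infty$-type bounds — more carefully, $L^{3/2}\mapsto L^6$ and $L^2\mapsto \mathrm{BMO}$-borderline, so I would instead use that $(-\Delta)^{1/2}V\in L^\infty$ and bootstrap on $w_\br$ directly: $(-\Delta)^{1/2}w_\br=(-\Delta)^{1/2}V-g(w_\br)$, and since $w_\br\in\HH\subset L^4(\R^2)$ we have $g(w_\br)\in L^2(\R^2)$ (from $|g(s)|\lesssim |s|+s^2$), hence $w_\br\in H^{1/2}(\R^2)$ and by Sobolev $w_\br\in L^p$ for all $p<\infty$; iterating, $g(w_\br)\in L^p$ for all $p<\infty$, so $(-\Delta)^{1/2}w_\br\in L^\infty_{\loc}\cap L^p$, giving $w_\br\in C^{1/2}(\R^2)$ by the Hölder estimates on Riesz potentials (\cite{Silvestre}*{Proposition 2.8} / \cite{duPlessis}*{Theorem~2}), and $w_\br\to 0$ at infinity. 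Then $\phi_\br=g(w_\br)\in C^{1/2}(\R^2)$ since $g$ is locally Lipschitz, and $\phi_\br\in H^{1/2}(\R^2)$ follows since $\phi_\br$ is bounded and $|\phi_\br|\lesssim |w_\br|$ near infinity while locally both are $H^{1/2}$. (If $g$ is only $C^1$ and not globally Lipschitz, boundedness of $w_\br$ rescues this.)

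For part $(ii)$: positivity of $\phi_\br$ follows exactly as in Proposition~\ref{pPDEplus} and the Kato-type argument at the start of the proof of Proposition~\ref{P-decay} — testing with $w_\br^-$ and using $g(w_\br)w_\br^-\le 0$ forces $w_\br^-=0$, then $\phi_\br=g(w_\br)\ge 0$, and strict positivity from the equation $((-\Delta)^{1/2}+C)w_\br\ge 0$ with $w_\br\not\equiv 0$. For the decay, the barrier is now the Green function $\Gamma$ of $(-\Delta)^{1/2}+c_1$ in $\R^2$, which satisfies $\Gamma(x)\sim |x|^{-3}$ as $|x|\to\infty$ (this is classical — e.g. from the subordination formula, or \cite{FV}: $(-\Delta)^{1/2}V_{1,1}=\sigma V_{1,1}^3$ already exhibits a $|x|^{-3}$ profile balancing $(-\Delta)^{1/2}$ against a quadratic term at the Serrin-supercritical exponent $p=2<3$). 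The hypothesis $(-\Delta)^{1/2}V\le C(1+|x|^2)^{-3/2}$ is precisely the decay of $|x|^{-3}$; since here the linear term $c_1 w_\br$ dominates the quadratic term $g(w_\br)-c_1 w_\br=O(w_\br^2)$ at infinity (no balancing, unlike the $\br=0$ case), the solution decays at the linear rate $|x|^{-3}$. Concretely I would take $U_\mu(x):=\mu(1+|x|^2)^{-3/2}$ and verify, using the explicit computation of $(-\Delta)^{1/2}(1+|x|^2)^{-3/2}$ from \cite{FV}, that for $\mu$ large $U_\mu$ is a supersolution and for $\mu$ small a subsolution of the equation for $w_\br$ outside a large ball, then apply Lemma~\ref{l-comparison0} with $\Omega=B_R^c$ after matching on $\partial B_R$ (using $\min_{\bar B_R} w_\br>0$ and $\max_{\bar B_R}w_\br<\infty$ from part $(i)$). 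This gives $w_\br\sim |x|^{-3}$, hence $\phi_\br=g(w_\br)\sim c_1 w_\br\sim |x|^{-3}$; integrability $\phi_\br\in L^1(\R^2)$ is then immediate since $|x|^{-3}\in L^1(\R^2)$. The main obstacle I anticipate is the barrier verification: one needs that $(-\Delta)^{1/2}\big((1+|x|^2)^{-3/2}\big)$ is itself $\sim c\,(1+|x|^2)^{-3/2}$ with the \emph{correct sign and constant} to absorb both the $-c_1 U_\mu$ term and the error $-O(U_\mu^2)=O(|x|^{-6})$; fortunately this is exactly the content of the identity $(-\Delta)^{1/2}V_{1,1}=\sigma V_{1,1}^3$ cited in Remark after Theorem~\ref{thm-main}, so the computation is available, and — in pleasant contrast to the delicate $\log$-barrier of Lemma~\ref{l-Barrier} — no borderline cancellation occurs here.
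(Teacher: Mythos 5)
Your overall plan follows the paper closely: substitute $w_\br=\Psi'_\br(\phi_\br)$ to reduce \eqref{e-EL} to a fractional PDE with inverse nonlinearity, establish comparison, bootstrap regularity, get positivity and a lower $|x|^{-3}$ bound from $\big((-\Delta)^{1/2}+c\big)w_\br\ge 0$, and cap $w_\br$ by a dipole-type barrier. The paper does exactly this in Propositions~\ref{pPDE} and~\ref{p-deacy-bar}.

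However, there is a genuine gap in your upper-barrier step, and it stems from a sign error. You assert that $(-\Delta)^{1/2}\big((1+|x|^2)^{-3/2}\big)\sim c\,(1+|x|^2)^{-3/2}$ ``with the correct sign'' and invoke the identity $(-\Delta)^{1/2}V_{1,1}=\sigma V_{1,1}^3$. But $V_{1,1}\propto(1+|x|^2)^{-1/2}$, not $(1+|x|^2)^{-3/2}$, so that identity is irrelevant here. The relevant computation appears in the paper (Remark after Proposition~\ref{P-dipole}): $(-\Delta)^{1/2}\big(Z/(2\pi(1+|x|^2)^{3/2})\big)=Z(2-|x|^2)/\big(2\pi(1+|x|^2)^{5/2}\big)$, which is \emph{negative} for $|x|>\sqrt2$ and behaves as $\simeq -|x|^{-3}$ at infinity. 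Consequently, for your undilated barrier $U_\mu=\mu(1+|x|^2)^{-3/2}$ the supersolution requirement at infinity reads
\begin{equation*}
(-\Delta)^{1/2}U_\mu+2\br^{1/2}U_\mu+O(U_\mu^2)\;\ge\;(-\Delta)^{1/2}V
\end{equation*}
with leading term $\big(2\br^{1/2}-1\big)\mu|x|^{-3}$. This is favorable only when $\br^{1/2}>1/2$; for small $\br$ the coefficient is negative and no choice of $\mu$ can make $U_\mu$ a supersolution, so the comparison argument breaks down. The fix used in the paper is to introduce a \emph{dilation} parameter, taking $W_{Z,\lambda}(|x|)=Z/\big(2\pi(1+|\lambda x|^2)^{3/2}\big)$ and choosing $\lambda$ adapted to $\br$ (so that the positive contribution $2\br^{1/2}W_{Z,\lambda}$ dominates the negative half-Laplacian of $W_{Z,\lambda}$ at infinity) and then $Z$ large. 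Your proposal omits the dilation entirely, which is precisely what allows the argument to work for all $\br>0$.

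Two smaller points. For the lower bound you propose a matching subsolution; the paper instead derives $w_\br\gtrsim|x|^{-3}$ directly from $\big((-\Delta)^{1/2}+2\br^{1/2}+c\big)w_\br\ge 0$ and the $|x|^{-3}$ decay of the corresponding Green function (cf.\ \cite{LMM}*{Lemma 7.1}), which is cleaner and avoids the issue that the hypothesis gives only an upper bound on $(-\Delta)^{1/2}V$. Also, in your part $(i)$ bootstrap, ``$L^2\mapsto\mathrm{BMO}$-borderline'' is a red flag you rightly flag yourself; the paper's route via $\big((-\Delta)^{1/2}+2\br^{1/2}+c\big)w_\br\ge 0$ together with $w_\br\in L^\infty$ (from \eqref{ELu-bounds}) sidesteps that.
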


In the rest of this section we are going to sketch the proof of
Theorem \ref{p-Minreg}. We only emphasize the difference in the
asymptotic behaviour, other arguments that are similar to the case
$\br=0$ will be omitted.

\subsection{Half--Laplacian representation, regularity and decay}

Let $\br> 0$ and $\phi_{\br}\in\H_\br$ be the minimizer of
$\E_\br$. Introduce the substitution
	\begin{align}\label{e-Phiu}
          u_{\br} := \Psi^\prime_{\br}(\phi_{\br}).
	\end{align}
	Then \eqref{e-EL} transforms into
	\begin{align}\label{e-EL-0}
          \int_{\R^2} u_{\br}(x)\varphi(x)\ud^2 x -
          \langle \varphi, V \rangle + \langle
          U_{S_{\br}(u_{\br})},\varphi\rangle_{\HH}=0\qquad\forall\varphi\in
          \H_\br, 
	\end{align}
	where
	\begin{align}\label{e-PhiS}
          S_{\br}(u):=|\br^{1/2}+u|(\br^{1/2}+u)-\br\qquad(u\in\R) 
	\end{align}
	is the inverse function of $\Psi^\prime_{\br}$, so that
        $S_{\br}(\Psi^\prime_{\br}(\phi))=\phi$, for all
        $\phi\in\R$. The graph of $S_{\br}(u)$ is shown in
          Fig. \ref{fig2}.
	
\begin{figure}[h]
	\centering
	\includegraphics[width=3in]{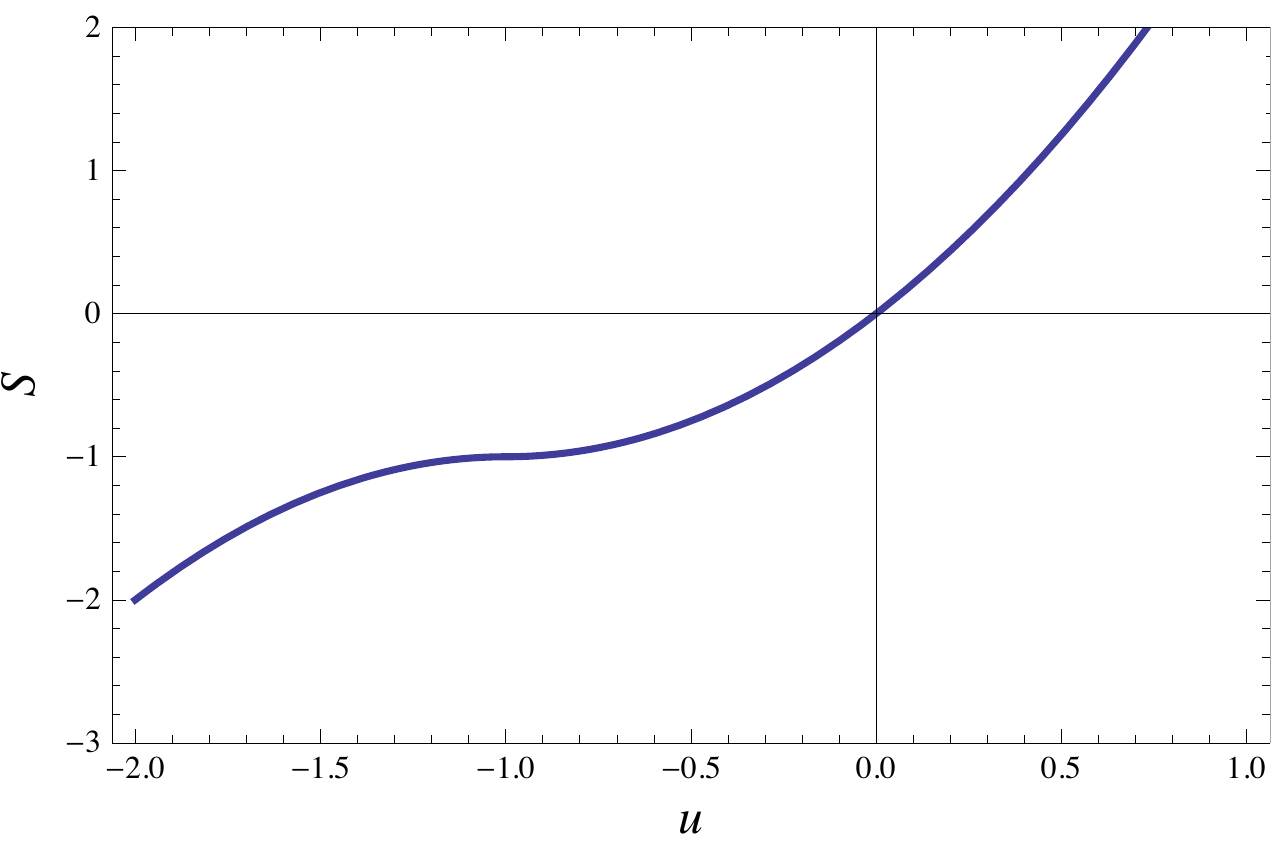}
	\caption{Plot of $S_{\br}(u)$ for $\br = 1$.}
	\label{fig2}
\end{figure}
	
\begin{proposition}[Equivalent PDE]\label{pPDE}
  Let $\br>0$, $V\in\HH$ and $u_\br$ be defined by
  \eqref{e-Phiu}. Then $u_\br\in\HH$ and $u_\br$ is the unique
  solution of the equation
  \begin{align}\label{ELu}
    (-\Delta)^{1/2} u + S_{\br}(u) = (-\Delta)^{1/2} V \quad  \text{in $\HH$}.
  \end{align}
  Moreover,
  \begin{align}\label{ELu-bounds}
    -v_-\le u_{\br}\le v_+,
  \end{align}
  where $v_\pm\ge 0$ are solutions of
  $(-\Delta)^{1/2} v_\pm=((-\Delta)^{1/2}V)^\pm$ in $\HH$.
\end{proposition}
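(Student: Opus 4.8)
The plan is to transcribe the proof of Proposition \ref{pPDE0}, replacing the nonlinearity $|u|u$ by $S_\br(u)$ and keeping track of the larger space $\H_\br$ and its dual. First I would test the weak Euler--Lagrange identity \eqref{e-EL-0} with $\varphi=(-\Delta)^{1/2}\psi$ for an arbitrary $\psi\in C^\infty_c(\R^2)$; this is admissible, since $(-\Delta)^{1/2}\psi\in C^\infty(\R^2)\cap L^1(\R^2)\subset(L^{3/2}(\R^2)+L^2(\R^2))\cap\HHH=\H_\br$. Exactly as in \eqref{Riesz-inv-0}, using \eqref{Riesz-dual} and the fact that the potential of $(-\Delta)^{1/2}\psi$ is $\psi$ itself, the nonlocal term reduces to $\langle U_{S_\br(u_\br)},(-\Delta)^{1/2}\psi\rangle_\HH=\int_{\R^2}S_\br(u_\br)\,\psi\ud^2 x=\int_{\R^2}\phi_\br\,\psi\ud^2 x$, where I used $S_\br(u_\br)=\phi_\br$ (recall that $S_\br$ inverts $\Psi'_\br$). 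Thus \eqref{e-EL-0} becomes the distributional identity $(-\Delta)^{1/2}u_\br+S_\br(u_\br)=(-\Delta)^{1/2}V$ in $\mathcal D'(\R^2)$; since both $(-\Delta)^{1/2}V$ and $S_\br(u_\br)=\phi_\br$ lie in $\HHH$, this forces $(-\Delta)^{1/2}u_\br\in\HHH$, hence $u_\br\in\HH$, and by density of $C^\infty_c(\R^2)$ in $\HH$ the equation \eqref{ELu} holds weakly in $\HH$.

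For uniqueness I would record a comparison principle for \eqref{ELu} by repeating the argument of Lemma \ref{l-comparison0} essentially verbatim: subtract the super- and subsolution inequalities, test the difference with $(v-u)^+\in H^{1/2}_0(\Omega)$, and use $\langle(v-u)^-,(v-u)^+\rangle_\HH\le 0$ together with the monotonicity of $S_\br$ (non-decreasing, being the inverse of the non-decreasing $\Psi'_\br$ from \eqref{Phi1}) to conclude $\|(v-u)^+\|_\HH=0$, i.e. $u\ge v$. Taking $\Omega=\R^2$ shows that \eqref{ELu} has at most one solution in $\HH$, so the function $u_\br$ built from the minimizer via \eqref{e-Phiu} is the unique solution.

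Finally, for the two-sided bound I would check that $v_+$ is a supersolution and $-v_-$ a subsolution of \eqref{ELu}. Since $S_\br(0)=0$ and $S_\br$ is non-decreasing, one has $S_\br(v_+)\ge 0$ when $v_+\ge 0$ and $S_\br(-v_-)\le 0$ when $v_-\ge 0$, whence
\begin{align*}
  (-\Delta)^{1/2}v_++S_\br(v_+)\ge(-\Delta)^{1/2}v_+=\big((-\Delta)^{1/2}V\big)^+\ge(-\Delta)^{1/2}V
\end{align*}
and symmetrically $(-\Delta)^{1/2}(-v_-)+S_\br(-v_-)\le-\big((-\Delta)^{1/2}V\big)^-\le(-\Delta)^{1/2}V$. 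The comparison principle then gives $-v_-\le u_\br\le v_+$ in $\R^2$.

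The step I expect to be the main obstacle is making the comparison principle rigorous in this setting: one must verify that $u_\br$ and the barriers $v_\pm$ lie in a space for which the nonlinear terms $S_\br(\cdot)$ pair in $L^1$ against $(v-u)^+\in H^{1/2}_0(\Omega)$, and that $\big((-\Delta)^{1/2}V\big)^\pm$ genuinely define elements of $\HHH$ so that the potentials $v_\pm\in\HH$ exist — points that become transparent only under the extra boundedness and decay hypotheses on $(-\Delta)^{1/2}V$ imposed where the estimate is actually used (Theorem \ref{p-Minreg}), under which $\big((-\Delta)^{1/2}V\big)^\pm\in L^1(\R^2)\cap L^\infty(\R^2)$ and standard Riesz-potential bounds give both the regularity of $v_\pm$ and their $|x|^{-3}$ decay. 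All the remaining ingredients are routine transcriptions of the $\br=0$ case.
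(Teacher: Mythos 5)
Your proposal is correct and follows essentially the same route as the paper, which itself only sketches the argument by referring back to Propositions \ref{pPDE0}, \ref{pPDEplus} and an extension of the comparison principle of Lemma \ref{l-comparison0} to the monotone nonlinearity $S_\br$. Your fill-in of the details (testing with $\varphi=(-\Delta)^{1/2}\psi$, the reduction of the Coulomb term via $S_\br(u_\br)=\phi_\br$, and the verification that $v_+$ and $-v_-$ are super- and subsolutions via $S_\br(0)=0$ and monotonicity) is exactly what the paper intends, and your closing remark about the implicit hypotheses needed for $((-\Delta)^{1/2}V)^\pm\in\HHH$ is an accurate observation about where the bound is actually applied.
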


\proof Similar to the proof of Propositions \ref{pPDE0} and \ref{pPDEplus}.
The uniqueness of the solution and the bound \eqref{ELu-bounds} follows from an extension of the comparison principle of Lemma \ref{l-comparison0} to the case of a monotone increasing function $S_{\br}(u)$.
\qed

\begin{proposition}\label{p-deacy-bar}
  Let $\br>0$ and $V\in\HH$. Assume that
  $(-\Delta)^{1/2}V\in L^\infty(\R^2)$, $(-\Delta)^{1/2}V\geq 0$ and
  $V\neq 0$. Then $u_\br\in H^{1/2}(\R^2)\cap C^{1/2}(\R^2)$,
  $u_\br>0$ in $\R^2$ and
  \begin{equation} u_\br(x)\gtrsim\frac{1}{|x|^3}\quad\text{as
    }|x|\to\infty.
	\end{equation}
	If, in addition, for some $C>0$,
	\begin{equation}
          (-\Delta)^{1/2}V\le \frac{C}{(1+|x|^2)^{3/2}}\quad\text{in $\R^2$},
	\end{equation}
 then
	\begin{equation}\label{cubic}
	u_\br(x)\sim\frac{1}{|x|^3}\quad\text{as }|x|\to\infty.
	\end{equation}
In particular, $u_\br\in L^1(\R^2)$.
	
\end{proposition}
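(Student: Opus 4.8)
\proof
The plan is to mimic the arguments of Sections~\ref{sec:half} and \ref{sec:decay}, the only genuinely new feature being that for $\br>0$ the nonlinearity $S_\br$ is \emph{asymptotically linear at the origin}: it is non-decreasing, $C^1$, with $S_\br(0)=0$, $S_\br'(0)=2\br^{1/2}>0$, and $S_\br(t)=2\br^{1/2}t+t^2$ for all $t\ge 0$. Accordingly, the role of the logarithmic barrier of Section~\ref{ALog} is now played by (rescalings of) the Green function $G_m$ of the massive half-Laplacian $(-\Delta)^{1/2}+m$ on $\R^2$; what we use about it is that $G_m$ is radial, positive and radially decreasing, belongs to $L^1(\R^2)$, satisfies $G_m(x)\lesssim(1+|x|)^{-3}$ for $|x|\ge 1$, and $G_m(x)\sim c_m|x|^{-3}$ as $|x|\to\infty$ for some $c_m>0$ --- a polynomial rather than exponential decay, caused by the lack of smoothness of the symbol $m+|\xi|$ at $\xi=0$. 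Granting this, regularity and positivity follow as in the case $\br=0$: by the bootstrap of Lemma~\ref{lem:regularity}, using $(-\Delta)^{1/2}V\in L^\infty(\R^2)$ and the bounds \eqref{ELu-bounds} (which here read $0\le u_\br\le V$, with $V\in L^\infty(\R^2)$ since $(-\Delta)^{1/2}V\in L^\infty(\R^2)\cap\HHH$), one gets $u_\br\in L^\infty(\R^2)$, then $S_\br(u_\br)\in L^\infty(\R^2)$ and $(-\Delta)^{1/2}u_\br=(-\Delta)^{1/2}V-S_\br(u_\br)\in L^\infty(\R^2)$, whence $u_\br\in C^{1/2}(\R^2)$ by the H\"older estimates for $(-\Delta)^{1/2}$ (cf.~\cite{Silvestre}); testing \eqref{ELu} with $\varphi=u_\br$ and using $S_\br(t)\ge 2\br^{1/2}t$ for $t\ge 0$ gives in addition $2\br^{1/2}\|u_\br\|_{L^2(\R^2)}^2\le\langle(-\Delta)^{1/2}V,u_\br\rangle<\infty$, so together with $u_\br\in\HH$ (Proposition~\ref{pPDE}) we obtain $u_\br\in H^{1/2}(\R^2)$. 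Positivity is proved as in Proposition~\ref{pPDEplus}: testing \eqref{ELu} with $u_\br^-$ and using that $S_\br$ is non-decreasing with $S_\br(0)=0$ forces $u_\br^-=0$; since $V\neq 0$, $u_\br\not\equiv 0$; and with $c$ a Lipschitz constant of $S_\br$ on $[0,\|u_\br\|_{L^\infty}]$, the inequality $(-\Delta)^{1/2}u_\br+c\,u_\br=(-\Delta)^{1/2}V+\bigl(c\,u_\br-S_\br(u_\br)\bigr)\ge 0$ together with the strong maximum principle for $(-\Delta)^{1/2}+c$ (cf.~\cite{LMM}*{Lemma~7.1}) gives $u_\br>0$ in $\R^2$.

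For the lower bound $u_\br(x)\gtrsim|x|^{-3}$, fix $R>0$, put $\delta:=\min_{\bar B_R}u_\br>0$, pick $L>2\br^{1/2}$ with $L-2\br^{1/2}\ge\|u_\br\|_{L^\infty}$, and let $w\in\HH\cap L^2(\R^2)\cap L^3(\R^2)$ be the positive bounded solution of $(-\Delta)^{1/2}w+Lw=(-\Delta)^{1/2}V$, that is $w=G_L\ast(-\Delta)^{1/2}V$. Since $(-\Delta)^{1/2}V\ge 0$ and $(-\Delta)^{1/2}V\not\equiv 0$, there is a bounded set $E$ with $\eta:=\int_E(-\Delta)^{1/2}V\,\ud^2 y>0$; as $G_L$ is radially decreasing with $G_L(x)\sim c_L|x|^{-3}$, for $|x|$ large we get $w(x)\ge\eta\,G_L\bigl(\tfrac{3}{2}|x|\bigr)\gtrsim|x|^{-3}$. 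For small $\eps>0$ the function $\eps w$ is a subsolution of \eqref{ELu} in $B_R^c$, since $(-\Delta)^{1/2}(\eps w)+S_\br(\eps w)=\eps(-\Delta)^{1/2}V+\bigl(S_\br(\eps w)-\eps Lw\bigr)\le\eps(-\Delta)^{1/2}V\le(-\Delta)^{1/2}V$, where we used $S_\br(t)\le Lt$ for $0\le t\le\eps\|w\|_{L^\infty}\le L-2\br^{1/2}$ and $(-\Delta)^{1/2}V\ge 0$; moreover $\eps w\le\eps\|w\|_{L^\infty}<\delta\le u_\br$ on $\bar B_R$ for $\eps$ small. The comparison principle --- the monotone-nonlinearity extension of Lemma~\ref{l-comparison0} already used in Proposition~\ref{pPDE} --- applied with $\Omega=B_R^c$ then yields $u_\br\ge\eps w$ in $\R^2$, hence $u_\br(x)\gtrsim|x|^{-3}$.

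For the matching upper bound under $0\le(-\Delta)^{1/2}V\le C(1+|x|^2)^{-3/2}$, the crucial point is that one must linearize $S_\br$ with the \emph{exact} slope $m:=2\br^{1/2}=S_\br'(0)$. Let $W\in\HH\cap L^2(\R^2)\cap L^3(\R^2)$ be the positive bounded solution of $(-\Delta)^{1/2}W+mW=(-\Delta)^{1/2}V$, i.e.\ $W=G_m\ast(-\Delta)^{1/2}V$; splitting this convolution over $\{|y|\le|x|/2\}$, $\{|x-y|\le|x|/2\}$ and the complement, and using $0\le(-\Delta)^{1/2}V\lesssim(1+|x|)^{-3}$ with $G_m\in L^1(\R^2)$ and $G_m\lesssim(1+|x|)^{-3}$ for $|x|\ge 1$ --- exactly as in the proof of Lemma~\ref{lemma-Newton} --- one obtains $W(x)\lesssim(1+|x|)^{-3}$. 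For every $K\ge 1$ the function $KW$ is a supersolution of \eqref{ELu} in $B_R^c$, because $S_\br(t)\ge mt$ for $t\ge 0$ gives $(-\Delta)^{1/2}(KW)+S_\br(KW)\ge K\bigl((-\Delta)^{1/2}W+mW\bigr)=K(-\Delta)^{1/2}V\ge(-\Delta)^{1/2}V$; this step breaks down if $m$ is replaced by any $L>m$, since then the leading $|x|^{-3}$ term of $S_\br(KW)-KLW\sim KW\,(m-L)$ is negative and can no longer be absorbed by $(-\Delta)^{1/2}V$ once the latter decays strictly faster than $|x|^{-3}$. Choosing $K$ so large that $KW\ge\|u_\br\|_{L^\infty}\ge u_\br$ on $\bar B_R$ (possible since $\min_{\bar B_R}W>0$), the comparison principle with $\Omega=B_R^c$ gives $u_\br\le KW$ in $\R^2$, so $u_\br(x)\lesssim|x|^{-3}$; together with the lower bound this is $u_\br(x)\sim|x|^{-3}$, and $u_\br\in L^1(\R^2)$ follows from $u_\br\le KW\lesssim(1+|x|)^{-3}\in L^1(\R^2)$. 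The main obstacle is precisely the sharp two-sided $|x|^{-3}$ decay of the resolvent kernel $G_m$ of $(-\Delta)^{1/2}+m$ on $\R^2$ (standard, but genuinely tied to the non-smoothness of the symbol at the origin) and the ensuing Newton-type asymptotics for $G_m\ast(-\Delta)^{1/2}V$; granting these, the comparison-principle scheme of Sections~\ref{sec:half}--\ref{sec:decay} transfers with only cosmetic changes.
\qed
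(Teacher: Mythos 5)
Your proof is correct and reaches the same conclusions, but the barrier you use for the sharp upper bound is different from the paper's. The paper takes the explicit dipole family $W_{Z,\lambda}(|x|)=\tfrac{Z}{2\pi(1+|\lambda x|^2)^{3/2}}$, for which $(-\Delta)^{1/2}W_{Z,\lambda}$ has a closed form (a rescaling of the formula given in the remark after Proposition~\ref{P-dipole}), and then checks that $\big((-\Delta)^{1/2}+2\br^{1/2}\big)W_{Z,\lambda}+s_\br(W_{Z,\lambda})\ge(-\Delta)^{1/2}V$ by choosing $\lambda=2\br^{1/2}$ so the negative $|x|^{-3}$-tail of $(-\Delta)^{1/2}W_{Z,\lambda}$ is dominated by the mass term $2\br^{1/2}W_{Z,\lambda}$, and then $Z$ large to dominate $V$ near the origin. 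You instead set $W=G_m*(-\Delta)^{1/2}V$ with $m=2\br^{1/2}$, so $W$ solves the linearized equation exactly, and verify the decay $W\lesssim(1+|x|)^{-3}$ by a Newton-type splitting of the convolution. Both hinge on the same fact --- the $|x|^{-3}$ tail of the resolvent kernel of $(-\Delta)^{1/2}+m$ on $\R^2$ --- and indeed both the paper (through the reference to \cite{LMM}*{Lemma~7.1} for the lower bound) and you implicitly invoke it. The paper's explicit barrier is self-contained modulo one identity; your convolution barrier avoids computing $(-\Delta)^{1/2}$ of anything but requires knowing the two-sided asymptotics and $L^1$-integrability of $G_m$, which you correctly flag as standard but external. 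Your approach is somewhat more flexible since it works verbatim for any $V$ with $0\le(-\Delta)^{1/2}V\lesssim(1+|x|)^{-3}$ without any algebra, whereas the paper's dipole barrier happens to be tailored to exactly the assumed upper envelope.

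Two small remarks. First, since $W$ is a supersolution in all of $\R^2$, you can take $K=1$ and apply the comparison principle with $\Omega=\R^2$; the introduction of $K$ and the restriction to $B_R^c$ are harmless but unnecessary. Second, your side remark that the upper bound ``breaks down if $m$ is replaced by any $L>m$'' slightly overstates the constraint: what is really needed is $L\le 2\br^{1/2}$ (so that $S_\br(t)-Lt\ge 0$ for $t\ge 0$), mirroring the paper's choice $\lambda\le 2\br^{1/2}$ in the dipole scaling; taking $L=m$ is simply the natural endpoint. Neither remark affects correctness.
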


\proof 
Represent \eqref{ELu} as
\begin{align}
\label{ELu++}
\big((-\Delta)^{1/2} + 2\br^{1/2}\big) u_{\br} + s_{\br}(u_{\br})
  = (-\Delta)^{1/2} V \quad  \text{in} \ \mathcal D'(\R^2),
\end{align}
where $s_{\br}(t)=S_{\br}(t)-2\br^{1/2} t$ and observe that
$s_{\br}(t)=t^2$ for $|t|<\br^{1/2}$ small, while
$s_{\br}(t)\sim|t|t$ for $t$ large.
In particular, in view of \eqref{ELu-bounds} we have $u_{\br}\ge 0$
and $u_{\br}\in L^\infty(\R^2)$. Then for a sufficiently large $c>0$,
\begin{align}
\label{ELu+++}
\big((-\Delta)^{1/2} + 2\br^{1/2}+c\big) u_{\br}=
  c-s_{\br}(u_{\br})+ (-\Delta)^{1/2} V\ge 0 \quad  \text{in $\R^2$}. 
\end{align} 
This implies $u_{\br}\in H^{1/2}(\R^2)\cap C^{1/2}(\R^2)$,  $u_\br>0$ in $\R^2$ and  additionally,
\begin{equation}
u_\br(x)\gtrsim\frac{1}{|x|^3}\quad\text{as }|x|\to\infty,
\end{equation}
cf. \cite{LMM}*{Lemma 7.1} for a similar argument. 

To derive the upper bound on $u_\br$, consider the dipole type family of barriers
 $$W_{Z,\lambda}(|x|):=\frac{Z}{2\pi(1+|\lambda x|^2)^{3/2}}$$ and note that using \eqref{e-dipole}, scaling, $s_\br(W_{Z,\lambda})\ge 0$ and \eqref{cubic}, we obtain
\begin{multline}
	\label{ELu+++up}
	\big((-\Delta)^{1/2} + 2\br^{1/2}\big) W_{Z,\lambda} +
        s_{\br}(W_{Z,\lambda})- (-\Delta)^{1/2} V
	\\
	\ge\frac{Z\lambda(2-|\lambda x|^2)}{2\pi(1+|\lambda
          x|^2)^{5/2}}+\frac{2 Z\br^{1/2}(1+|\lambda
          x|^2)}{2\pi(1+|\lambda
          x|^2)^{5/2}}-\frac{C}{(1+|x|^2)^{3/2}} \ge 0 \quad \text{in
          $\R^2$} ,
\end{multline} 
provided that we choose $\lambda=2\br^{1/2}$ and $Z\gg 1$ sufficiently large.
Then $u_\br\le W_{Z,2\br^{1/2}}$ in $\R^2$ by an extension of the comparison principle of Lemma \ref{l-comparison0} to the equation \eqref{ELu++}.
\qed

\begin{proof}[Proof of Theorem \ref{p-Minreg}]
Follows from Proposition
\ref{p-deacy-bar} using the explicit representation
$\phi_\br=S_\br(u_\br)=2\br^{1/2} u_\br + u_\br^2$ in \eqref{e-PhiS},
which is valid since $u_\br>0$.
\end{proof}


\paragraph{\bf Data availability statement.} Data sharing not applicable
to this article as no datasets were generated or analysed during the
current study.

\bibliographystyle{plain}
\bibliography{TF-graphene-biblio}

\end{document}